\numberwithin{equation}{section}
\newcommand{\mG}{\mathbf{\G}}
\newcommand{\Det}{\mathrm{Det}}
\newcommand{\che}[1]{\check{#1}}
\newcommand{\tz}{\widetilde{\zeta}}
\def\rpkern{\mathchoice{\kern-1.45em}{\kern-1.11em}{}{}}%
\def\grpkern{\mathchoice{\kern-1.013em}{\kern-0.825em}{}{}}%
\def\rpkern{\mathchoice{\kern-1.44em}{\kern-1.11em}{}{}}%
\def\grpkern{\mathchoice{\kern-1.00em}{\kern-0.81em}{}{}}%
\def\rpkern{\mathchoice{\kern-1.472em}{\kern-1.14em}{}{}}%
\def\grpkern{\mathchoice{\kern-1.00em}{\kern-0.815em}{}{}}%
\def\minibullet{\mathchoice%
{\raise0.2ex\hbox{$\scriptstyle\bullet$}}%
{\raise0.26ex\hbox{$\scriptscriptstyle\bullet$}}{}{}}
\def\butabullet{\mathchoice%
{\raise0.8ex\hbox{$\scriptstyle\bullet$}{\kern-0.365em}%
\lower0.4ex\hbox{$\scriptstyle\bullet$}}%
{\raise0.75ex\hbox{$\scriptscriptstyle\bullet$}{\kern-0.335em}%
\lower0.25ex\hbox{$\scriptscriptstyle\bullet$}}{}{}}
\def\customprod#1#2%
\newcommand{\bC}{\mathbb{C}}
\newcommand{\bN}{\mathbb{N}}
\newcommand{\bQ}{\mathbb{Q}}
\newcommand{\bR}{\mathbb{R}}
\newcommand{\bZ}{\mathbb{Z}}
\newcommand{\cD}{\mathcal{D}}
\newcommand{\cY}{\mathcal{Y}}
\renewcommand{\a}{\alpha}
\renewcommand{\b}{\beta}
\newcommand{\g}{\gamma}
\renewcommand{\d}{\delta}
\newcommand{\z}{\zeta}
\renewcommand{\l}{\lambda}
\renewcommand{\t}{\tau}
\newcommand{\G}{\Gamma}
\renewcommand{\Re}{\mathrm{Re}\,}
\renewcommand{\det}{\mathrm{det}\,}
\newcommand{\Gauss}[1]{\lfloor{#1}\rfloor}
\newcommand{\p}{\partial}
\newcommand{\DS}[1]{\displaystyle{#1}}
\newcommand{\wt}[1]{\widetilde{#1}}
\newcommand{\boldtitle}[1]{\title{\bfseries #1}}
\newenvironment{MSC}{%
\smallbreak
\noindent \textbf{2010\ Mathematics Subject Classification\,:}}
\newenvironment{keywords}{%
\noindent\textbf{Key words and phrases\,:}\itshape}
\theoremstyle{theorem}
\newtheorem*{multitheorem}{\variable@name}
\theoremstyle{definition}
\newcommand{\variable@name}{Theorem}
\newtheorem*{multiproclaim}{\variable@name}
\theoremstyle{plain}
\newtheorem{thm}{Theorem}[section]
\newtheorem{prop}[thm]{Proposition}
\newtheorem{lem}[thm]{Lemma}
\newtheorem{cor}[thm]{Corollary}
\theoremstyle{definition}
\newtheorem{example}[thm]{Example}
\newtheorem{remark}[thm]{Remark}
\author{Yoshinori
 Yamasaki\thanks{Partially supported by Grant-in-Aid for Young Scientists (B) No. 21740019.}} 
\date{\today}
\begin{document}

\setlength{\baselineskip}{15pt}
\maketitle

\begin{abstract}
 We explicitly give factorization formulas for higher depth determinants,
 which are defined via derivatives of the spectral zeta function at non-positive integer points,
 of Laplacians on the $n$-sphere
 in terms of the multiple gamma functions.  
\begin{MSC}
 {\it Primary} 11M36.
\end{MSC} 
\begin{keywords}
 Hurwitz zeta function,
 multiple gamma function,
 determinant of Laplacian.
\end{keywords}
\end{abstract}

\section{Introduction}

 Let $A$ be an operator having only real discrete spectra.
 Let $S(A)$ be the set of all spectra
 and $S^{+}(A)$ the set of all positive spectra of $A$ including multiplicity.
 The aim of the present paper is, for a positive integer $r$,
 to study the ``higher depth determinants'' of $A$ defined by 
\[
 \Det_{r}(A):
=\exp\Bigl(-\frac{\p}{\p w}\z_{A}(w)\Bigl|_{w=1-r}\Bigl).
\]
 Here,
 $\z_{A}(w)$ is the spectral zeta function of $A$ defined by
\begin{equation}
\label{def:speczeta}
 \z_{A}(w):=\sum_{\l\in S^{+}(A)}\frac{1}{\l^w}.
\end{equation}
 To guarantee the well-definedness,
 we assume that $\z_{A}(w)$ converges absolutely in some right half plane,  
 can be continued meromorphically to the region containing $w=1-r$ and 
 is, in particular, holomorphic at the point.
 The higher depth determinant clearly gives a generalization
 of the usual (zeta-) regularized determinant $\det(A)$ of $A$, $\det(A):=\Det_1(A)$.
 It is first introduced and studied in \cite{{KurokawaWakayamaYamasaki}}
 for the case where $A$ is the Laplacian
 on a compact Riemann surface of genus greater than one with negative constant curvature.
 In that paper,
 a generalization of the determinant expression of the Selberg zeta function
 (\cite{DHP,Sa,V}) is obtained.
 See \cite{WakayamaYamasaki} for their arithmetic analogues.
 The origin of such higher depth functions goes back to Milnor's gamma functions;
 in \cite{M}, 
 to construct a function satisfying the Kubert identity,
 which plays an important role in the study of Iwasawa theory, 
 Milnor studied the following ``higher depth gamma function'':
\[
 \mG_r(z):=\exp\Bigl(\frac{\p}{\p w}\z(w,z)\Bigl|_{w=1-r}\Bigl),
\] 
 where $\z(w,z):=\sum^{\infty}_{m=0}(m+z)^{-w}$ is the Hurwitz zeta function.
 Notice that, from the Lerch formula 
 $\exp(\frac{\p}{\p w}\z(w,z)\bigl|_{w=0})=\frac{\G(z)}{\sqrt{2\pi}}$
 where $\G(z)$ is the classical gamma function,
 we have $\mG_{1}(z)=\frac{\G(z)}{\sqrt{2\pi}}$, 
 and hence the Milnor gamma function in fact gives a generalization of the gamma function.
 From this, one can easily see that 
 the higher depth determinants are analogues of the Milnor gamma functions.

 In the present paper,
 we study the higher depth determinants 
 of Laplacians on the unit $n$-sphere
 $S^{n}:=\{(x_1,\ldots,x_{n+1})\in\bR^{n+1}\,|\,x_1^2+\cdots+x_{n+1}^2=1\}$
 with the standard metric.
 Let $\Delta_n$ be the Laplacian on $S^{n}$.
 It is well known that $\z_{\Delta_n}(w)$ converges absolutely for $\Re(w)>\frac{n}{2}$
 and can be continued meromorphically to the whole complex plane $\bC$
 with possible simple poles at
 $w=\frac{n}{2},\frac{n}{2}-1,\ldots,\frac{n}{2}-\Gauss{\frac{n-1}{2}}$,
 where $\Gauss{x}$ is the largest integer not exceeding $x$,
 and is holomorphic at non-positive integer points.
 (These properties have been verified more generally.
 Namely, let $\Delta_M$ be the Laplacian on a compact Riemannian manifold $M$ of dimension $n$.
 Then, the spectral zeta function $\z_{\Delta_M}(w)$ satisfies the properties stated above.
 See, e.g., \cite{{Rosenberg1997}}.)  
 In particular, we can define the higher depth determinant $\Det_r(\Delta_n)$.

 Before stating our results, let us recall the case $r=1$.
 In \cite{Vardi1988} (see also \cite{Kumagai1999}),
 it is shown that the determinant $\det(\Delta_n)$ can be expressed as
\begin{align}
\label{for:vardi}
 \det(\Delta_n)
&=A_n^{B_n}e^{C_n}\prod^{n}_{l=1}\G_l\bigl(\frac{1}{2}\bigr)^{Q_{n,l}}
=\a_n^{\b_n}e^{\g_n}\prod^{n-1}_{k=0}e^{\t_{n,k}\z'(-k)},
\end{align}
 where $\G_l(z)$ is the Barnes multiple gamma function (\cite{Barnes}), 
 $\z(w):=\sum^{\infty}_{m=1}m^{-w}$ is the Riemann zeta function, and 
 $A_n$, $B_n$, $C_n$, $Q_{n,l}$, $\a_n$, $\b_n$, $\g_n$ and $\t_{n,k}$
 are some computable rational numbers.
 Notice that the second equation immediately follows from the fact that 
 $\G_l(\frac{1}{2})$ can be expressed in terms of $\z'(-k)$ for $k=0,1,\ldots,l-1$
 (see \cite[Theorem~1.3]{Vardi1988} or, more explicitly, \eqref{for:G1/2}).

 The Laplacian treated in this paper is, as a generalization of $\Delta_n$, of the following form: 
\[
 L_n(s):
=\Delta_n+\check{n}^2-s^2
=\Delta_n+\frac{n-1}{4n}R_n-s^2,
\]
 where $\check{n}:=\frac{n-1}{2}$, $s\in\bR$ and $R_n=n(n-1)$ is the scalar curvature of $S^n$.
 This form of the Laplacian is also studied in \cite{Dowker1994,Moller2009}.
 Notice that
 $\Delta_n=L_n(\check{n})$ and 
 $Y_n=L_n(\frac{1}{2})$, where $Y_n:=\Delta_n+\frac{n(n-2)}{4}$
 is the conformal Laplacian (or the Yamabe operator) on $S^n$.
 The following is our main result,
 which gives factorization formulas for the higher depth determinants of $L_n(s)$.
 
\begin{thm}
\label{thm:main}
 Let $s\in I_n:=(-\check{n}-1,\check{n}+1)$.
 Put $\xi^{\pm}_n(s):=1+\check{n}\pm s$.
 Then, there exists some even polynomials $f_{n,r}(s)$ and $\b_{n,r}(s,l)$
 with rational number coefficients such that
\begin{equation}
\label{for:DetrB-intro}
 \Det_{r}\bigl(L_n(s)\bigr)
=\bigl(\check{n}^2-s^2\bigr)^{(\check{n}^2-s^2)^{r-1}\d_{|s|<\check{n}}}\cdot e^{f_{n,r}(s)}
\prod^{2r+n-2}_{l=1}
\Bigl(\G_l\bigl(\xi^{+}_n(s)\bigr)\G_l\bigl(\xi^{-}_n(s)\bigr)\Bigr)^{\b_{n,r}(s,l)},
\end{equation}
 where $\d_{|s|<\check{n}}:=1$ if $|s|<\check{n}$ and $0$ otherwise.
 Moreover, $f_{n,r}(s)$ is identically $0$ if $n$ is odd.
\end{thm}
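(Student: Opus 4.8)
The plan is to reduce the spectral zeta function $\z_{L_n(s)}(w)$ to a finite combination of Hurwitz zeta functions and then differentiate term by term at $w=1-r$. First I would recall the explicit eigenvalue data for $\Delta_n$ on $S^n$: the eigenvalues are $k(k+n-1)$ for $k=0,1,2,\dots$ with multiplicity $d_{n,k}=\binom{n+k}{k}-\binom{n+k-2}{k-2}$, which is a polynomial in $k$ of degree $n-1$. Shifting by $\che{n}^2-s^2$ turns the eigenvalue $k(k+n-1)+\che{n}^2-s^2$ into the perfect-square-minus-square form $(k+\che{n})^2-s^2=(k+\xi^{+}_n(s)-1)(k+\xi^{-}_n(s)-1)$, since $\che{n}=\frac{n-1}{2}$. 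Hence $L_n(s)$ has eigenvalues $(k+\xi^{+}_n(s)-1)(k+\xi^{-}_n(s)-1)$ with polynomial multiplicity $d_{n,k}$, and the zero eigenvalue occurs precisely when $|s|<\che{n}$ and $s=\pm$ an appropriate value — more precisely the factor $(\che{n}^2-s^2)$ must be split off (this is the source of the $\d_{|s|<\che{n}}$ term, coming from the $k=0$ contribution when that eigenvalue vanishes, i.e. removing it from $S^+$ and accounting for it separately).

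Next I would expand $d_{n,k}$ as a polynomial in $(k+\che{n})$, say $d_{n,k}=\sum_{j} c_{n,j}(k+\che{n})^{2j}$ (only even powers survive by the symmetry $k\mapsto -k+1-n$ of the eigenvalue, when $n$ is odd; for even $n$ there will also be odd-power pieces, which is exactly what forces $f_{n,r}(s)$ to be nonzero only in the even case). Then, using $(k+\che{n})^2-s^2 = \bigl((k+\xi^+_n(s)-1)(k+\xi^-_n(s)-1)\bigr)$ and the binomial expansion of $\bigl((k+\che{n})^2-s^2\bigr)^{-w}$ — valid for $s\in I_n$ so that $|s|< k+\che{n}$ fails only at $k=0$, handled separately — I would write
\begin{equation}
\label{for:zeta-expansion-plan}
 \z_{L_n(s)}(w)=\sum_{k}\frac{d_{n,k}}{\bigl((k+\che{n})^2-s^2\bigr)^{w}}
 =\sum_{j,m}a_{j,m}(s)\,\z\bigl(2w-2j+2m,\xi\bigr)+(\text{entire in }w),
\end{equation}
where the $a_{j,m}(s)$ are polynomials in $s^2$ coming from $c_{n,j}$ times binomial coefficients times powers of $s^2$, and $\xi$ runs over $\xi^+_n(s)$ and $\xi^-_n(s)$ (one gets both shifts by symmetrizing). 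Actually it is cleaner to first symmetrize in $s$ and write things in terms of $\z(2w-2j+2m,\xi^{\pm}_n(s))$; I would make $d_{n,k}$ symmetric under $k+\che{n}\mapsto -(k+\che{n})$ by the reflection trick, picking up the odd part as an explicit elementary correction.

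The third step is to differentiate \eqref{for:zeta-expansion-plan} at $w=1-r$. Using $\frac{\p}{\p w}\z(2w-\nu,\xi)\big|_{w=1-r}=2\,\z'(2-2r-\nu,\xi)$ and the relation $\exp\bigl(\frac{\p}{\p w}\z(w,z)|_{w=1-l}\bigr)$ expressed through the Barnes multiple gamma functions $\G_l(z)$ — via the known formula relating $\z'(1-l,z)$ (for $l\ge 1$) to $\log\G_l(z)$ plus polynomial-in-$z$ terms (the Vardi/Barnes machinery already invoked in the excerpt for $r=1$, cf. \eqref{for:G1/2}) — each term $a_{j,m}(s)\z'(\text{neg.\ integer},\xi^{\pm}_n(s))$ becomes $\b$-coefficient times $\log\G_l(\xi^{\pm}_n(s))$ plus a polynomial in $\xi^{\pm}_n(s)$, hence a polynomial in $s^2$. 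The largest index $l$ that appears is governed by the most negative argument $2-2r-\nu$ with $\nu$ ranging over $2j-2m$, $0\le j\le \che{n}$ roughly, giving $l$ up to $2r+n-2$; terms with non-negative or with value-only (no derivative) contributions collapse into $e^{f_{n,r}(s)}$ together with the split-off $k=0$ piece $(\che{n}^2-s^2)^{(\che{n}^2-s^2)^{r-1}}$, whose exponent I would compute directly as $-\frac{\p}{\p w}\bigl(d_{n,0}(\che{n}^2-s^2)^{-w}\bigr)$ evaluated appropriately, noting $d_{n,0}=1$. Collecting, exponentiating, and checking that all $a_{j,m}(s)$, and hence $\b_{n,r}(s,l)$ and $f_{n,r}(s)$, are even polynomials in $s$ with rational coefficients (clear since $d_{n,k}\in\bQ[k]$ and the binomial coefficients are rational, and only even powers of $s$ enter through $(k+\che{n})^2-s^2$) finishes the proof. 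The main obstacle will be the careful bookkeeping: correctly isolating the $k=0$ term when the corresponding eigenvalue vanishes (the $\d_{|s|<\che{n}}$ case distinction), handling the odd part of $d_{n,k}$ for even $n$ so that it lands in $f_{n,r}(s)$ rather than in the $\G_l$ factors, and verifying that no spurious pole of a shifted Hurwitz zeta hits $w=1-r$ (it does not, since the poles of $\z(2w-\nu,\xi)$ are at $w=\frac{\nu+1}{2}$, never a non-positive integer for the $\nu$'s occurring) — so that the term-by-term differentiation is legitimate and the holomorphy assumption from the introduction is in fact realized here.
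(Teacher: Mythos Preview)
Your outline has the right skeleton, and in fact it is close to the paper's actual route, but it glosses over the step that carries essentially all of the work. The binomial expansion of $\bigl((k+\check n)^2-s^2\bigr)^{-w}$, or equivalently of $(k+\xi^+_n(s))^{-w}(k+\xi^-_n(s))^{-w}$, does not produce a \emph{finite} $\bQ[s^2]$-linear combination of Hurwitz zetas: the coefficients $\binom{w+m-1}{m}$ depend on $w$, and the sum over $m$ is infinite. When you differentiate at $w=1-r$ you therefore get two kinds of terms, and for $m\ge r$ the binomial coefficient vanishes at $w=1-r$, so those terms contribute through $\bigl(\partial_w\binom{w+m-1}{m}\bigr)\big|_{w=1-r}$ times a \emph{value} of a Hurwitz zeta. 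That tail is an honest infinite series in $s$, and summing it in closed form is the heart of the argument. In the paper this is Theorem~\ref{thm:phi}: an iterated-integral identity for $\Phi^m_r(t,z)=\idotsint\xi_1^m\psi(\xi_1+z)\,d\xi_1\cdots d\xi_r$ which repackages the tail as a finite product of Milnor gammas $\mG_k(t+z)$. It is precisely this resummation that makes the \emph{second} shifted argument $\xi^+_n(s)=t+z$ appear; the finite head of the series only sees $\xi^-_n(s)$. Your ``symmetrize in $s$'' is exactly where this difficulty is hiding, not a way to avoid it.

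Two smaller points. First, your expansion in powers of $s^2/(k+\check n)^2$ lands the Hurwitz zeta at the $s$-independent shift $1+\check n$, so the derivatives would produce $\G_l(1+\check n)$ with $s$-dependent exponents, not $\G_l(\xi^\pm_n(s))$; that is a different (and, because the series does not terminate, not obviously finite) factorization. The paper instead expands $(k+\xi^+_n(s))^{-w}$ about $k+\xi^-_n(s)$ (Lemma~\ref{lem:H}), which is what places the zeta at $\xi^-_n(s)$. Second, $\z'(1-l,z)=\log\mG_l(z)$ on the nose --- this is the Milnor gamma, with no extra polynomial --- and the passage from $\mG_l$ to the Barnes $\G_l$ is a separate combinatorial identity (equation~\eqref{for:reduce-mG}); the evenness of the resulting exponents $\b_{n,r}(s,l)$ is then not automatic but comes from a generating-function computation (Lemma~\ref{lem:bpm}).
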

 
 Remark that $S^{+}(L_n(s))=S(L_n(s))$ if $|s|<\check{n}$ and
 $S(L_n(s))\setminus\{\check{n}^2-s^2\}$ otherwise (see Section~\ref{sec:spectral zeta}).
 As corollaries,
 letting $s=\check{n}$ and $\frac{1}{2}$
 (notice that these are in $I_n$ for all $n\in\bN$), 
 one can respectively obtain the following expressions of
 the determinants of the Laplacian and the conformal Laplacian.

\begin{cor}
 It holds that 
\begin{align}
\label{for:maindn}
 \det\bigl(\Delta_n\bigr)
&=e^{f_{n,1}(\check{n})}
\prod^{n-1}_{l=1}
\Bigl(\G_l(n)\G_l(1)\Bigr)^{-1}\cdot \Bigl(\G_n(n)\G_n(1)\Bigr)^{-2},\\
\label{for:mainyn}
 \det\bigl(Y_n\bigr)
&=\bigl(\frac{n(n-2)}{4}\bigr)^{\d_{n\ge 3}}
\cdot e^{f_{n,1}(\frac{1}{2})}
\prod^{n-1}_{l=1}
\Bigl(\G_l\bigl(\frac{n}{2}+1\bigr)\G_l\bigl(\frac{n}{2}\bigr)\Bigr)^{-1}\cdot 
\Bigl(\G_n\bigl(\frac{n}{2}+1\bigr)\G_n\bigl(\frac{n}{2}\bigr)\Bigr)^{-2}, 
\end{align}
 where $\d_{n\ge 3}:=1$ if $n\ge 3$ and $0$ otherwise.
\end{cor}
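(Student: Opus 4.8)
The plan is to read off both identities from Theorem~\ref{thm:main} with $r=1$, so that $\Det_1=\det$, and to specialize $s$ to the two values at which $L_n(s)$ becomes the operator of interest. First I would record that $\check{n}$ and $\tfrac12$ both lie in $I_n=(-\check{n}-1,\check{n}+1)$ for every $n\ge 1$ (immediate, since $\check{n}<\check{n}+1$ and $\tfrac12<\check{n}+1$ as $\check{n}\ge 0$), so Theorem~\ref{thm:main} applies, and then invoke the identities $\Delta_n=L_n(\check{n})$ and $Y_n=L_n(\tfrac12)$ recorded just before the theorem. With $r=1$ the product in \eqref{for:DetrB-intro} runs over $1\le l\le 2r+n-2=n$, and the substitutions are routine: $\xi^{+}_n(\check{n})=1+2\check{n}=n$ and $\xi^{-}_n(\check{n})=1$, while $\check{n}^2-\check{n}^2=0$ and $\d_{|\check{n}|<\check{n}}=0$, so the curvature prefactor collapses to $0^0=1$; similarly $\xi^{+}_n(\tfrac12)=\tfrac n2+1$, $\xi^{-}_n(\tfrac12)=\tfrac n2$, $\check{n}^2-\tfrac14=\tfrac{(n-1)^2-1}{4}=\tfrac{n(n-2)}{4}$, $(\check{n}^2-\tfrac14)^{r-1}=1$, and $\d_{|1/2|<\check{n}}=\d_{n\ge 3}$ because $\tfrac12<\tfrac{n-1}{2}\Leftrightarrow n\ge 3$ (when $n\le 2$ the exponent is $0$, so the base being $\le 0$ causes no ambiguity). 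The factors $e^{f_{n,1}(\check{n})}$ and $e^{f_{n,1}(\tfrac12)}$ carry over verbatim and vanish when $n$ is odd.

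The only content beyond substitution is to pin down the exponents, and I would show $\b_{n,1}(s,l)=-1$ for $1\le l\le n-1$ and $\b_{n,1}(s,n)=-2$, independently of $s$. This should fall out of the explicit form of $\b_{n,r}(s,l)$ produced in the proof of Theorem~\ref{thm:main}: for $r=1$ only the leading term of the expansion $\bigl[(k+\check{n})^2-s^2\bigr]^{-w}=(k+\check{n})^{-2w}\sum_{j\ge 0}\binom{w+j-1}{j}\tfrac{s^{2j}}{(k+\check{n})^{2j}}$ survives differentiation at $w=1-r=0$, and the surviving weights are the coefficients $c_{n,l}$ in the expansion of the multiplicity $d_k=\dim(\text{degree-}k\text{ spherical harmonics on }S^n)$ in the basis $\bigl\{\binom{k+l-2}{l-1}\bigr\}_{l=1}^{n}$, with $\b_{n,1}(s,l)=-c_{n,l}$. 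Using the Pascal identity $d_k=\binom{k+n}{n}-\binom{k+n-2}{n}=\binom{k+n-1}{n-1}+\binom{k+n-2}{n-1}$ together with the hockey-stick identity $\sum_{j=0}^{n-2}\binom{k+j-1}{j}=\binom{k+n-2}{n-2}$, one finds $d_k=\sum_{l=1}^{n-1}\binom{k+l-2}{l-1}+2\binom{k+n-2}{n-1}$, i.e.\ $c_{n,l}=1$ for $l<n$ and $c_{n,n}=2$; the doubling at the top index $l=n=2r+n-2$ is precisely the statement that the leading coefficient of $d_k$ in $k$ equals $\tfrac{2}{(n-1)!}$.

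I expect the substitution itself to be the easy part, the only mild obstacle being to keep the identification of $\b_{n,1}(s,l)$ self-contained. One route is to extract $\b_{n,r}(s,l)$ from the proof of Theorem~\ref{thm:main} and specialize; a cleaner alternative is to rerun the $r=1$ computation directly, using $\det\bigl(L_n(s)\bigr)=\prod_{k}\bigl[(k+\check{n}+s)(k+\check{n}-s)\bigr]^{d_k}$ (zeta-regularized, with the $k=0$ factor $\check{n}^2-s^2$ included exactly when it is positive, as encoded by the prefactor of Theorem~\ref{thm:main}) and matching the resulting Weierstrass-type products against the Barnes functions through the regularized identity $\prod_{m\ge 0}(m+x)^{\binom{m+l-1}{l-1}}=\Gamma_l(x)^{-1}$ up to the normalization built into $\Gamma_l$, the correction being exactly the $\exp$-polynomial factor absorbed into $e^{f_{n,1}(s)}$. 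As sanity checks I would compare \eqref{for:maindn} with Vardi's formula \eqref{for:vardi} via the reflection and multiplication formulas for the Barnes gamma functions, and verify the small cases $n=1,2$ by hand (e.g.\ $\det(\Delta_1-s^2)=(\Gamma_1(1+s)\Gamma_1(1-s))^{-2}$, which reproduces the classical value $(2\sin\pi s/s)^2$).
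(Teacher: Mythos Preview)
Your plan is correct and matches the paper's own argument: both proofs amount to substituting $r=1$ and $s=\check n$ (resp.\ $s=\tfrac12$) into Theorem~\ref{thm:main} and then reading off the exponents $\b_{n,1}(s,l)$. The paper obtains $\b_{n,1}(s,l)=-1$ for $1\le l\le n-1$ and $\b_{n,1}(s,n)=-2$ as the special case $r=1$ of its general closed form \eqref{for:br} (equivalently, of the generating function identity \eqref{for:gene-b}, which for $r=1$ reads $\sum_l \b_{n,1}(s,l)\binom{Y+l-1}{l-1}=-P_n(Y+1+\check n)$). Your Pascal/hockey-stick computation is exactly a direct combinatorial proof of that $r=1$ identity: writing $d_{Y+1}=P_n(Y+1+\check n)=\binom{Y+n}{n-1}+\binom{Y+n-1}{n-1}$ and expanding the first summand by the hockey-stick identity gives $d_{Y+1}=\sum_{l=1}^{n-1}\binom{Y+l-1}{l-1}+2\binom{Y+n-1}{n-1}$, hence the claimed exponents. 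This is a pleasant shortcut compared to the paper's route through the $c^{k}_{n}(l)$ recursion, at the cost of being specific to $r=1$.

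One caveat: your heuristic ``for $r=1$ only the leading $j=0$ term of the expansion of $[(k+\check n)^2-s^2]^{-w}$ survives differentiation at $w=0$'' is not literally true --- the $j\ge 1$ terms do contribute to the derivative (each $\binom{w+j-1}{j}$ has a simple zero at $w=0$), and they are precisely what produces $f_{n,1}(s)$. Your alternative route via the regularized product $\prod_{m\ge 0}(m+x)^{\binom{m+l-1}{l-1}}=\G_l(x)^{-1}$ is the cleaner justification, and you correctly flag that the multiplicative anomaly is absorbed into $e^{f_{n,1}(s)}$; the paper's machinery in Section~\ref{sec:main} (via $H_{\xi^+,\xi^-}$ and Proposition~\ref{prop:I}) is exactly what makes that anomaly explicit for general $r$.
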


 Remark that these factorization formulas with $r=1$ are essentially obtained in
 \cite{QuineChoi1996}.
 We also remark that, from a calculation point of view,
 the formula \eqref{for:maindn} is more easily computable than \eqref{for:vardi}
 because we have the ladder relation $\G_{l}(z+1)=\G_l(z)\G_{l-1}(z)^{-1}$
 for the multiple gamma functions. 
 In fact, using the ladder relation,
 from \eqref{for:maindn} and \eqref{for:mainyn},
 one can respectively obtain explicit expressions of $\det(\Delta_n)$ and $\det(Y_n)$
 in terms of the derivatives of the Riemann zeta function similar to \eqref{for:vardi}. 
 
 The organization of the paper is as follows.
 In Section~\ref{sec:spectral zeta},
 we first study the spectral zeta function $\z_{L_n(s)}(w)$ in detail and 
 show that it is expressed as a linear combination of the ``two-variable Hurwitz zeta function''
 $H_{\a_1,\a_2}(w_1,w_2)$ (Proposition~\ref{prop:anazeta}).
 Section~\ref{sec:MG} is devoted to reviewing the multiple gamma functions,
 which we need throughout the paper.
 In Section~\ref{sec:main},
 we show that the higher depth determinant $\Det_{r}(L_n(s))$
 can be expressed as a product of $I^{d}_{n,r}(s)$,
 which are defined by a derivative of $H_{\a_1,\a_2}(w_1,w_2)$,
 and then prove that these can be written in terms of the multiple gamma functions
 (Proposition~\ref{prop:I}).   
 Simplifying this expression, we obtain the main result (Theorem~\ref{thm:main}).
 Finally, in Section~\ref{sec:r=1}, as corollaries, 
 we study the case $r=1$ precisely and
 derive explicit expressions of $\det(\Delta_n)$ and $\det(Y_n)$
 in terms of the derivatives of the Riemann zeta function (Corollary~\ref{cor:r=1}).

 Throughout the present paper, 
 we denote by $\bC$, $\bR$ and $\bQ$ the fields of
 all complex, all real and all rational numbers, respectively.
 We also use the notation $\bZ$, $\bN$ and $\bN_0$ to denote the sets of
 all rational, all positive and all non-negative integers, respectively.

\section{Spectral zeta functions}
\label{sec:spectral zeta}

 Let $\l_{n,k}$ be the $k$-th eigenvalue of the Laplacian $\Delta_n$ 
 and $m_{n,k}$ the multiplicity of $\l_{n,k}$.
 It is known that $\l_{n,k}$ and $m_{n,k}$ are given by
 $\l_{n,k}=k(k+n-1)$ and $m_{n,k}=\binom{n+k}{n}-\binom{n+k-2}{n}$, respectively.
 Let $s\in I_n:=(-\check{n}-1,\check{n}+1)$ with $\check{n}:=\frac{n-1}{2}$ and
 $L_n(s):=\Delta_n+\check{n}^2-s^2$.
 Notice that, since $\l_{n,k}+\check{n}^2-s^2=(k+\check{n}+s)(k+\check{n}-s)>0$ for all $k\ge 1$,
 we have $S^{+}(L_n(s))=S(L_n(s))$ if $|s|<\check{n}$ and
 $S(L_n(s))\setminus\{\check{n}^2-s^2\}$ otherwise.
 Instead of $\z_{L_n(s)}(w)$, we study the function $\tz_{L_n(s)}(w)$
 defined by 
\begin{align}
\label{for:spectralzetan}
 \tz_{L_n(s)}(w)
:=\sum^{\infty}_{k=1}\frac{m_{n,k}}{\bigl(\l_{n,k}+\check{n}^2-s^2\bigr)^w}
=\sum^{\infty}_{k=0}
\frac{\binom{n+k+1}{n}-\binom{n+k-1}{n}}{(k+\xi^{+}_{n}(s))^w(k+\xi^{-}_{n}(s))^w},
\end{align}
 where $\xi^{\pm}_n(s):=1+\check{n}\pm s$.
 The series converges absolutely for $\Re(w)>\frac{n}{2}$ and defines a holomorphic function in the region.
 From the definition \eqref{def:speczeta} and the above observation, 
 we have 
\begin{equation}
\label{for:z-tz}
 \z_{L_n(s)}(w)
=\frac{\d_{|s|<\check{n}}}{(\check{n}^2-s^2)^w}+\tz_{L_n(s)}(w),
\end{equation}
 where $\d_{|s|<\check{n}}:=1$ if $|s|<\check{n}$ and $0$ otherwise.
 To study analytic properties of $\tz_{L_n(s)}(w)$,
 we employ the following two-variable Hurwitz zeta function
 $H_{\a_1,\a_2}(w_1,w_2)$ studied by Mizuno (\cite{Mizuno2006}):
\[
 H_{\a_1,\a_2}(w_1,w_2):
=\sum^{\infty}_{k=0}
\frac{1}{(k+\a_1)^{w_1}(k+\a_2)^{w_2}}.
\]
 This converges absolutely for $\Re(w_1+w_2)>1$.
 Here, we let $\Re(\a_1)>0$ and $\Re(\a_2)>0$.

\begin{lem}
\label{lem:H}
 Suppose $|\a_2-\a_1|<\min\{1,|\a_2|\}$.
 Then, 
\begin{equation}
\label{for:H-binom}
 H_{\a_1,\a_2}(w_1,w_2)
=\sum^{\infty}_{l=0}\binom{w_1+l-1}{l}(\a_2-\a_1)^l\z(w_1+w_2+l,\a_2).
\end{equation}
 This gives a meromorphic continuation of $H_{\a_1,\a_2}(w_1,w_2)$
 to the whole space $\bC^2$
 with possible singularities on $w_1+w_2=1-l$ for $l\in\bZ_{\ge 0}$.
\end{lem}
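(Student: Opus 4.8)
The plan is to establish \eqref{for:H-binom} first on the region of absolute convergence $\Re(w_1+w_2)>1$ by expanding each summand in a binomial series and exchanging the two summations, and then to read off the meromorphic continuation directly from the resulting series.

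First I would write $k+\a_1=(k+\a_2)\bigl(1+\tfrac{\a_1-\a_2}{k+\a_2}\bigr)$. The hypothesis $|\a_2-\a_1|<\min\{1,|\a_2|\}$ is exactly what ensures $\bigl|\tfrac{\a_1-\a_2}{k+\a_2}\bigr|<1$ for every $k\in\bZ_{\ge0}$: for $k=0$ this reads $|\a_2-\a_1|<|\a_2|$, while for $k\ge1$ one has $|k+\a_2|\ge|1+\a_2|>1$ (here $\Re(\a_2)>0$ enters), so $\bigl|\tfrac{\a_1-\a_2}{k+\a_2}\bigr|\le|\a_2-\a_1|<1$. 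With principal branches (legitimate, since $k+\a_2$ and $1+\tfrac{\a_1-\a_2}{k+\a_2}$ both lie in the right half-plane), the binomial theorem together with $\binom{-w_1}{l}=(-1)^l\binom{w_1+l-1}{l}$ gives
\[
(k+\a_1)^{-w_1}(k+\a_2)^{-w_2}
=\sum_{l=0}^{\infty}\binom{w_1+l-1}{l}(\a_2-\a_1)^l\,(k+\a_2)^{-w_1-w_2-l}.
\]
Then I would sum over $k\ge0$ and interchange the order of summation. This is justified by absolute convergence of the double series for $\Re(w_1+w_2)>1$: the factor $\bigl|\binom{w_1+l-1}{l}\bigr|$ is polynomially bounded in $l$, the $k=0$ contribution is dominated by $\sum_l\bigl|\binom{w_1+l-1}{l}\bigr|\,(|\a_2-\a_1|/|\a_2|)^l<\infty$, and the $k\ge1$ contribution by $\bigl(\sum_l\bigl|\binom{w_1+l-1}{l}\bigr|\rho^l\bigr)\sum_{k\ge1}|k+\a_2|^{-\Re(w_1+w_2)}<\infty$, where $\rho:=|\a_2-\a_1|/|1+\a_2|<1$. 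After interchanging and recognizing $\sum_{k\ge0}(k+\a_2)^{-w_1-w_2-l}=\z(w_1+w_2+l,\a_2)$, one obtains \eqref{for:H-binom} on $\Re(w_1+w_2)>1$.

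For the continuation I would show that the right-hand side of \eqref{for:H-binom} is itself meromorphic on $\bC^2$ with poles confined to $\bigcup_{l\ge0}\{w_1+w_2=1-l\}$. Recall that for $\Re(\a_2)>0$ the Hurwitz zeta function $\z(\,\cdot\,,\a_2)$ continues meromorphically to $\bC$ with a single simple pole, at $1$ with residue $1$. Fix a compact $K\subset\bC^2$ and pick $N\in\bN$ with $\Re(w_1+w_2)+N>1$ for all $(w_1,w_2)\in K$. For $l\ge N$ the argument $w_1+w_2+l$ then stays in the half-plane where the Dirichlet series of $\z$ converges, and the same estimate as above — splitting off $k=0$ and bounding $(|\a_2-\a_1|/|k+\a_2|)^{l}\le\rho^{\,l-N}|\a_2-\a_1|^{N}|k+\a_2|^{-N}$ for $k\ge1$ — shows that $\sum_{l\ge N}\binom{w_1+l-1}{l}(\a_2-\a_1)^l\z(w_1+w_2+l,\a_2)$ converges absolutely and uniformly on $K$, hence is holomorphic there. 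The finitely many terms with $0\le l<N$ form a meromorphic function whose only poles lie on $w_1+w_2=1-l$, $0\le l<N$. Since near any hyperplane $w_1+w_2=1-l_0$ only the single term $l=l_0$ is singular, with a simple pole (as $\binom{w_1+l_0-1}{l_0}(\a_2-\a_1)^{l_0}$ is holomorphic), and since $K$ was arbitrary, the right-hand side is meromorphic on $\bC^2$ with possible singularities only on $w_1+w_2=1-l$, $l\in\bZ_{\ge0}$. Being equal to $H_{\a_1,\a_2}(w_1,w_2)$ on $\Re(w_1+w_2)>1$, it is the asserted continuation.

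The step requiring the most care is the justification of the term-by-term manipulations and of the local uniform convergence: one must use \emph{both} inequalities in $|\a_2-\a_1|<\min\{1,|\a_2|\}$ — the bound by $|\a_2|$ to control the $k=0$ summand and the bound by $1$ (with $\Re(\a_2)>0$) to control all summands $k\ge1$ uniformly — and combine these with the polynomial growth of the binomial coefficients and the classical continuation of the Hurwitz zeta function. Everything else is routine bookkeeping.
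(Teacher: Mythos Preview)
Your proposal is correct and follows essentially the same route as the paper's proof: write $(k+\a_1)^{-w_1}=(k+\a_2)^{-w_1}\bigl(1-\tfrac{\a_2-\a_1}{k+\a_2}\bigr)^{-w_1}$, expand by the binomial theorem, interchange the sums, and then use the analytic continuation of the Hurwitz zeta together with the geometric decay in $l$ to read off the continuation. Your treatment is in fact more careful than the paper's: you justify the interchange of summations explicitly and, for the continuation, split off a finite head and control the tail uniformly on compacts, whereas the paper simply invokes ``$|\a_2-\a_1|<1$ and the fact that $\z(w_1+w_2+l,\a_2)$ is uniformly bounded with respect to the variable $l$''.
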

\begin{proof}
 The assumption $|\a_2-\a_1|<|\a_2|$ implies that
 $|\frac{\a_2-\a_1}{k+\a_2}|<1$ for all $k\ge 0$. 
 Hence, by the binomial theorem, we have 
\begin{align*}
 H_{\a_1,\a_2}(w_1,w_2)
&=\sum^{\infty}_{k=0}\frac{1}{(k+\a_2)^{w_1+w_2}}
\Bigl(1-\frac{\a_2-\a_1}{k+\a_2}\Bigr)^{-w_1}\\
&=\sum^{\infty}_{k=0}\frac{1}{(k+\a_2)^{w_1+w_2}}
\sum^{\infty}_{l=0}\binom{w_1+l-1}{l}
\Bigl(\frac{\a_2-\a_1}{k+\a_2}\Bigr)^l\\
&=\sum^{\infty}_{l=0}\binom{w_1+l-1}{l}(\a_2-\a_1)^l\z(w_1+w_2+l,\a_2).
\end{align*}
 This shows the expression \eqref{for:H-binom}.
 Moreover, by the assumption $|\a_2-\a_1|<1$ and the fact that 
 $\z(w_1+w_2+l,\a_2)$ is uniformly bounded with respect to the variable $l$,
 one sees that this gives a meromorphic continuation of $H_{\a_1,\a_2}(w_1,w_2)$ to $\bC^2$.
 The rest of the assertion is clear because $\z(w,z)$ has a simple pole at $w=1$.
\end{proof}

\begin{remark}
 In \cite{Mizuno2006}, 
 the expression \eqref{for:H-binom} is obtained
 by establishing a contour integral representation of $H_{\a_1,\a_2}(w_1,w_2)$.
 See also \cite{MatsumotoWeng2002},
 where meromorphic continuations of more generally ``multi-variable Hurwitz zeta functions'' are obtained
 by using the Mellin-Barnes integral.
\end{remark}

 We next write $\tz_{L_{n}(s)}(w)$
 as a linear combination of $H_{\a_1,\a_2}(w_1,w_2)$.
 Define 
\begin{equation}
\label{def:P}
 P_n(z):=\binom{z+\che{n}+1}{n}-\binom{z+\che{n}-1}{n}.
\end{equation}
 Since $P_n(z)=\frac{1}{n!}((z-\check{n}+1)_n-(z-\check{n}-1)_n)$,
 where $(w)_n:=w(w+1)\cdots (w+n-1)$,  
 using the expansion $(w)_n=\sum^{n}_{m=0}(-1)^{n+m}s(n,m)w^m$, 
 where $s(n,m)$ is the Stirling number of the first kind,
 it can be written as 
\begin{equation}
\label{for:P}
 P_n(z)
=\sum^{n-1}_{d=0}T_{n,d}(s)(z-s)^d
=\sum^{n-1}_{d=0}T_{n,d}(-s)(z+s)^d.
\end{equation}
 Here $T_{n,d}(s)$ is the polynomial of degree $n-1-d$ defined by
\[
 T_{n,d}(s)
:=\frac{1}{n!}\sum^{n}_{m=d+1}\binom{m}{d}(-1)^{n+m}s(n,m)
\Bigl((s-\che{n}+1)^{m-d}-(s-\che{n}-1)^{m-d}\Bigr).
\] 
 The polynomial $T_{n,d}(s)$ plays an important rule in our study and 
 will be precisely investigated in Subsection~\ref{subsec:polynomials}.
 Employing $T_{n,d}(s)$, one obtains the following proposition.

\begin{prop}
\label{prop:anazeta}
 Suppose $|s|<\frac{1}{3}$. Then, 
\begin{equation}
\label{for:spectralzetaH}
 \tz_{L_{n}(s)}(w)
=\sum^{n-1}_{d=0}T_{n,d}(s)H_{\xi^{+}_{n}(s),\xi^{-}_{n}(s)}(w,w-d).
\end{equation}
 This gives a meromorphic continuation of $\tz_{L_{n}(s)}(w)$
 to the whole plane $\bC$ with possible simple poles at
 $w=\frac{n-m}{2}$ for $m\in\bZ_{\ge 0}$
 except for non-positive integer points.
\end{prop}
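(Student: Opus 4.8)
The plan is to substitute the polynomial expansion \eqref{for:P} into the defining series \eqref{for:spectralzetan} and then continue the result termwise with Lemma~\ref{lem:H}. First I would observe, directly from \eqref{def:P} and $2\check{n}=n-1$, that
\[
P_n(k+\check{n}+1)=\binom{k+2\check{n}+2}{n}-\binom{k+2\check{n}}{n}=\binom{n+k+1}{n}-\binom{n+k-1}{n},
\]
so $P_n(k+\check{n}+1)$ is exactly the numerator appearing in \eqref{for:spectralzetan}. Since $(k+\check{n}+1)-s=k+\xi^{-}_{n}(s)$, the second form of \eqref{for:P} gives $P_n(k+\check{n}+1)=\sum_{d=0}^{n-1}T_{n,d}(s)(k+\xi^{-}_{n}(s))^{d}$. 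For $\Re(w)>\tfrac n2$ every series involved converges absolutely (note $\Re(w+(w-d))>1$ for $0\le d\le n-1$), so the finite $d$-sum may be pulled out of the $k$-sum:
\[
\tz_{L_{n}(s)}(w)=\sum_{k=0}^{\infty}\frac{\sum_{d=0}^{n-1}T_{n,d}(s)(k+\xi^{-}_{n}(s))^{d}}{(k+\xi^{+}_{n}(s))^{w}(k+\xi^{-}_{n}(s))^{w}}=\sum_{d=0}^{n-1}T_{n,d}(s)\,H_{\xi^{+}_{n}(s),\xi^{-}_{n}(s)}(w,w-d),
\]
which is \eqref{for:spectralzetaH}.

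Next I would apply Lemma~\ref{lem:H} to each summand $H_{\xi^{+}_{n}(s),\xi^{-}_{n}(s)}(w,w-d)$. Its hypothesis demands $|\xi^{-}_{n}(s)-\xi^{+}_{n}(s)|=2|s|<\min\{1,|\xi^{-}_{n}(s)|\}$; since $n\ge1$ forces $\check{n}\ge0$ we have $|\xi^{-}_{n}(s)|=1+\check{n}-s\ge1-|s|>\tfrac23$ while $2|s|<\tfrac23$, so the standing assumption $|s|<\tfrac13$ secures this for all $n$ at once. Lemma~\ref{lem:H} then writes
\[
H_{\xi^{+}_{n}(s),\xi^{-}_{n}(s)}(w,w-d)=\sum_{l\ge0}\binom{w+l-1}{l}\bigl(\xi^{-}_{n}(s)-\xi^{+}_{n}(s)\bigr)^{l}\z\bigl(2w-d+l,\xi^{-}_{n}(s)\bigr),
\]
holomorphic except for at most simple poles where $2w-d+l=1$. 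Setting $m:=n-1-d+l$, that pole locus is $w=\tfrac{n-m}{2}$, and as $(d,l)$ runs over $\{0,\dots,n-1\}\times\bZ_{\ge0}$ the exponent $m$ runs over all of $\bZ_{\ge0}$; hence every possible pole of $\tz_{L_{n}(s)}(w)$ lies at some $w=\tfrac{n-m}{2}$ with $m\in\bZ_{\ge0}$, and each is at most simple.

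It then remains to check that the candidate poles at non-positive integers are spurious. Fix $j\in\bZ_{\ge0}$ and consider $w=-j$. Because $\z(\cdot,\xi^{-}_{n}(s))$ has a single simple pole, at $1$ with residue $1$, in the $d$-th summand only the index $l=d+2j+1$ is singular there, and termwise the residue of that summand at $w=-j$ equals
\[
\tfrac12\,T_{n,d}(s)\binom{d+j}{d+2j+1}\bigl(\xi^{-}_{n}(s)-\xi^{+}_{n}(s)\bigr)^{d+2j+1}.
\]
Since $d+2j+1>d+j\ge0$, the binomial coefficient $\binom{d+j}{d+2j+1}$ vanishes for every $d$, so the total residue of $\tz_{L_{n}(s)}(w)$ at $w=-j$ is $0$; hence $\tz_{L_{n}(s)}(w)$ is holomorphic at every non-positive integer, which is the last assertion. (Alternatively, this holomorphy follows from \eqref{for:z-tz} together with the general fact, recalled in the introduction, that the spectral zeta function of the Laplace-type operator $L_{n}(s)=\Delta_n+(\check{n}^2-s^2)$ is holomorphic at non-positive integers.)

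The reindexing, the identity \eqref{for:P}, and the convergence bookkeeping are routine; the one substantive point is the final step — the vanishing of $\binom{d+j}{d+2j+1}$ that annihilates the residues at $w=0,-1,-2,\dots$ — which is where a direct argument has to do its work (or else be replaced by the heat-kernel asymptotics quoted above).
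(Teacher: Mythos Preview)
Your proof is correct and follows essentially the same route as the paper: expand the numerator via \eqref{for:P} to obtain \eqref{for:spectralzetaH}, check that $|s|<\tfrac13$ forces the hypothesis of Lemma~\ref{lem:H}, and then locate and remove the possible poles at non-positive integers. The only cosmetic difference is in the last step: the paper records the first-order vanishing of $\binom{w+2r+d-2}{2r+d-1}$ at $w=1-r$ via an asymptotic expansion and pairs it with the simple pole of $\z(2w+2r-1,\xi^{-}_{n}(s))$, whereas you (equivalently, with $j=r-1$) just evaluate $\binom{d+j}{d+2j+1}=0$ to kill the residue directly.
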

\begin{proof}
 Since $\binom{n+k+1}{n}-\binom{n+k-1}{n}=P_n(k+1+\check{n})=\sum^{n-1}_{d=0}T_{n,d}(s)(k+\xi^{-}_{n}(s))^d$,
 from \eqref{for:spectralzetan}, 
 one immediately obtains the expression \eqref{for:spectralzetaH}.
 Let $\a_1=\xi^{+}_{n}(s)$ and $\a_2=\xi^{-}_{n}(s)$.
 Then, the assumption $|s|<\frac{1}{3}$ implies that
 $s\in I_n$ and $|\a_2-\a_1|<\min\{1,|\a_2|\}$.
 Hence, from Lemma~\ref{lem:H}, it can be written as
\begin{equation}
\label{for:Hss}
 H_{\xi^{+}_{n}(s),\xi^{-}_{n}(s)}(w,w-d)
=\sum^{\infty}_{l=0}\binom{w+l-1}{l}(-2s)^l\z\bigl(2w-d+l,\xi^{-}_{n}(s)\bigr).
\end{equation}
 This gives a meromorphic continuation of $H_{\xi^{+}_{n}(s),\xi^{-}_{n}(s)}(w,w-d)$
 to $\bC$ as a function of $w$
 with possible simple poles at $w=\frac{d+1-l}{2}$ for $l\in\bZ_{\ge 0}$.
 Therefore, from the expression \eqref{for:spectralzetaH},
 $\tz_{L_{n}(s)}(w)$ also admits a meromorphic continuation to $\bC$ 
 with possible simple poles at $w=\frac{n-m}{2}$ for $m\in\bZ_{\ge 0}$.
 We now claim that it is holomorphic at non-positive integer points.
 To see this, it is enough to show that
 the summand of the righthand-side of \eqref{for:Hss} with $l=2r+d-1$
 is holomorphic at $w=1-r$ for $r\in\bN$.
 In fact, this can be seen from the asymptotic formulas 
\begin{align*}
 \binom{w+2r+d-2}{2r+d-1}
&=\frac{(-1)^{r-1}(r-1)!(r+d-1)!}{(2r+d-1)!}(w+r-1)+O\bigl((w+r-1)^2\bigr),\\
 \z\bigl(2w+2r-1,\xi^{-}_{n}(s)\bigr)
&=\frac{1}{2(w+r-1)}+O(1)
\end{align*}
 as $w\to 1-r$.
 This completes the proof of the proposition.
\end{proof}

\section{Multiple gamma functions}
\label{sec:MG}

 In this section,
 we review the multiple gamma functions.

\subsection{Definitions}

 The Barnes multiple zeta function (\cite{Barnes}) is defined by  
\begin{equation*}
 \z_n(w,z)
:=\sum_{m_1,\ldots,m_n\ge 0}\frac{1}{(m_1+\cdots+m_n+z)^w}
\qquad (\Re(w)>n).
\end{equation*} 
 This clearly gives a generalization of the Hurwitz zeta function: $\z_{1}(w,z)=\z(w,z)$.
 It is known that $\z_n(w,z)$ admits a meromorphic continuation to the whole plane $\bC$
 with possible simple poles at $w=1,2,\ldots,n$.
 Using $\z_n(w,z)$, the multiple gamma function $\G_{n,r}(z)$ of depth $r$ is defined by
\begin{equation*}
 \G_{n,r}(z):=
\exp\Bigl(\frac{\p}{\p w}\z_{n}(w,z)\Bigl|_{w=1-r}\Bigr).
\end{equation*}
 In particular,
 we put $\G_{n}(z):=\G_{n,1}(z)$ and $\mG_{r}(z):=\G_{1,r}(z)$.  
 These are respectively called 
 the Barnes multiple gamma function (\cite{Barnes})
 and the Milnor gamma function of depth $r$ (\cite{M}, see also \cite{KOW}).
 Note that, from the Lerch formula
 $\exp(\frac{\p}{\p w}\z(w,z)\bigl|_{w=0})=\frac{\G(z)}{\sqrt{2\pi}}$,
 we have $\G_{1,1}(z)=\G_{1}(z)=\mG_1(z)=\frac{\G(z)}{\sqrt{2\pi}}$.
 The Barnes multiple gamma function is a meromorphic function
 with poles at non-positive integer points. 
 On the other hand,
 the Milnor gamma function in general has branch points at these points.

\subsection{Barnes multiple gamma functions}

 It is easy to see that $\z_n(w,z)$ can be written as a linear combination
 of the Hurwitz zeta functions:
\begin{equation}
\label{for:Barneszeta}
 \z_n(w,z)
=\sum^{\infty}_{m=0}\binom{m+n-1}{n-1}\frac{1}{(m+z)^{w}}
=\sum^{n-1}_{k=0}b_{n,k}(z)\z(w-k,z),
\end{equation}
 where, for $0\le k\le n-1$, $b_{n,k}(z)$ is the polynomial defined by 
\begin{equation}
\label{def:b}
 \binom{m+n-1}{n-1}=\sum^{n-1}_{k=0}b_{n,k}(z)(m+z)^k.
\end{equation}
 More explicitly, it can be written as  
 $b_{n,k}(z)=\frac{(-1)^{n-1-k}}{(n-1)!}\sum^{n-1}_{j=k}\binom{j}{k}s(n,j+1)z^{j-k}$.
 The expression \eqref{for:Barneszeta} immediately yields the equation  
\[
 \G_{n}(z)=e^{\sum^{n-1}_{k=0}b_{n,k}(z)\z'(-k,z)}.
\]
 In particular, one obtains
\begin{align}
\label{for:G1}
 \G_n(1)
&=e^{\sum^{n-1}_{k=0}b_{n,k}(1)\z'(-k)}, \\
\label{for:G1/2}
 \G_n\bigl(\frac{1}{2}\bigr)
&=2^{-\sum^{n-1}_{k=0}b_{n,k}(\frac{1}{2})\frac{2^{-k}B_{k+1}}{k+1}}
e^{\sum^{n-1}_{k=0}b_{n,k}(\frac{1}{2})(2^{-k}-1)\z'(-k)}, 
\end{align}
 where $B_k$ is the Bernoulli number
 defined by the generating function $\frac{te^{t}}{e^t-1}=\sum^{\infty}_{k=0}B_k\frac{t^k}{k!}$.
 Notice that, to obtain the second formula,
 we have used the equation $\z(w,\frac{1}{2})=(2^w-1)\z(w)$
 and the formula $\z(1-k)=-\frac{B_k}{k}$. 
 Moreover, one can claim that 
 the special values of $\G_n(z)$ at both positive integer and half integer points
 are also expressed in terms of $b_{n,k}(z)$
 and the derivatives of $\z(w)$ at non-positive integer points.
 Actually, from the formula \eqref{for:G1} and \eqref{for:G1/2},
 this can be seen from the equation
\begin{equation}
\label{for:ladderformula}
 \G_n(z+m)=\prod^{m}_{l=0}\G_{n-l}(z)^{(-1)^l\binom{m}{l}}
\qquad (0\le m\le n-1),
\end{equation}
 which is obtained by induction on $m$
 from the ladder relation (the case $m=1$) 
\begin{equation}
\label{for:ladder}
 \G_{n}(z+1)=\G_n(z)\G_{n-1}(z)^{-1}.
\end{equation}
 Here, we put $\G_0(z):=z^{-1}$.

%

\subsection{Milnor gamma functions}

 It is shown in \cite{KOW} that
 the Milnor gamma function can be written as a product of the 
 Barnes multiple gamma functions.
 In fact, it can be written as    
\begin{equation}
\label{for:reduce-mG}
 \mG_{r}(z)=\prod^{r}_{l=1}\G_{l}(z)^{c_{r,l}(z)},
\end{equation}
 where, for $1\le l\le r$, $c_{r,l}(z)$ is the polynomial in $z$
 defined by $c_{r,l}(z):=\sum^{l-1}_{k=0}\binom{l-1}{k}(-1)^{k}(z-k-1)^{r-1}$.
 In other words, it is defined by the generating function
\begin{equation}
\label{for:genec}
 (T+z)^{r-1}=\sum^{r}_{j=1}c_{r,j}(z)\binom{T+j-1}{j-1}.
\end{equation}
 For example,
 $c_{r,r}(z)=(r-1)!$, $c_{r,r-1}(z)=\frac{1}{2}(2z-r)(r-1)!$, \ldots,
 and $c_{r,1}(z)=z^{r-1}$.

\section{Higher depth determinants}
\label{sec:main}

\subsection{Main results}
\label{subsec:mainresult}

 Let 
\[
 \widetilde{\Det}_r\bigl(L_n(s)\bigr)
:=\exp\Bigl(-\frac{\p}{\p w}\tz_{L_{n}(s)}(w)\Bigl|_{w=1-r}\Bigr).
\]
 Then, from the equation \eqref{for:z-tz}, we have 
\begin{equation}
\label{for:DettildeDet}
 \Det_r\bigl(L_n(s)\bigr)
=\bigl(\check{n}^2-s^2\bigr)^{(\check{n}^2-s^2)^{r-1}\d_{|s|<\check{n}}}\cdot
 \widetilde{\Det}_r\bigl(L_n(s)\bigr).
\end{equation}
 Let $|s|<\frac{1}{3}$. 
 Then, from Proposition~\ref{prop:anazeta},
 we have
\begin{equation}
\label{for:HD1}
 \widetilde{\Det}_r\bigl(L_n(s)\bigr)
=\prod^{n-1}_{d=0}I^{d}_{n,r}(s)^{T_{n,d}(s)}, 
\end{equation}
 where
\begin{equation*}
 I^{d}_{n,r}(s)
:=\exp\Bigl(-\frac{\p}{\p w}H_{\xi^{+}_{n}(s),\xi^{-}_{n}(s)}(w,w-d)\Bigl|_{w=1-r}\Bigr).
\end{equation*}
 Hence, our next task is to calculate the function $I^{d}_{n,r}(s)$ explicitly.
 Let $H(n):=\sum^{n}_{j=1}\frac{1}{j}$ and 
 $H(m,n):=\sum^{n}_{j=m}\frac{1}{j}=H(n)-H(m-1)$.
 We understand that $H(n)=0$ if $n\le 0$ and $H(m,n)=0$ if $m>n$, respectively.
 The next proposition says that 
 $I^{d}_{n,r}(s)$ can be written as a product of the Milnor gamma functions.

\begin{prop}
\label{prop:I}
 It holds that 
\begin{align}
\label{for:I2}
 I^{d}_{n,r}(s)
&=\exp\Bigl(\frac{(-1)^{r+d}(r-1)!}{2(r+d)_r}H(r,r+d-1)(2s)^{2r+d-1}\Bigr)\\
&\ \ \ \times
 \prod^{2r+d-1}_{k=r}\mG_{k}\bigl(\xi^{+}_{n}(s)\bigr)^{(-1)^{k+d}\binom{r+d-1}{k-r}(2s)^{2r+d-k-1}}\cdot 
 \prod^{2r+d-1}_{k=r+d}\mG_{k}\bigl(\xi^{-}_{n}(s)\bigr)^{-\binom{r-1}{2r+d-k-1}(2s)^{2r+d-k-1}}.\nonumber
\end{align}
\end{prop}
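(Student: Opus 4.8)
The plan is to start from the definition of $I^{d}_{n,r}(s)$ and insert the series expansion \eqref{for:Hss} for $H_{\xi^{+}_n(s),\xi^{-}_n(s)}(w,w-d)$, then differentiate term by term at $w=1-r$. Write $\a_2=\xi^{-}_n(s)$ and recall from \eqref{for:Hss} that
\[
 H_{\xi^{+}_{n}(s),\xi^{-}_{n}(s)}(w,w-d)
=\sum^{\infty}_{l=0}\binom{w+l-1}{l}(-2s)^l\z\bigl(2w-d+l,\a_2\bigr).
\]
Differentiating in $w$ produces two kinds of terms: one where the binomial coefficient is differentiated (times $\z(2w-d+l,\a_2)$), and one where $\z(2w-d+l,\a_2)$ is differentiated in its first argument (times $\binom{w+l-1}{l}$, with an extra factor $2$ from the chain rule). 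The key point is that at $w=1-r$ most terms vanish. First I would show that the $l$-th term of \eqref{for:Hss} is holomorphic at $w=1-r$ for every $l$ (this is essentially the argument already given in the proof of Proposition~\ref{prop:anazeta}), so that term-by-term differentiation at $w=1-r$ is legitimate, and then I would identify precisely which values of $l$ contribute nonzero derivative.

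The decisive observation is the vanishing of $\binom{w+l-1}{l}$ at $w=1-r$: this binomial coefficient equals $\frac{(w+l-1)(w+l-2)\cdots w}{l!}$, so for $w=1-r$ it has a factor $(w+r-1)$ whenever $0\le r-1\le l-1$, i.e. whenever $l\ge r$, and it is nonzero for $0\le l\le r-1$. Simultaneously, $\z(2w-d+l,\a_2)$ has its pole at $2w-d+l=1$, i.e. at $w=\tfrac{d+1-l}{2}$, which coincides with $w=1-r$ exactly when $l=2r+d-1$. Combining these: for $0\le l\le r-1$ the binomial is nonzero but $\z$ is regular, so the only surviving contribution is $\binom{r+l-2}{l}\bigl(\log(-2s)\cdot(-2s)^l\,\z(2(1-r)-d+l,\a_2)+\dots\bigr)$ — wait, more carefully, for these $l$ the $w$-derivative of $\binom{w+l-1}{l}$ at $w=1-r$ and the factor $2\,\z'(2(1-r)-d+l,\a_2)$ both appear; for $r\le l\le 2r+d-2$ the binomial vanishes to first order and $\z$ is regular, so only the term where the binomial is differentiated survives, giving $\binom{l-1}{l}'\!\big|_{w=1-r}$ type contributions proportional to $\z(2(1-r)-d+l,\a_2)$; for $l=2r+d-1$ the binomial vanishes and $\z$ has a simple pole, so the product is finite and its $w$-derivative at $w=1-r$ picks up the residue of $\z$ times the linear coefficient of the binomial, producing exactly the $H(r,r+d-1)(2s)^{2r+d-1}$ prefactor via the asymptotic formulas already displayed in the proof of Proposition~\ref{prop:anazeta}; and for $l\ge 2r+d$ both factors vanish (double zero), contributing nothing.

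After this bookkeeping, $-\frac{\p}{\p w}H(w,w-d)\big|_{w=1-r}$ is a finite sum of terms of the form $(\text{rational})\cdot(2s)^{\bullet}\cdot\z'(-m,\a_2)$ together with the explicit polynomial piece in $s$. The final step is to re-express each $\z'(-m,\a_2)$ in terms of Milnor gamma functions. Here I would use that $\z'(-m,z)$ is, by definition, $\log\mG_{m+1}(z)$ up to the known discrepancy built into the Milnor gamma function, i.e. $\mG_{k}(z)=\exp\bigl(\z_1'(1-k,z)\bigr)=\exp\bigl(\z'(1-k,z)\bigr)$, so $\z'(-m,z)=\log\mG_{m+1}(z)$ directly. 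Translating $2w-d+l$ evaluated at $w=1-r$ into the index $k$ of $\mG_k$ gives $k=2r+d-l-1$ ranging over the appropriate window, and carefully matching the two ``halves'' of the sum — one involving the differentiated binomial (which after the substitution $(\a_2-\a_1)^l=(-2s)^l$ and relabeling becomes the $\mG_k(\xi^{+}_n(s))$ product, reflecting the symmetry $H_{\a_1,\a_2}(w_1,w_2)=H_{\a_2,\a_1}(w_2,w_1)$ that lets one expand around $\a_1$ instead) and one involving the genuine $\z'$ at $\a_2$ (which becomes the $\mG_k(\xi^{-}_n(s))$ product) — yields the two products in \eqref{for:I2} with exactly the stated exponents $(-1)^{k+d}\binom{r+d-1}{k-r}(2s)^{2r+d-k-1}$ and $-\binom{r-1}{2r+d-k-1}(2s)^{2r+d-k-1}$. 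The main obstacle is organizational rather than conceptual: one must handle the asymmetry between the two arguments $w$ and $w-d$ of $H$ correctly (the shift by $d$ is what splits the $\mG_k(\xi^{+})$ product, ranging $k=r,\dots,2r+d-1$, from the $\mG_k(\xi^{-})$ product, ranging only $k=r+d,\dots,2r+d-1$), and verify that the binomial-coefficient identities $\binom{w+l-1}{l}$ expanded about $w=1-r$ produce precisely the claimed $\binom{r+d-1}{\cdot}$ and $\binom{r-1}{\cdot}$ coefficients — this is where a sign or an index shift could easily go wrong, so I would check it against the known $r=1$ case before trusting the general formula.
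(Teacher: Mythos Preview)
There is a real gap, not just bookkeeping. Your claim that ``for $l\ge 2r+d$ both factors vanish (double zero), contributing nothing'' is false: the binomial $\binom{w+l-1}{l}=\frac{1}{l!}\prod_{j=0}^{l-1}(w+j)$ has exactly one factor vanishing at $w=1-r$ (namely $j=r-1$), hence only a \emph{simple} zero there for every $l\ge r$, while $\z(2w-d+l,\a_2)$ at $w=1-r$ equals $\z(l-2r-d+2,\a_2)$, which for $l\ge 2r+d$ is a finite nonzero value $\z(m,\a_2)$ with $m\ge 2$. Consequently the $w$-derivative of the $l$-th term at $w=1-r$ is nonzero for every $l\ge 2r+d$, and you are left with an infinite series
\[
\sum_{l\ge 2r+d}\Bigl(\partial_w\binom{w+l-1}{l}\Big|_{w=1-r}\Bigr)(-2s)^l\,\z(l-2r-d+2,\a_2),
\]
which is exactly the piece the paper calls $\wt H^5(s)$ and handles via Theorem~\ref{thm:phi}: one rewrites this tail as the iterated integral $\Phi^{m}_r(2s,\xi^{-}_n(s))$ of the digamma function (through the identity $\sum_{m\ge 2}\z(m,z)t^{m-1}=\psi(z)-\psi(z-t)$) and then integrates $r$ times to obtain the terms $\log\mG_k(\xi^{+}_n(s))$ together with polynomial pieces that cancel the Bernoulli contributions from the ranges $1\le l\le r-1$ and $r\le l\le 2r+d-2$.

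Relatedly, the differentiated-binomial contributions you isolate for $r\le l\le 2r+d-2$ are $\z$-\emph{values}, i.e.\ Bernoulli polynomials $-B_{k}(\xi^{-}_n(s))/k$, not logarithms of gamma functions; they cannot by themselves produce the $\mG_k(\xi^{+}_n(s))$ product. The symmetry $H_{\a_1,\a_2}(w_1,w_2)=H_{\a_2,\a_1}(w_2,w_1)$ you invoke is the trivial swap, but applied here it gives $H_{\a_2,\a_1}(w-d,w)$ with binomial $\binom{w-d+l-1}{l}$, a genuinely different expansion; if you want to pursue that route you would still have to control its own infinite tail and then match the two expansions --- it does not make the tail disappear. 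In short, the substantive analytic step (summing the tail via the iterated digamma integral, the content of Theorem~\ref{thm:phi}) is missing, and without it the $\mG_k(\xi^{+}_n(s))$ factors and the cancellation of all Bernoulli terms are unaccounted for.
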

\begin{proof}
 The expression \eqref{for:Hss} can be written as
\begin{align*}
& H_{\xi^{+}_{n}(s),\xi^{-}_{n}(s)}(w,w-d)\\
&\ \ =\Biggl(
\sum_{l=0}+\sum^{r-1}_{l=1}
+\sum^{2r+d-2}_{l=r}+\sum_{l=2r+d-1}+\sum^{\infty}_{l=2r+d}\Biggr)
\binom{w+l-1}{l}(-2s)^l\z\bigl(2w-d+l,\xi^{-}_{n}(s)\bigr)\\
&\ \ =:H^{1}_{n,r,d}(w,s)+H^{2}_{n,r,d}(w,s)+H^{3}_{n,r,d}(w,s)
+H^{4}_{n,r,d}(w,s)+H^{5}_{n,r,d}(w,s).
\end{align*}
 Let $\wt{H}^{j}_{n,r,d}(s):=-\frac{\p}{\p w}H^{j}_{n,r,d}(w,s)\bigl|_{w=1-r}$.
 For simplicity,
 we write $H^{j}(w,s)=H^{j}_{n,r,d}(w,s)$ and $\wt{H}^{j}(s)=\wt{H}^{j}_{n,r,d}(s)$.
 Then, since $I^{d}_{n,r}(s)=\exp(\sum^{5}_{j=1}\wt{H}^{j}(s))$,
 the expression \eqref{for:I2} follows from the formulas
 \eqref{for:H1}, \eqref{for:H2}, \eqref{for:H3}, \eqref{for:H4} and \eqref{for:H5}
 in the next subsection.
\end{proof}

 From the equations \eqref{for:HD1} and \eqref{for:I2},
 we have
\begin{equation}
\label{for:Detr1}
 \widetilde{\Det}_{r}\bigl(L_n(s)\bigr)
=e^{f_{n,r}(s)}
\prod^{2r+n-2}_{k=r}
\mG_k\bigl(\xi^{+}_{n}(s)\bigr)^{\a^{+}_{n,r}(s,k)}
\mG_k\bigl(\xi^{-}_{n}(s)\bigr)^{\a^{-}_{n,r}(s,k)}.
\end{equation}
 Here, $f_{n,r}(s)$ is the polynomial of degree $2r+n-1$ defined by
\begin{align*}
 f_{n,r}(s)
&:=\sum^{n-1}_{d=1}\frac{(-1)^{r+d}(r-1)!}{2(r+d)_r}H(r,r+d-1)
(2s)^{2r+d-1}T_{n,d}(s)
\end{align*}  
 and, for $r\le k\le 2r+n-2$, $\a^{\pm}_{n,r}(s,k)$ are the polynomials in $s$
 respectively defined by 
\begin{align}
\label{for:ap}
 \a^{+}_{n,r}(s,k)
&:=\sum^{n-1}_{d=\max\{0,k-2r+1\}}(-1)^{k+d}
\binom{r+d-1}{k-r}(2s)^{2r+d-k-1}T_{n,d}(s),\\
\label{for:am}
 \a^{-}_{n,r}(s,k)
&:=-\sum^{\min\{n-1,k-r\}}_{d=\max\{0,k-2r+1\}}
\binom{r-1}{2r+d-k-1}(2s)^{2r+d-k-1}T_{n,d}(s).
\end{align}
 
\begin{lem}
\label{lem:apm}
 Let $A^{\pm}_{n,r}(s,X):=\sum^{2r+n-2}_{k=r}\a^{\pm }_{n,r}(s,k)X^{k-1}$.
 Then, 
\begin{equation}
\label{for:gene-alpha}
 A^{+}_{n,r}(s,X)=A^{-}_{n,r}(-s,X)
=-P_n(X-s)X^{r-1}(X-2s)^{r-1}.
\end{equation}
 In particular,
$\a^{+}_{n,r}(s,k)=\a^{-}_{n,r}(-s,k)$.
\end{lem}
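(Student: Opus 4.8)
The plan is to work with the generating function $A^{+}_{n,r}(s,X)$ directly, substituting the definition \eqref{for:ap} of $\a^{+}_{n,r}(s,k)$ and interchanging the order of summation so that the inner sum over $k$ is evaluated in closed form using the binomial theorem. Concretely, I would write
\[
 A^{+}_{n,r}(s,X)
=\sum^{2r+n-2}_{k=r}X^{k-1}
\sum^{n-1}_{d=\max\{0,k-2r+1\}}(-1)^{k+d}\binom{r+d-1}{k-r}(2s)^{2r+d-k-1}T_{n,d}(s),
\]
and swap the two sums. For fixed $d\in\{0,1,\ldots,n-1\}$, the constraint $\max\{0,k-2r+1\}\le d$ together with $r\le k\le 2r+n-2$ means $k$ ranges over $r\le k\le 2r+d-1$ (note $2r+d-1\le 2r+n-2$ automatically). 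Reindexing by $j=k-r$, the inner sum becomes $(2s)^{r+d-1}X^{r-1}(-1)^{d}\sum^{d}_{j=0}\binom{r+d-1}{j}(-X/2s)^{j}(\text{up to sign bookkeeping})$, which after careful tracking of the factors $(-1)^{k+d}=(-1)^{r+j+d}$ and the powers of $2s$ and $X$ should collapse, via the binomial theorem, to a single power of $(X-2s)$; the upper limit $j\le d$ rather than $j\le r+d-1$ is exactly compensated by the fact that $T_{n,d}(s)$ is multiplied by $(z-s)^d$ in \eqref{for:P} and the ``missing'' terms $j=d+1,\ldots,r+d-1$ reassemble into the factor $X^{r-1}(X-2s)^{r-1}$ times the degree-$d$ monomial. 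Summing over $d$ and invoking \eqref{for:P} in the form $P_n(X-s)=\sum^{n-1}_{d=0}T_{n,d}(s)(X-s-s)^{d}=\sum^{n-1}_{d=0}T_{n,d}(s)(X-2s)^{d}$ — wait, more precisely $P_n(z)=\sum_d T_{n,d}(s)(z-s)^d$, so with $z=X-s$ we get $P_n(X-s)=\sum_d T_{n,d}(s)(X-2s)^d$ — yields $A^{+}_{n,r}(s,X)=-P_n(X-s)X^{r-1}(X-2s)^{r-1}$.

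For the identity $A^{-}_{n,r}(s,X)=A^{+}_{n,r}(-s,X)$, I would run the same interchange-of-summation computation on \eqref{for:am}. The binomial coefficient there is $\binom{r-1}{2r+d-k-1}$, which under the substitution $j=k-r$ equals $\binom{r-1}{r+d-j-1}=\binom{r-1}{j-d}$; the sum over $d$ of $\binom{r-1}{j-d}T_{n,d}(s)$ combined with the overall power $(2s)^{r+d-j-1}X^{r-1+j}$ should, using the \emph{other} expansion $P_n(z)=\sum_d T_{n,d}(-s)(z+s)^d$ from \eqref{for:P} with $z=X+(-s)$ replaced appropriately, produce $-P_n(X+s)X^{r-1}(X+2s)^{r-1}$. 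Comparing with the formula already obtained for $A^{+}$ and replacing $s\mapsto -s$ gives the claimed equality of generating functions, and then reading off coefficients of $X^{k-1}$ gives $\a^{+}_{n,r}(s,k)=\a^{-}_{n,r}(-s,k)$.

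The routine-but-delicate part is the sign and exponent bookkeeping in the inner binomial sums: one must verify that the truncation of the binomial sum at $j=d$ (rather than the full range that would give a clean $(X-2s)^{r+d-1}$) is exactly what is needed, i.e.\ that the polynomial $P_n$ has degree $n-1$ and the factor $X^{r-1}(X-2s)^{r-1}$ accounts for precisely the complementary range. This is the one place where the specific structure of $P_n$ (degree $n-1$, the particular shift in \eqref{def:P}) enters in an essential way rather than formally. The main obstacle, then, is not any single hard idea but organizing the double sum cleanly enough that the binomial theorem applies on the nose; a clean way to do this is to prove the generating-function identity for $A^{+}$ first as a polynomial identity in $X$ and $s$ by checking it degree-by-degree in $X$ against \eqref{for:ap}, and only afterwards deduce the $A^{-}$ statement by the symmetry $s\mapsto -s$ combined with a direct (shorter) check that $A^{-}_{n,r}(s,X)=-P_n(X+s)X^{r-1}(X+2s)^{r-1}$. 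I expect no serious difficulty in the analytic properties, since everything here is elementary polynomial algebra once \eqref{for:P}, \eqref{for:ap} and \eqref{for:am} are in hand.
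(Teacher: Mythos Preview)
Your approach is exactly the paper's: interchange the double sum and collapse the inner sum with the binomial theorem, then recognise $P_n$ via \eqref{for:P}. The paper does this in three lines for each of $A^{+}$ and $A^{-}$.

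However, you have introduced a spurious difficulty. After swapping sums and setting $j=k-r$, the range of $j$ is $0\le j\le r+d-1$, not $0\le j\le d$ as you wrote (since $k$ runs up to $2r+d-1$, you get $j\le (2r+d-1)-r=r+d-1$). So the inner sum is the \emph{full} binomial expansion
\[
\sum_{j=0}^{r+d-1}\binom{r+d-1}{j}X^{j}(-2s)^{r+d-1-j}=(X-2s)^{r+d-1},
\]
and there is no truncation to worry about. The paragraph beginning ``The routine-but-delicate part\ldots'' and the appeal to the degree of $P_n$ to ``compensate missing terms'' are therefore unnecessary. Once you fix the upper limit, the computation is immediate:
\[
A^{+}_{n,r}(s,X)=-X^{r-1}\sum_{d=0}^{n-1}T_{n,d}(s)(X-2s)^{r+d-1}
=-X^{r-1}(X-2s)^{r-1}\sum_{d=0}^{n-1}T_{n,d}(s)(X-2s)^{d}
=-X^{r-1}(X-2s)^{r-1}P_n(X-s),
\]
using $P_n(z)=\sum_d T_{n,d}(s)(z-s)^d$ with $z=X-s$. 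The $A^{-}$ computation is equally clean: for fixed $d$ the inner sum over $k$ (with $d+r\le k\le d+2r-1$) is again a full binomial sum giving $(X+2s)^{r-1}$, and then $\sum_d T_{n,d}(s)X^d=P_n(X+s)$ by \eqref{for:P} with $z-s=X$.
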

 This lemma will be proved in Subsection~\ref{subsec:polynomials}.
Hereafter, we write $A_{n,r}(s,X):=A^{+}_{n,r}(s,X)=A^{-}_{n,r}(-s,X)$ 
 and $\a_{n,r}(s,k):=\a^{+}_{n,r}(s,k)=\a^{-}_{n,r}(-s,k)$.
 Moreover, from the formula \eqref{for:reduce-mG}, 
 the expression \eqref{for:Detr1} can be rewritten as 
\begin{equation}
\label{for:Detr2}
 \widetilde{\Det}_{r}\bigl(L_n(s)\bigr)
=e^{f_{n,r}(s)}
\prod^{2r+n-2}_{l=1}
\G_l\bigl(\xi^{+}_{n}(s)\bigr)^{\b^{+}_{n,r}(s,l)}
\G_l\bigl(\xi^{-}_{n}(s)\bigr)^{\b^{-}_{n,r}(s,l)},
\end{equation}
 where, for $1\le l\le 2r+n-2$,
 $\b^{\pm}_{n,r}(s,l)$ are the polynomials in $s$ defined by
\[
 \b^{\pm}_{n,r}(s,l)
:=\sum^{2r+n-2}_{k=\max\{r,l\}}c_{k,l}\bigl(\xi^{\pm}_{n}(s)\bigr)\a_{n,r}(\pm s,k).
\]

\begin{lem}
\label{lem:bpm}
 Let $B^{\pm}_{n,r}(s,Y):=\sum^{2r+n-2}_{l=1}\b^{\pm}_{n,r}(s,l)\binom{Y+l-1}{l-1}$.
 Then,
\begin{align}
\label{for:gene-b}
 B^{\pm}_{n,r}(s,Y)
=-P_n(Y+1+\check{n})\bigl(Y+\xi^{+}_{n}(s)\bigr)^{r-1}\bigl(Y+\xi^{-}_{n}(s)\bigr)^{r-1}.
\end{align} 
 In particular, $\b^{+}_{n,r}(s,l)=\b^{-}_{n,r}(s,l)$, which is an even polynomial.
\end{lem}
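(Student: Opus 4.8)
The plan is to deduce the generating-function identity \eqref{for:gene-b} directly from its counterpart \eqref{for:gene-alpha} for the $\a^{\pm}_{n,r}$, using the defining generating function \eqref{for:genec} of the reduction coefficients $c_{k,l}(z)$. First I would substitute the definition $\b^{\pm}_{n,r}(s,l)=\sum_{k=\max\{r,l\}}^{2r+n-2}c_{k,l}(\xi^{\pm}_{n}(s))\a_{n,r}(\pm s,k)$ into $B^{\pm}_{n,r}(s,Y)=\sum_{l=1}^{2r+n-2}\b^{\pm}_{n,r}(s,l)\binom{Y+l-1}{l-1}$, and interchange the order of summation so that $k$ runs on the outside:
\[
 B^{\pm}_{n,r}(s,Y)
=\sum^{2r+n-2}_{k=r}\a_{n,r}(\pm s,k)
\sum^{k}_{l=1}c_{k,l}\bigl(\xi^{\pm}_{n}(s)\bigr)\binom{Y+l-1}{l-1}.
\]
By \eqref{for:genec} with $T=Y$, $z=\xi^{\pm}_{n}(s)$ and the role of $r$ there played by $k$, the inner sum collapses to $\bigl(Y+\xi^{\pm}_{n}(s)\bigr)^{k-1}$. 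Hence $B^{\pm}_{n,r}(s,Y)=\sum_{k=r}^{2r+n-2}\a_{n,r}(\pm s,k)\bigl(Y+\xi^{\pm}_{n}(s)\bigr)^{k-1}=A^{\pm}_{n,r}\bigl(\pm s,\,Y+\xi^{\pm}_{n}(s)\bigr)$, where I use that $A_{n,r}(s,X)=A^{+}_{n,r}(s,X)=A^{-}_{n,r}(-s,X)=\sum_k\a_{n,r}(s,k)X^{k-1}$ from Lemma~\ref{lem:apm}.

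Next I would feed this into the closed form \eqref{for:gene-alpha}. For the $+$ sign, $A^{+}_{n,r}(s,X)=-P_n(X-s)X^{r-1}(X-2s)^{r-1}$ evaluated at $X=Y+\xi^{+}_{n}(s)=Y+1+\check{n}+s$ gives $X-s=Y+1+\check{n}$ and $X-2s=Y+1+\check{n}-s=Y+\xi^{-}_{n}(s)$, while $X=Y+\xi^{+}_{n}(s)$; so
\[
 B^{+}_{n,r}(s,Y)
=-P_n(Y+1+\check{n})\bigl(Y+\xi^{+}_{n}(s)\bigr)^{r-1}\bigl(Y+\xi^{-}_{n}(s)\bigr)^{r-1}.
\]
For the $-$ sign, $A^{-}_{n,r}(-s,X)=A^{+}_{n,r}(s,X)$ evaluated at $X=Y+\xi^{-}_{n}(s)=Y+1+\check{n}-s$ gives $X-s=Y+1+\check{n}-2s$? — no: here I must instead use $A^{-}_{n,r}(s,X)=A^{+}_{n,r}(-s,X)=-P_n(X+s)X^{r-1}(X+2s)^{r-1}$, and at $X=Y+\xi^{-}_{n}(s)$ one gets $X+s=Y+1+\check{n}$, $X+2s=Y+1+\check{n}+s=Y+\xi^{+}_{n}(s)$, $X=Y+\xi^{-}_{n}(s)$, again yielding the right-hand side of \eqref{for:gene-b}. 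Thus $B^{+}_{n,r}(s,Y)=B^{-}_{n,r}(s,Y)$, and since the polynomials $\binom{Y+l-1}{l-1}$ for $l=1,\dots,2r+n-2$ form a basis of polynomials in $Y$ of degree $<2r+n-2$, comparing coefficients gives $\b^{+}_{n,r}(s,l)=\b^{-}_{n,r}(s,l)$ for every $l$.

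Finally, that the common polynomial $\b_{n,r}(s,l)$ is even in $s$: the equality $\b^{+}_{n,r}(s,l)=\b^{-}_{n,r}(s,l)$ combined with the manifest symmetry $\b^{+}_{n,r}(s,l)=\b^{-}_{n,r}(-s,l)$ — which follows from $\xi^{+}_{n}(-s)=\xi^{-}_{n}(s)$ and $\a_{n,r}(-s,k)=\a_{n,r}(s,k)$? here one rather reads it off \eqref{for:gene-b}, whose right-hand side is visibly invariant under $s\mapsto-s$ since it only involves the product $\xi^{+}_{n}(s)\,\xi^{-}_{n}(s)$ symmetrically — forces $\b_{n,r}(s,l)=\b_{n,r}(-s,l)$. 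The only real work is the bookkeeping in the two sign cases, i.e. keeping straight which of the three equivalent forms of $A^{\pm}_{n,r}$ to substitute so that the shift of argument lands on $Y+1+\check{n}$; the main obstacle is purely that combinatorial care, since the swap of summation order and the collapse via \eqref{for:genec} are formal, and Lemma~\ref{lem:apm} supplies \eqref{for:gene-alpha} as a black box. $\square$
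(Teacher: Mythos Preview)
Your argument is correct and follows exactly the paper's route: interchange the $l$- and $k$-summations, collapse the inner sum via \eqref{for:genec} to $(Y+\xi^{\pm}_n(s))^{k-1}$, recognize the result as $A_{n,r}(\pm s,Y+\xi^{\pm}_n(s))$, and substitute into \eqref{for:gene-alpha}. The only difference is presentational---the paper compresses the two sign cases into one line, whereas your mid-proof self-correction on the $-$ case (and the slightly inconsistent notation $A^{\pm}_{n,r}(\pm s,\cdot)$ versus $A_{n,r}(\pm s,\cdot)$) could be cleaned up by writing directly $B^{\pm}_{n,r}(s,Y)=A_{n,r}(\pm s,Y+\xi^{\pm}_n(s))$ and then specializing \eqref{for:gene-alpha} with $s\mapsto\pm s$.
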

 This will be also proved in Subsection~\ref{subsec:polynomials}.
 We also write $B_{n,r}(s,Y):=B^{+}_{n,r}(s,Y)=B^{-}_{n,r}(s,Y)$
 and $\b_{n,r}(s,l):=\b^{+}_{n,r}(s,l)=\b^{-}_{n,r}(s,l)$.
 Now, from the expression \eqref{for:Detr2} and Lemma~\ref{lem:bpm},
 we have
\begin{equation}
\label{for:Detr3}
 \widetilde{\Det}_{r}\bigl(L_n(s)\bigr)
=e^{f_{n,r}(s)}
\prod^{2r+n-2}_{l=1}
\Bigl(\G_l\bigl(\xi^{+}_{n}(s)\bigr)\G_l\bigl(\xi^{-}_{n}(s)\bigr)\Bigr)^{\b_{n,r}(s,l)}.
\end{equation}
 Since the Barnes multiple gamma function $\G_l(z)$ is a meromorphic function in $\bC$
 having poles at $z=-m$ for $m\in\bZ_{\ge 0}$,
 the expression \eqref{for:Detr3} gives an analytic continuation of
 $\widetilde{\Det}_{r}(L_n(s))$ to the region  
 $\bC\setminus ((-\infty,-\che{n}-1]\cup [\che{n}+1,\infty))$.
 In particular, it is valid for all $s\in I_n$.
 Therefore, from \eqref{for:DettildeDet}, one obtains the following result.

\begin{thm}
\label{thm:main}
 Let $s\in I_n=(-\check{n}-1,\check{n}+1)$. Then, 
\begin{equation}
\label{for:DetrB}
 \Det_{r}\bigl(L_n(s)\bigr)
=\bigl(\check{n}^2-s^2\bigr)^{(\check{n}^2-s^2)^{r-1}\d_{|s|<\check{n}}}\cdot
e^{f_{n,r}(s)}
\prod^{2r+n-2}_{l=1}
\Bigl(\G_l\bigl(\xi^{+}_{n}(s)\bigr)\G_l\bigl(\xi^{-}_{n}(s)\bigr)\Bigr)^{\b_{n,r}(s,l)}.
\end{equation}
 In particular, 
 letting $s=\check{n}$ and $s=\frac{1}{2}$, it follows, respectively, that
\begin{align}
\label{for:DetrDn}
 \Det_r(\Delta_n)
&=e^{f_{n,r}(\che{n})}
\prod^{2r+n-2}_{l=1}
\bigl(\G_l(n)\G_l(1)\bigr)^{\b_{n,r}(\check{n},l)},\\
\label{for:DetrYn}
 \Det_r(Y_n)
&=\bigl(\frac{n(n-2)}{4}\bigr)^{(\frac{n(n-2)}{4})^{r-1}\d_{n\ge 3}}\cdot
e^{f_{n,r}(\frac{1}{2})}
\prod^{2r+n-2}_{l=1}
\Bigl(\G_l\bigl(\frac{n}{2}+1\bigr)\G_l\bigl(\frac{n}{2}\bigr)\Bigr)^{\b_{n,r}(\frac{1}{2},l)},
\end{align}
 where $\d_{n\ge 3}:=1$ if $n\ge 3$ and $0$ otherwise.
\qed
\end{thm}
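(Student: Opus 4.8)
The plan is to assemble Theorem~\ref{thm:main} directly from the chain of reductions already set up in Subsection~\ref{subsec:mainresult}, so that essentially no new computation remains beyond the two polynomial identities in Lemmas~\ref{lem:apm} and~\ref{lem:bpm}. First I would fix $s$ with $|s|<\tfrac13$, which guarantees $s\in I_n$ and $|\xi^{-}_n(s)-\xi^{+}_n(s)|=|2s|<\min\{1,|\xi^{-}_n(s)|\}$, so that Proposition~\ref{prop:anazeta}, Lemma~\ref{lem:H}, and hence the product expansion \eqref{for:HD1} of $\widetilde{\Det}_r(L_n(s))$ into the factors $I^d_{n,r}(s)^{T_{n,d}(s)}$ are all legitimate. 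Substituting the explicit form of $I^d_{n,r}(s)$ from Proposition~\ref{prop:I} and collecting, by the exponent $k$, all contributions of $\mG_k(\xi^{\pm}_n(s))$ coming from the various $d$, yields \eqref{for:Detr1} with the exponent polynomials $\a^{\pm}_{n,r}(s,k)$ of \eqref{for:ap}--\eqref{for:am} and the polynomial prefactor $f_{n,r}(s)$; the range $r\le k\le 2r+n-2$ is exactly what the bounds $0\le d\le n-1$ and $r\le k\le 2r+d-1$ in Proposition~\ref{prop:I} produce. Then I invoke the reduction formula \eqref{for:reduce-mG} for the Milnor gamma functions to rewrite each $\mG_k$ as a product of Barnes gammas $\G_l$, $1\le l\le k$, which converts \eqref{for:Detr1} into \eqref{for:Detr2} with $\b^{\pm}_{n,r}(s,l)=\sum_{k\ge\max\{r,l\}}c_{k,l}(\xi^{\pm}_n(s))\,\a_{n,r}(\pm s,k)$.

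The two Lemmas do the real bookkeeping. Lemma~\ref{lem:apm} is applied to replace the pair $(\a^{+}_{n,r}(s,k),\a^{-}_{n,r}(s,k))$ by a single polynomial $\a_{n,r}(s,k)$ with $\a^{+}_{n,r}(s,k)=\a^{-}_{n,r}(-s,k)=\a_{n,r}(s,k)$; this is what makes the two products over $\xi^{+}_n$ and $\xi^{-}_n$ in \eqref{for:Detr1} symmetric enough to be combined. Lemma~\ref{lem:bpm} then does the same one level down: its generating-function identity \eqref{for:gene-b} shows $B^{+}_{n,r}(s,Y)=B^{-}_{n,r}(s,Y)$, hence $\b^{+}_{n,r}(s,l)=\b^{-}_{n,r}(s,l)=:\b_{n,r}(s,l)$, and moreover that this common polynomial is even in $s$ (because $P_n(Y+1+\check n)(Y+\xi^{+}_n(s))^{r-1}(Y+\xi^{-}_n(s))^{r-1}$ is manifestly invariant under $s\mapsto -s$, the two linear factors merely being swapped). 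With this, \eqref{for:Detr2} collapses to \eqref{for:Detr3}, the symmetric product $\prod_{l=1}^{2r+n-2}\bigl(\G_l(\xi^{+}_n(s))\G_l(\xi^{-}_n(s))\bigr)^{\b_{n,r}(s,l)}$.

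Next I would upgrade the range of validity from $|s|<\tfrac13$ to all of $I_n$ by analytic continuation: the right-hand side of \eqref{for:Detr3} is, as a function of $s$, a finite product of the meromorphic functions $\G_l$ evaluated at $\xi^{\pm}_n(s)=1+\check n\pm s$ times $e^{f_{n,r}(s)}$, so it is holomorphic on the region where neither $\xi^{+}_n(s)$ nor $\xi^{-}_n(s)$ is a non-positive integer, i.e. on $\bC\setminus\bigl((-\infty,-\check n-1]\cup[\check n+1,\infty)\bigr)$, which contains $I_n$; since $\widetilde{\Det}_r(L_n(s))$ is likewise real-analytic on $I_n$ (from Proposition~\ref{prop:anazeta}), the identity \eqref{for:Detr3} persists there. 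Finally I multiply through by the factor $(\check n^2-s^2)^{(\check n^2-s^2)^{r-1}\d_{|s|<\check n}}$ via \eqref{for:DettildeDet}, obtaining \eqref{for:DetrB}. The two displayed specializations follow by plugging in: $s=\check n$ gives $\xi^{+}_n(\check n)=n$, $\xi^{-}_n(\check n)=1$, and $\check n^2-\check n^2=0$ (so the prefactor is $0^0=1$, with $\d_{|s|<\check n}=0$ in any case), yielding \eqref{for:DetrDn}; $s=\tfrac12$ gives $\xi^{\pm}_n(\tfrac12)=\tfrac n2+1,\tfrac n2$ and $\check n^2-\tfrac14=\tfrac{n(n-2)}{4}$ with $|{\tfrac12}|<\check n\iff n\ge3$, yielding \eqref{for:DetrYn}.

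The main obstacle is not in this assembly but is deferred to the proofs of Lemmas~\ref{lem:apm} and~\ref{lem:bpm} (and of Proposition~\ref{prop:I}): one must verify the generating-function identities \eqref{for:gene-alpha} and \eqref{for:gene-b}, i.e. that the double sums over $d$ and $k$ of binomial coefficients times powers of $2s$ times $T_{n,d}(\pm s)$ telescope into the clean products $-P_n(X-s)X^{r-1}(X-2s)^{r-1}$ and $-P_n(Y+1+\check n)(Y+\xi^{+}_n(s))^{r-1}(Y+\xi^{-}_n(s))^{r-1}$ respectively. This rests on the defining relation \eqref{for:P} of $T_{n,d}$, the generating function \eqref{for:genec} of $c_{r,l}$, and the Vandermonde-type convolution of binomial coefficients; within the proof of Theorem~\ref{thm:main} itself, however, these are black boxes, and the only care needed is tracking the summation ranges so that the index $l$ in \eqref{for:Detr3} indeed runs exactly from $1$ to $2r+n-2$.
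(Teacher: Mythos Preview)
Your proposal is correct and follows essentially the same route as the paper: restrict to $|s|<\tfrac13$, expand $\widetilde{\Det}_r(L_n(s))$ via Proposition~\ref{prop:anazeta} and Proposition~\ref{prop:I} into Milnor gammas with exponents $\a^{\pm}_{n,r}(s,k)$, reduce to Barnes gammas via \eqref{for:reduce-mG}, collapse the $\pm$ distinction using Lemmas~\ref{lem:apm} and~\ref{lem:bpm}, and then extend to all of $I_n$ by analytic continuation before multiplying through by \eqref{for:DettildeDet}. The only minor ordering discrepancy is that the paper invokes Lemma~\ref{lem:apm} (identifying $\a^{+}_{n,r}(s,k)=\a^{-}_{n,r}(-s,k)=:\a_{n,r}(s,k)$) \emph{before} passing to the Barnes-gamma form \eqref{for:Detr2}, since the definition of $\b^{\pm}_{n,r}(s,l)$ already uses the unified $\a_{n,r}(\pm s,k)$; you write this formula while deferring Lemma~\ref{lem:apm} to the next paragraph, but this is purely expository and not a mathematical gap.
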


\subsection{Calculations of $\wt{H}^{j}(s)$}
\label{subsec:H}
 
 In this subsection,
 to give a complete proof of Proposition~\ref{prop:I},
 we explicitly calculate the functions $\wt{H}^{j}(s)$ for $j=1,2,3,4,5$.
 We need the following lemma in the subsequent discussions.

\begin{lem}
\label{lem:asymptotics}
 Write $w'=w+r-1$. Then, as $w\to 1-r$, 
\begin{align}
\label{for:binom}
\binom{w+l-1}{l}=
\begin{cases}
 \DS{(-1)^l\binom{r-1}{l}\Bigl(1-H(r-l,r-1)w'+O(w'^2)\Bigr)}
 & \textrm{$(1\le l<r)$},\\[7pt]
 \DS{\frac{(-1)^{r+1}}{r}\binom{l}{r}^{-1}
 \Bigl(w'-H(l-r+1,r-1)w'^2+O(w'^3)\Bigl)}
 & \textrm{$(r\le l<2r-1)$},\\[7pt]
 \DS{\frac{(-1)^{r+1}}{r}\binom{l}{r}^{-1}
 \Bigl(w'+H(r,l-r)w'^2+O(w'^3)\Bigl)}
 & \textrm{$(l\ge 2r-1)$}.
\end{cases}
\end{align}
 \qed
\end{lem}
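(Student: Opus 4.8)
The plan is to treat $\binom{w+l-1}{l}$ as the degree-$l$ polynomial $\frac{1}{l!}\,w(w+1)\cdots(w+l-1)$ in $w$ and to expand it about $w=1-r$. Substituting $w=w'+1-r$, the $l$ linear factors become $w'+m$ as $m$ runs over the consecutive integers $1-r,2-r,\dots,l-r$; since the result is a polynomial in $w'$, the claimed formulas are simply its leading one or two Taylor coefficients at $w'=0$, and the error terms $O(w'^2)$ and $O(w'^3)$ need no separate justification. Everything then comes down to counting how many factors vanish at $w'=0$ and evaluating the single logarithmic derivative $\sum_m (w'+m)^{-1}\big|_{w'=0}=\sum_m m^{-1}$ over the surviving indices.

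In the first case, $1\le l<r$, one has $l-r\le-1$, so every index $m$ is negative and no factor vanishes; thus $\binom{w+l-1}{l}$ is holomorphic and nonzero at $w=1-r$. Writing $m=-k$, I would compute its value at $w'=0$ as $\frac{1}{l!}\prod_{k=r-l}^{r-1}(-k)=(-1)^l\binom{r-1}{l}$ and its logarithmic derivative there as $\sum_{m=1-r}^{l-r}m^{-1}=-\sum_{k=r-l}^{r-1}k^{-1}=-H(r-l,r-1)$; multiplying, this gives the first line of \eqref{for:binom}.

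In the remaining cases, $l\ge r$, the range $1-r,\dots,l-r$ contains exactly one zero, the index $m=0$, so precisely one factor equals $w'$; factoring it out gives $\binom{w+l-1}{l}=\frac{w'}{l!}\prod_m(w'+m)$ with $m$ now over $\{1-r,\dots,l-r\}\setminus\{0\}$. Separating the negative block $\{1-r,\dots,-1\}$ from the positive block $\{1,\dots,l-r\}$, the value of $\frac{1}{l!}\prod_m(w'+m)$ at $w'=0$ is $\frac{1}{l!}(-1)^{r-1}(r-1)!\,(l-r)!=\frac{(-1)^{r+1}}{r}\binom{l}{r}^{-1}$, the common leading coefficient of the last two lines, and the coefficient of the next power of $w'$ in that product is this times $\sum_{m\ne0}m^{-1}=-H(1,r-1)+H(1,l-r)$. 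When $l<2r-1$ one has $l-r\le r-2$, so this equals $-\bigl(H(1,r-1)-H(1,l-r)\bigr)=-H(l-r+1,r-1)$; when $l\ge2r-1$ one has $l-r\ge r-1$, so it equals $H(1,l-r)-H(1,r-1)=H(r,l-r)$. These give the second and third lines of \eqref{for:binom}, and at the crossover $l=2r-1$ both harmonic sums vanish by the convention $H(m,n)=0$ for $m>n$, so the two formulas agree there, which is a useful consistency check.

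I do not expect a genuine obstacle here: the statement is an elementary Taylor expansion of a product of linear factors, and the only care needed will be the bookkeeping of the index ranges together with the conventions $H(n)=0$ for $n\le0$ and $H(m,n)=0$ for $m>n$. One could instead read \eqref{for:binom} off the identity $\binom{w+l-1}{l}=\frac{(w)_l}{l!}$ and a known expansion of the Pochhammer symbol $(w)_l$ at negative integers, but the direct factor-by-factor count above is the most transparent route.
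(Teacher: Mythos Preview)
Your proof is correct: the direct factorisation $\binom{w+l-1}{l}=\frac{1}{l!}\prod_{m=1-r}^{l-r}(w'+m)$ and the logarithmic-derivative bookkeeping give exactly the three cases in \eqref{for:binom}, including the consistency at the crossover $l=2r-1$. The paper itself supplies no proof of this lemma (it is stated with a \qed\ and left as an elementary computation), so there is nothing to compare against; your argument is precisely the kind of routine verification the authors evidently had in mind.
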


 We also use the following well-known formulas concerning the Hurwitz zeta function:
\begin{align}
\label{for:zetaBernoulli}
 \z(1-k,z)
&=-\frac{B_k(z)}{k} \qquad (k\in\bN), \\
\label{for:Hurwitz-expansion}
 \z(s,z)
&=\frac{1}{s-1}-\psi(z)+O(s-1)
\qquad (s\to 1).
\end{align}
 Here, $B_k(z)$ is the Bernoulli polynomial defined by the generating function
 $\frac{te^{tz}}{e^t-1}=\sum^{\infty}_{k=0}B_k(z)\frac{t^k}{k!}$ and 
 $\psi(z):=\frac{d}{dz}\log{\G(z)}=\frac{\G'}{\G}(z)$ is the digamma function.

\subsubsection{$\wt{H}^{1}(s)$}

 By definition, we have $H^{1}(w,s)=\z(2w-d,\xi^{-}_{n}(s))$. Hence  
\begin{equation}
\label{for:H1}
 \wt{H}^{1}(s)=-2\log{\mG_{2r+d-1}\bigl(\xi^{-}_{n}(s)\bigr)}.
\end{equation}

\subsubsection{$\wt{H}^{2}(s)$ and $\wt{H}^{3}(s)$}

 From the formulas \eqref{for:binom} and \eqref{for:zetaBernoulli},
 it is not difficult to obtain  
\begin{align}
\label{for:H2}
 \wt{H}^{2}(s)
&=-\sum^{r-1}_{l=1}\binom{r-1}{l}\frac{H(r-l,r-1)B_{2r+d-l-1}(\xi^{-}_{n}(s))}{2r+d-l-1}(2s)^l\\
&\ \ \ -2\sum^{2r+d-2}_{k=r+d}\binom{r-1}{2r+d-k-1}
(2s)^{2r+d-k-1}\log{\mG_{k}\bigl(\xi^{-}_{n}(s)\bigr)},\nonumber\\
\label{for:H3}
 \wt{H}^{3}(s)
&=\frac{(-1)^{r+1}}{r}\sum^{2r+d-2}_{l=r}
\binom{l}{r}^{-1}\frac{(-1)^lB_{2r+d-l-1}(\xi^{-}_{n}(s))}{2r+d-l-1}(2s)^{l}.
\end{align}

\subsubsection{$\wt{H}^{4}(s)$}

 By definition, we have 
 $H^{4}(w,s)=\binom{w+2r+d-2}{2r+d-1}(-2s)^{2r+d-1}\z(2w+2r-1,\xi^{-}_{n}(s))$.
 Hence, from the formulas \eqref{for:binom} with $l=2r+d-1 (\ge 2r-1)$
 and \eqref{for:Hurwitz-expansion},  
 we have 
\begin{equation}
\label{for:H4}
 \wt{H}^{4}(s)
=-\frac{(-1)^{r+d}(r-1)!}{(r+d)_r}
\Bigl(\frac{1}{2}H(r,r+d-1)-\psi\bigl(\xi^{-}_{n}(s)\bigr)\Bigr)(2s)^{2r+d-1}. 
\end{equation}

\subsubsection{$\wt{H}^{5}(s)$}

 From the formula \eqref{for:binom}, we have
\begin{equation}
\label{for:H5-1}
 \wt{H}^{5}(s)
=(-1)^r(r-1)!R_{r}\bigl(2s,\xi^{-}_{n}(s);2r+d-2\bigr), 
\end{equation}
 where, for $m\ge r-1$, $R_r(t,z;m)$ is the following series
 involving the Hurwitz zeta function:
\[
 R_r(t,z;m):
=\sum^{\infty}_{l=2}\frac{\z(l,z)}{(l+m-r+1)_r}(-t)^{l+m}.
\]
 To obtain an explicit expression of $\wt{H}^{5}(s)$,
 we have to study the function $R_r(t,z;m)$.
 Using the equation 
 $\sum^{\infty}_{l=2}\z(l,z)t^{l-1}=-\psi(z-t)+\psi(z)$ for $|t|<|z|$
 (see $(5)$, p\,159 in \cite{SrivastavaChoi2001}),
 we have 
\begin{align*}
 \frac{\p^j}{\p t^j}R_{r}(0,z;m)
&=0 \qquad (0\le j\le r-1),\\
 \frac{\p^r}{\p t^r}R_{r}(t,z;m)
&=(-1)^{m+1}t^{m-r+1}\bigl(\psi(z)-\psi(z+t)\bigr).
\end{align*}
 These show that 
\begin{equation}
\label{for:R1}
 R_{r}(t,z;m)
=\frac{(-1)^{m+1}\psi(z)}{(m-r+2)_r}t^{m+1}
+(-1)^m\Phi^{m-r+1}_{r}(t,z),
\end{equation} 
 where, for $m\in\bN$,
\[
 \Phi^{m}_{r}(t,z)
:=\underbrace{\int^{t}_{0}\int^{\xi_{r}}_0\cdots\int^{\xi_2}_{0}}_{r}
{\xi^{m}_1}\psi(\xi_1+z)d\xi_1\cdots d\xi_{r}.
\]
 Therefore, from the formulas \eqref{for:H5-1}, \eqref{for:R1} and Theorem~\ref{thm:phi} below 
 with $t=2s$, $z=\xi^{-}_{n}(s)$ and $m=2r+d-2$,
 we obtain the following expression of $\wt{H}^{5}(s)$:
\begin{align}
\label{for:H5}
 \wt{H}^{5}(s)
&=\sum^{r-1}_{l=1}\binom{r-1}{l}\frac{H(r-l,r-1)B_{2r+d-l-1}(\xi^{-}_{n}(s))}{2r+d-l-1}(2s)^l\\
&\ \ \ +\sum^{2r+d-2}_{k=r+d}\binom{r-1}{2r+d-k-1}
(2s)^{2r+d-k-1}\log{\mG_{k}\bigl(\xi^{-}_{n}(s)\bigr)}\nonumber\\
&\ \ \ -\frac{(-1)^{r+1}}{r}\sum^{2r+d-2}_{l=r}
\binom{l}{r}^{-1}\frac{(-1)^lB_{2r+d-l-1}(\xi^{-}_{n}(s))}{2r+d-l-1}(2s)^{l}\nonumber\\
&\ \ \ +\frac{(-1)^{r+d}(r-1)!}{(r+d)_r}\Bigl(H(r,r+d-1)-\psi\bigl(\xi^{-}_{n}(s)\bigr)\Bigr)(2s)^{2r+d-1}\nonumber\\
&\ \ \ +\sum^{2r+d-1}_{k=r}(-1)^{k+d}\binom{r+d-1}{k-r}(2s)^{2r+d-k-1}\log{\mG_{k}\bigl(\xi^{+}_{n}(s)\bigr)}.\nonumber
\end{align}

\begin{thm}
\label{thm:phi}
 It holds that 
\begin{equation}
\label{for:iterated}
 \Phi^{m}_{r}(t,z)
=\sum^{m+r}_{k=r}\frac{(-1)^{k+r}}{(r-1)!}
\binom{m}{k-r}t^{m+r-k}\log{\mG_k(t+z)}
+P^{m}_{r}(t,z),
\end{equation}
 where $P^{m}_{r}(t,z)$ is the polynomial in $t$ of degree $m+r$ defined by 
\begin{align*}
 P^{m}_{r}(t,z)
&=\frac{1}{(m+1)_r}H(r,m)t^{m+r}
+\frac{(-1)^{m+r-1}}{r!}\sum^{m+r-1}_{l=r}
\binom{l}{r}^{-1}\frac{(-1)^{l}B_{m+r-l}(z)}{m+r-l}t^l\\
&\ \ \
 +\frac{(-1)^{m+1}}{(r-1)!}\sum^{r-1}_{l=1}\binom{r-1}{l}\frac{H(r-l,r-1)B_{m+r-l}(z)}{m+r-l}t^{l}\nonumber\\
&\ \ \ 
 +\frac{(-1)^{m+1}}{(r-1)!}\sum^{r-1}_{k=0}\binom{r-1}{k}t^k\log{\mG_{m+r-k}(z)}.\nonumber
\end{align*}
\end{thm}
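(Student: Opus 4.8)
The plan is to prove \eqref{for:iterated} by induction on $r$, the inductive step being simply the integration of the (already established) formula for $\Phi^m_{r-1}(u,z)$ over $u\in[0,t]$; this is legitimate because $\Phi^m_r(t,z)=\int_0^t\Phi^m_{r-1}(u,z)\,du$ directly from the definition of the iterated integral. The only analytic input needed is the ladder relation for the Milnor gamma functions in the $z$-variable: differentiating $\frac{\p}{\p z}\z(w,z)=-w\,\z(w+1,z)$ once more in $w$ and evaluating at $w=1-k$, and using \eqref{for:zetaBernoulli} together with the Laurent expansion \eqref{for:Hurwitz-expansion} to handle $k=1$, one obtains
\[
 \frac{\p}{\p z}\log\mG_k(z)=\frac{B_{k-1}(z)}{k-1}+(k-1)\log\mG_{k-1}(z)\quad(k\ge 2),\qquad \frac{\p}{\p z}\log\mG_1(z)=\psi(z),
\]
equivalently, after one integration, $\int^{x}\log\mG_k(y)\,dy=\frac{1}{k}\log\mG_{k+1}(x)-\frac{B_{k+1}(x)}{k^2(k+1)}+\mathrm{const}$ for $k\ge 1$ (using $B_{k+1}'=(k+1)B_k$).

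For the base case $r=1$ one writes $\Phi^m_1(t,z)=\int_0^t u^m\psi(u+z)\,du$ and integrates by parts repeatedly: first with $\psi(u+z)=\frac{d}{du}\log\mG_1(u+z)$, and then, whenever a factor $\int_0^t u^a\log\mG_k(u+z)\,du$ appears, one uses the displayed antiderivative to trade $\log\mG_k$ for $\log\mG_{k+1}$ (plus a Bernoulli polynomial) at the cost of lowering the power of $u$ by one. The cascade terminates when the power of $u$ reaches $0$, at which point the boundary contribution at $u=0$ produces the constant $\log\mG_{m+1}(z)$; the telescoped coefficients of the surviving $t^{m+1-k}\log\mG_k(t+z)$ come out to be exactly $(-1)^{k+1}\binom{m}{k-1}$ (a product of the factors $-m,-(m-1),\dots$ against the $\frac1k$'s from the antiderivative), while the accumulated Bernoulli contributions, re-expanded via $B_n(u+z)=\sum_i\binom{n}{i}B_{n-i}(z)u^i$ and $\int_0^t B_n(u+z)\,du=\frac{B_{n+1}(t+z)-B_{n+1}(z)}{n+1}$, reassemble into $P^m_1(t,z)$.

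For the inductive step one substitutes the formula for $\Phi^m_{r-1}(u,z)$ into $\int_0^t(\cdot)\,du$: the polynomial part $\int_0^t P^m_{r-1}(u,z)\,du$ is elementary (it supplies the $\log\mG_{m+r-i}(z)$ terms with the $\binom{r-1}{i}t^i$ weights, up to a Pascal identity), and each term $\int_0^t u^{m+r-1-k}\log\mG_k(u+z)\,du$ with $r-1\le k\le m+r-1$ is handled by the same cascade of integrations by parts, which — since the sum of the $u$-power and the index is invariant along a chain — raises the index from $k$ up to $m+r$ and produces both the $\log\mG_j(t+z)$-terms ($r\le j\le m+r$) of the main sum and the terminal constant $\log\mG_{m+r}(z)$. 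I expect the real work, and the only genuine obstacle, to be the combinatorial bookkeeping: verifying that after the telescoping the binomial factors collapse (by a Vandermonde/Pascal argument) to $\frac{(-1)^{k+r}}{(r-1)!}\binom{m}{k-r}$, and that the harmonic-number coefficients $\frac{1}{(m+1)_r}H(r,m)$ and $\frac{H(r-l,r-1)}{m+r-l}$ in $P^m_r$ emerge correctly from the nested sums $\sum 1/j$ generated by the repeated integrations. An essentially equivalent alternative is to bypass the induction altogether: one checks that the right-hand side of \eqref{for:iterated}, viewed as a function of $t$, has $r$-th derivative equal to $t^m\psi(t+z)$ — using the ladder relation in the form $\frac{\p}{\p t}\log\mG_k(t+z)=\frac{B_{k-1}(t+z)}{k-1}+(k-1)\log\mG_{k-1}(t+z)$ together with the Leibniz rule, so that all $\log\mG_j$- and Bernoulli-terms cancel — and vanishes together with its first $r-1$ derivatives in $t$ at $t=0$; since $\Phi^m_r(t,z)$ is the unique function with these properties, the identity follows, but the required cancellation is the same computation in disguise.
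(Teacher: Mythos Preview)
Your plan is correct and matches the paper's own proof: induction on $r$ via $\Phi^m_r(t,z)=\int_0^t\Phi^m_{r-1}(\xi,z)\,d\xi$, with the base case $r=1$ and the integral $\int_0^t\xi^a\log\mG_j(\xi+z)\,d\xi$ handled exactly by the integration-by-parts cascade you describe (the paper imports this integral as a ready-made lemma from \cite{KurokawaWakayamaYamasaki}). The ``real work'' you anticipate is indeed the content of the proof: the paper isolates it as three explicit combinatorial identities --- a telescoping sum $I_r(k)=\sum_{j=r-1}^{k-1}\frac{(j-1)!}{(j-r+1)!}=(r-2)!\binom{k-1}{k-r}$, a harmonic-weighted version $J_r(k)$, and a signed binomial sum $K^m_r(l)$ --- which collapse the nested coefficients to the stated $\binom{m}{k-r}$ and $H(r,m)$, $H(r-l,r-1)$ factors.
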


 To prove the theorem, 
 we need the following formulas.

\begin{lem}
 For $r\ge 2$ and $m\ge r-1$,\begin{align}
\label{for:I}
 I_r(k)
&:=\sum^{k-1}_{j=r-1}\frac{(j-1)!}{(j-r+1)!}
=(r-2)!\binom{k-1}{k-r} \qquad (k\ge r).\\
\label{for:J}
 J_r(k)
&:=\sum^{k-1}_{j=r-1}\frac{(j-1)!}{(j-r+1)!}H(j,k-1)
=\frac{(r-2)!}{r-1}\binom{k-1}{k-r} \qquad (k\ge r).\\
\label{for:K}
 K^{m}_{r}(l)
&:=\sum^{m+r}_{k=\max\{r,l\}}\binom{m}{k-r}\binom{k}{l}\frac{(-1)^k}{k}=
\begin{cases}
 \DS{(-1)^r\frac{m!(r-1)!}{(m+r)!}} & (l=0), \\
 \DS{0} & (1\le l\le m), \\
 \DS{\frac{(-1)^{m+r}}{l}\binom{r-1}{l-1-m}} & (m+1\le l\le m+r).
\end{cases}
\end{align}
\end{lem}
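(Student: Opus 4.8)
The plan is to establish the three identities \eqref{for:I}, \eqref{for:J} and \eqref{for:K} one after another by elementary manipulations of binomial coefficients; \eqref{for:I} will be fed into the proof of \eqref{for:J}, while \eqref{for:K} is independent of the first two.

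For \eqref{for:I}, I would first recognize the summand as a binomial coefficient: $\frac{(j-1)!}{(j-r+1)!}=(j-1)(j-2)\cdots(j-r+2)=(r-2)!\binom{j-1}{r-2}$. Hence $I_r(k)=(r-2)!\sum_{j=r-1}^{k-1}\binom{j-1}{r-2}=(r-2)!\sum_{i=r-2}^{k-2}\binom{i}{r-2}$, and the hockey-stick identity $\sum_{i=r-2}^{k-2}\binom{i}{r-2}=\binom{k-1}{r-1}$, together with $\binom{k-1}{r-1}=\binom{k-1}{k-r}$, yields the claim.

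For \eqref{for:J}, the idea is to interchange the order of summation. Writing $H(j,k-1)=\sum_{i=j}^{k-1}\tfrac1i$ and swapping the $i$- and $j$-sums gives $J_r(k)=\sum_{i=r-1}^{k-1}\tfrac1i\sum_{j=r-1}^{i}\frac{(j-1)!}{(j-r+1)!}=\sum_{i=r-1}^{k-1}\tfrac1i\,I_r(i+1)$. By \eqref{for:I} applied with $k$ replaced by $i+1$ one has $I_r(i+1)=(r-2)!\binom{i}{i+1-r}=(r-2)!\binom{i}{r-1}$, and since $\tfrac1i\binom{i}{r-1}=\tfrac1{r-1}\binom{i-1}{r-2}$, another application of hockey-stick gives $J_r(k)=\tfrac{(r-2)!}{r-1}\sum_{i=r-1}^{k-1}\binom{i-1}{r-2}=\tfrac{(r-2)!}{r-1}\binom{k-1}{r-1}=\tfrac{(r-2)!}{r-1}\binom{k-1}{k-r}$.

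For \eqref{for:K}, I would split according to whether $l=0$ or $l\ge1$. When $l\ge1$, use $\tfrac1k\binom{k}{l}=\tfrac1l\binom{k-1}{l-1}$ and substitute $k=r+j$; since the terms with $0\le j<l-r$ (when $l>r$) vanish because $\binom{r+j-1}{l-1}=0$ there, the $j$-sum may be taken from $0$ to $m$, giving $K^m_r(l)=\tfrac{(-1)^r}{l}\sum_{j=0}^{m}(-1)^j\binom{m}{j}\binom{j+r-1}{l-1}$. The coefficient-extraction identity $\sum_{j=0}^{m}(-1)^j\binom{m}{j}\binom{j+a}{b}=(-1)^m\binom{a}{b-m}$, which follows from $\binom{j+a}{b}=[x^b](1+x)^{j+a}$ and $\sum_{j=0}^{m}(-1)^j\binom{m}{j}(1+x)^j=(-x)^m$, then gives $K^m_r(l)=\tfrac{(-1)^{m+r}}{l}\binom{r-1}{l-1-m}$; this is $0$ for $1\le l\le m$ and equals the stated value for $m+1\le l\le m+r$, since $\binom{r-1}{l-1-m}$ is nonzero only in that range. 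When $l=0$, substituting $k=r+j$ and using $\tfrac1{r+j}=\int_0^1 x^{r+j-1}\,dx$ turns the sum into $(-1)^r\int_0^1 x^{r-1}(1-x)^m\,dx=(-1)^r B(r,m+1)=(-1)^r\tfrac{(r-1)!\,m!}{(m+r)!}$.

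None of these steps is a serious obstacle. The only places requiring care are the bookkeeping of summation ranges in \eqref{for:K} — in particular, verifying that the omitted terms vanish so that the clean identity with the full $j$-sum from $0$ to $m$ applies — and tracking the sign $(-1)^k=(-1)^{r+j}$ through the substitution $k=r+j$ in both the $l=0$ and $l\ge1$ cases.
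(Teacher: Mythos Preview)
Your proposal is correct. For \eqref{for:I} your argument coincides with the paper's (rewrite the summand as $(r-2)!\binom{j-1}{r-2}$ and apply the hockey-stick identity). The differences lie in \eqref{for:J} and in the $l\ge 1$ case of \eqref{for:K}.

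For \eqref{for:J}, the paper proceeds via the generating function $\sum_{k\ge r}J_r(k)x^k$: after interchanging summations it recognizes an inner sum as $I_r(l+1)$, then evaluates $\sum_{l\ge 0}\binom{l+r-1}{l}\frac{x^{l+r-1}}{l+r-1}$ by an antiderivative to obtain $\frac{(r-2)!}{r-1}x^r(1-x)^{-r}$ and reads off the coefficient. Your route is more direct: swap the finite sums to get $J_r(k)=\sum_{i=r-1}^{k-1}\tfrac1i\,I_r(i+1)$, use $\tfrac1i\binom{i}{r-1}=\tfrac1{r-1}\binom{i-1}{r-2}$, and apply hockey-stick once more. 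This avoids the power-series detour entirely. For \eqref{for:K} with $l\ge 1$, the paper rewrites $K^m_r(l)=\frac{(-1)^r}{l!}\frac{d^{l-1}}{dx^{l-1}}\bigl(x^{r-1}(1-x)^m\bigr)\big|_{x=1}$ and appeals to the Leibniz rule, whereas you reach the same value by coefficient extraction, using $\binom{j+r-1}{l-1}=[x^{l-1}](1+x)^{j+r-1}$ together with $\sum_{j}(-1)^j\binom{m}{j}(1+x)^j=(-x)^m$. The derivative formulation makes the vanishing for $1\le l\le m$ visible from the factor $(1-x)^m$, while your formulation makes the final value $\binom{r-1}{l-1-m}$ drop out in one line; both are equally short.
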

\begin{proof}
 Since $I_r(k)=(r-2)!\sum^{k-r}_{j=0}\binom{j+r-2}{j}$,
 the equation \eqref{for:I} is immediately obtained from
 the formula $\sum^{b}_{j=0}\binom{j+a}{j}=\binom{a+b+1}{b}$.
 We next show the equation \eqref{for:J}.
 Consider the generating function $\sum^{\infty}_{k=r}J_r(k)x^k$.
 Changing the order of the summations, we have 
\begin{align*}
 \sum^{\infty}_{k=r}J_r(k)x^k
&=\sum^{\infty}_{j=r-1}\frac{(j-1)!}{(j-r+1)!}\sum^{\infty}_{k=j+1}\sum^{k-1}_{l=j}\frac{1}{l}x^k
=\sum^{\infty}_{j=r-1}\frac{(j-1)!}{(j-r+1)!}\sum^{\infty}_{l=j}\frac{1}{l}\sum^{\infty}_{k=l+1}x^k\\
&=\frac{x}{1-x}\sum^{\infty}_{l=r-1}I_{r}(l+1)\frac{x^{l}}{l}
=\frac{(r-2)!x}{1-x}\sum^{\infty}_{l=0}\binom{l+r-1}{l}\frac{x^{l+r-1}}{l+r-1}\\
&=\frac{(r-2)!}{r-1}x^{r}(1-x)^{-r}
=\sum^{\infty}_{k=r}\frac{(r-2)!}{r-1}\binom{k-1}{k-r}x^{k}.
\end{align*} 
 Notice that the fifth equality follows from
\[
 \sum^{\infty}_{l=0}\binom{l+r-1}{l}\frac{x^{l+r-1}}{l+r-1}
=\int^{x}_{0}t^{r-2}(1-t)^{-r}dt=\frac{1}{r-1}x^{r-1}(1-x)^{-r+1}.
\]
 Therefore, comparing the coefficient of $x^{k}$,
 one obtains the desired formula.
 We finally show the equation \eqref{for:K}.
 When $l=0$, the formula is obtained as     
\[
 K^{m}_{r}(0)
=\sum^{m}_{k=0}\binom{m}{k}\frac{(-1)^{k+r}}{k+r}
=(-1)^{r}\int^{1}_{0}x^{r-1}(1-x)^mdx
=(-1)^{r}\frac{m!(r-1)!}{(m+r)!}.
\]
 The other cases are easily proved by the fact
\[
  K^{m}_{r}(l)
=\frac{(-1)^{r}}{l!}\frac{d^{l-1}}{dx^{l-1}}\Bigl(x^{r-1}(1-x)^{m}\Bigr)\Bigl|_{x=1}
\]
 together with the Leibniz rule. This completes the proof of lemma.
\end{proof}

 We now give the proof of Theorem~\ref{thm:phi}.

\begin{proof}
[Proof of Theorem~\ref{thm:phi}]
 We prove the formula \eqref{for:iterated} by induction on $r$.
 The case $r=1$ has been already proved in 
 \cite[Corollary~$3.17$]{KurokawaWakayamaYamasaki}.
 Suppose that it holds for $r-1$. 
 Then, we have   
\begin{align}
\label{for:phi-middle}
 \Phi^{m}_{r}(t,z)
&=\int^{t}_{0}\Phi^{m}_{r-1}(\xi,z)d\xi\\
&=\sum^{m+r-1}_{j=r-1}\frac{(-1)^{j+r-1}}{(r-2)!}
\binom{m}{j-r+1}\int^{t}_{0}\xi^{m+r-j-1}\log{\mG_j(\xi+z)}d\xi+\int^{t}_{0}P^{m}_{r-1}(\xi,z)d\xi.
\nonumber
\end{align}
 Here, from \cite[Lemma~$3.16$]{KurokawaWakayamaYamasaki} with $n=1$,
 it holds that 
\begin{align}
\label{for:general-int_mG}
 \int^{t}_{0}\xi^m\log{\mG_{r}(\xi+z)}d\xi
=\sum^{m+1}_{l=1}\frac{(-1)^{l-1}m!(r-1)!}{(m+1-l)!(r+l-1)!}t^{m+1-l}\log{\mG_{r+l}(t+z)}
+P_{r}(t,z;m),
\end{align} 
 where $P_{r}(t,z;m)$ is the polynomial of degree $1+r+m$ in $t$
 given by  
\begin{align}
\label{for:Ppolynomial1}
 P_{r}(t,z;m)
&=\sum^{m+1}_{l=1}
\frac{(-1)^lm!(r-1)!H(r,r+l-1)}{(m+1-l)!(r+l)!}
t^{m+1-l}B_{r+l}(t+z)\\
&\ \ \ +\frac{(-1)^{m+1}}{m+1}\binom{m+r}{m+1}^{-1}
\Bigl(\log{\mG_{m+r+1}(z)}-\frac{H(r,m+r)}{m+r+1}B_{m+r+1}(z)\Bigr).
\nonumber
\end{align} 
 Hence, using the formula \eqref{for:general-int_mG}
 and replacing $m$ with $m+r-j-1$ and $r$ with $j$,
 from \eqref{for:phi-middle},
 we have
\begin{align*}
 \Phi^{m}_{r}(t,z) 
&=\frac{(-1)^{r-1}}{(r-2)!}\sum^{m+r-1}_{j=r-1}\sum^{m+r-j}_{l=1}
\frac{(-1)^{j+l-1}m!(j-1)!t^{m+r-j-l}\log{\mG_{j+l}(t+z)}}{(j-r+1)!(m+r-j-l)!(j+l-1)!}\\
&\ \ \ +\frac{(-1)^{r-1}}{(r-2)!}\sum^{m+r-1}_{j=r-1}
\frac{(-1)^jm!}{(j-r+1)!(m+r-j-1)!}P_{j}(t,z;m+r-j-1)
+\int^{t}_{0}P^{m}_{r-1}(\xi,z)d\xi.\nonumber
\end{align*}
 Write this as, say, $\Phi^{m}_{r}(t,z)=A_1+A_2+A_3$.

 At first, writing $j+l=k$ and using the formula \eqref{for:I}, we have  
\begin{align}
\label{for:A1}
 A_1
&=\frac{(-1)^{r-1}}{(r-2)!}\sum^{m+r}_{k=r}
\frac{(-1)^{k-1}m!I_r(k)}{(m+r-k)!(k-1)!}t^{m+r-k}\log{\mG_{k}(t+z)}\\
&=\sum^{m+r}_{k=r}\frac{(-1)^{k+r}}{(r-1)!}\binom{m}{k-r}t^{m+r-k}\log{\mG_{k}(t+z)}.\nonumber
\end{align}
 We next calculate $A_2$.
 From \eqref{for:Ppolynomial1}, we have 
\begin{align*}
 A_2
&=\frac{(-1)^{r-1}}{(r-2)!}\sum^{m+r-1}_{j=r-1}\sum^{m+r-j}_{l=1}
\frac{(-1)^{j+l}m!(j-1)!H(j,j+l-1)}{(j-r+1)!(m+r-j-l)!(j+l)!}t^{m+r-j-l}B_{j+l}(t+z)\\
&\ \ \
 +\frac{(-1)^{r-1}}{(r-2)!}\sum^{m+r-1}_{j=r-1}\frac{(-1)^{m+r}m!(j-1)!}{(j-r+1)!(m+r-1)!}
\Bigl(\log{\mG_{m+r}(z)}-\frac{H(j,m+r-1)}{m+r}B_{m+r}(z)\Bigr).
\nonumber
\end{align*}
 Writing $j+l=k$ again in the first sum and using the formulas \eqref{for:I} and \eqref{for:J},
 we have 
\begin{align*}
 A_2
&=\frac{(-1)^{r-1}}{(r-2)!}\sum^{m+r}_{k=r}\frac{(-1)^{k}m!J_r(k)}{(m+r-k)!k!}t^{m+r-k}B_{k}(t+z)\\
&\ \ \
 +\frac{(-1)^{r-1}}{(r-2)!}\frac{(-1)^{m+r}m!I_r(m+r)}{(m+r-1)!}\log{\mG_{m+r}(z)}+\frac{(-1)^{r}}{(r-2)!}\frac{(-1)^{m+r}m!J_r(m+r)}{(m+r)!}B_{m+r}(z)\\
&=\frac{1}{r-1}\frac{(-1)^{r-1}}{(r-1)!}\sum^{m+r}_{k=r}\binom{m}{k-r}\frac{(-1)^{k}}{k}t^{m+r-k}B_{k}(t+z)\\
&\ \ \ +\frac{(-1)^{m+1}}{(r-1)!}\log{\mG_{m+r}(z)}
-\frac{1}{r-1}\frac{(-1)^{m+1}}{(r-1)!}\frac{B_{m+r}(z)}{m+r}.
\nonumber
\end{align*}
 Moreover, from the equation $B_{k}(t+z)=\sum^{k}_{l=0}\binom{k}{l}t^{k-l}B_{l}(z)$, 
 changing the order of the summations in the first sum
 and using the formula \eqref{for:K},
 we have
\begin{align}
\label{for:A2}
 A_2
&=\frac{1}{r-1}\frac{(-1)^{r-1}}{(r-1)!}\sum^{m+r}_{l=0}K^{m}_{r}(l)t^{m+r-l}B_{l}(z)\\
&\ \ \  +\frac{(-1)^{m+1}}{(r-1)!}\log{\mG_{m+r}(z)}
-\frac{1}{r-1}\frac{(-1)^{m+1}}{(r-1)!}\frac{B_{m+r}(z)}{m+r}\nonumber\\
&=-\frac{1}{r-1}\frac{1}{(m+1)_r}t^{m+r}+\frac{1}{r-1}\frac{(-1)^{m+1}}{(r-1)!}
\sum^{r-1}_{l=1}\binom{r-1}{l}\frac{B_{m+r-l}(z)}{m+r-l}t^l\nonumber\\
&\ \ \ +\frac{(-1)^{m+1}}{(r-1)!}\log{\mG_{m+r}(z)}.\nonumber
\end{align}
 Finally, by the induction assumption, it is easy to see that 
\begin{align}
\label{for:A3}
 A_3
&=\frac{1}{(m+1)_{r}}H(r-1,m)t^{m+r}+\frac{(-1)^{m+r-1}}{r!}\sum^{m+r-1}_{l=r}
\binom{l}{r}^{-1}\frac{(-1)^{l}B_{m+r-l}(z)}{m+r-l}t^{l}\\
&\ \ \
 +\frac{(-1)^{m+1}}{(r-1)!}\sum^{r-1}_{l=2}\binom{r-1}{l}\frac{H(r-l,r-2)B_{m+r-l}(z)}{m+r-l}t^{l}\nonumber\\
&\ \ \ 
 +\frac{(-1)^{m+1}}{(r-1)!}\sum^{r-1}_{k=1}\binom{r-1}{k}t^{k}\log{\mG_{m+r-k}(z)}.\nonumber
\end{align}
 Therefore, from \eqref{for:A2} and \eqref{for:A3}, 
 one can see that $A_2+A_3=P^{m}_{r}(t,z)$.
 Hence, together with \eqref{for:A1},
 we obtain the desired formula \eqref{for:iterated}.
 This completes the proof of the theorem.  
\end{proof}

\begin{remark}
 Note  that not only the derivatives but also the special values of 
 $\tz_{L_n(s)}(w)$ at non-positive integer points  
 can be calculated from the expression \eqref{for:spectralzetaH}.
 In fact, let $r\in\bN$.
 Then, from \eqref{for:spectralzetaH},
 we have $\tz_{L_n(s)}(1-r)=\sum^{n-1}_{d=0}T_{n,d}(s)\sum^{5}_{j=1}H^{j}(1-r,s)$.
 Here, from \eqref{for:binom}, \eqref{for:zetaBernoulli} and \eqref{for:Hurwitz-expansion}, 
 one obtains 
\begin{align*}
 H^{1}(1-r,s)
&=-\frac{B_{2r+d-1}(\xi^{-}_{n}(s))}{2r+d-1},\\
 H^{2}(1-r,s)
&=-\sum^{r-1}_{l=1}\binom{r-1}{l}\frac{B_{2r+d-l-1}(\xi^{-}_{n}(s))}{2r+d-l-1}(2s)^{l},\\
 H^{4}(1-r,s)
&=\frac{(-1)^{r+1}}{2r}\binom{2r+d-1}{r}^{-1}(-2s)^{2r+d-1}
\end{align*}
 and $H^{3}(1-r,s)=H^{5}(1-r,s)=0$. 
 This shows that 
\begin{align*}
 \tz_{L_n(s)}(1-r)
&=-\sum^{n-1}_{d=0}T_{n,d}(s)
\sum^{r-1}_{l=0}\binom{r-1}{l}\frac{B_{2r+d-l-1}(\xi^{-}_{n}(s))}{2r+d-l-1}(2s)^{l}\\
&\ \ \ +\frac{(-1)^{r+1}}{2r}\sum^{n-1}_{d=0}\binom{2r+d-1}{r}^{-1}T_{n,d}(s)(-2s)^{2r+d-1}. 
\end{align*}
 In particular, $\tz_{L_n(s)}(1-r)\in\bQ[s]$.
 Notice that, if $n$ is odd, then $\tz_{L_n(s)}(1-r)=-(\check{n}^2-s^2)^{r-1}$
 because, from the general theory of spectral zeta functions (see \cite{Rosenberg1997}),
 $\z_{L_n(s)}(1-r)=0$ (for $|s|<\check{n}$).
\end{remark}

\subsection{Key polynomials}
\label{subsec:polynomials}

 In this subsection, 
 we study individually the polynomials which appear in Subsection~\ref{subsec:mainresult}.
 In particular, we give proofs of Lemma~\ref{lem:apm} and \ref{lem:bpm}.

\subsubsection{$T_{n,d}(s)$}

 In what follows,
 we understand that $T_{n,d}(s)=0$ if $d<0$ or $d\ge n$.

\begin{lem}
\label{lem:prop-Tnd}
 $(\mathrm{i})$\ $T_{n,d}(-s)=(-1)^{n+d+1}T_{n,d}(s)$.

 $(\mathrm{ii})$\ $T_{n,d}'(s)=(d+1)T_{n,d+1}(s)$.

 $(\mathrm{iii})$\ $T_{n,d}(\pm s)=\sum^{n-1}_{l=d}\binom{l}{d}(\pm 2s)^{l-d}T_{n,l}(\mp s)$.
\end{lem}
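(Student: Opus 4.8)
The plan is to prove all three parts directly from the two defining representations of $T_{n,d}(s)$ recorded in \eqref{for:P}, namely
\[
 P_n(z)=\sum^{n-1}_{d=0}T_{n,d}(s)(z-s)^d=\sum^{n-1}_{d=0}T_{n,d}(-s)(z+s)^d,
\]
rather than from the explicit Stirling-number formula, which would make the bookkeeping unpleasant. Note first that these two equalities are not independent assertions: the first is the definition of $T_{n,d}(s)$ as the coefficient of $(z-s)^d$ in the Taylor expansion of $P_n$ about $z=s$, and the second is obtained from the first by replacing $s$ with $-s$. So the content of \eqref{for:P} is precisely that $P_n(z)=\sum_d T_{n,d}(s)(z-s)^d$ for all $s$, which is what I will use.

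First I would handle $(\mathrm{ii})$. Since $T_{n,d}(s)=\frac{1}{d!}\bigl(\frac{\p}{\p z}\bigr)^d P_n(z)\big|_{z=s}$, differentiating in $s$ and using the chain rule gives $T_{n,d}'(s)=\frac{1}{d!}\bigl(\frac{\p}{\p z}\bigr)^{d+1}P_n(z)\big|_{z=s}=(d+1)T_{n,d+1}(s)$, with the convention $T_{n,n}\equiv 0$ making the top case consistent with $\deg P_n=n-1$. Next, for $(\mathrm{iii})$, I would expand the two representations against each other: substituting $z-s=(z+s)-2s$ into $\sum_d T_{n,d}(s)(z-s)^d$ and applying the binomial theorem yields $P_n(z)=\sum_l\bigl(\sum_{d\le l}\binom{l}{d}(-2s)^{l-d}T_{n,d}(s)\bigr)(z+s)^l$; comparing with $P_n(z)=\sum_l T_{n,l}(-s)(z+s)^l$ and matching the coefficient of $(z+s)^l$ gives $T_{n,l}(-s)=\sum_{d=0}^{l}\binom{l}{d}(-2s)^{l-d}T_{n,d}(s)$, which is the $(+)$ case of $(\mathrm{iii})$ after relabelling; the $(-)$ case follows by $s\mapsto -s$. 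For $(\mathrm{i})$, I would use the symmetry of $P_n$ itself: from \eqref{def:P}, $P_n(-z)=\binom{-z+\check n+1}{n}-\binom{-z+\check n-1}{n}$, and since $\binom{-w}{n}=(-1)^n\binom{w+n-1}{n}$ one checks $P_n(-z)=(-1)^{n+1}P_n(z)$ (the shift by $\pm1$ inside the binomials swaps the two terms and contributes the extra sign). Then $\sum_d T_{n,d}(-s)(z+s)^d=P_n(z)=(-1)^{n+1}P_n(-z)=(-1)^{n+1}\sum_d T_{n,d}(s)((-z)-s)^d=(-1)^{n+1}\sum_d (-1)^d T_{n,d}(s)(z+s)^d$, and comparing coefficients of $(z+s)^d$ gives $T_{n,d}(-s)=(-1)^{n+d+1}T_{n,d}(s)$.

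I do not expect any serious obstacle here; the only point requiring mild care is the sign identity $P_n(-z)=(-1)^{n+1}P_n(z)$, where one must correctly track that replacing $z$ by $-z$ turns $\binom{z+\check n\pm1}{n}$ into $(-1)^n\binom{-z-\check n\mp1+n-1}{n}=(-1)^n\binom{z+\check n\mp1}{n}$ (using $\check n=\frac{n-1}{2}$ so that $-\check n+n-1=\check n$), which interchanges the roles of the $+1$ and $-1$ terms and hence introduces the overall minus sign, giving the factor $(-1)^{n+1}$. Once that is in hand, parts $(\mathrm{i})$, $(\mathrm{ii})$, $(\mathrm{iii})$ are each a one-line coefficient comparison, and $(\mathrm{i})$ can alternatively be read off as the special case $z=0$-type consequence of $(\mathrm{iii})$ combined with the degree bound, which serves as a consistency check.
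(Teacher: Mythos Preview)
Your approach is essentially the paper's: all three parts are obtained by coefficient comparison in the expansion \eqref{for:P}, with (i) resting on the parity $P_n(-z)=(-1)^{n+1}P_n(z)$, which the paper asserts without the binomial verification you supply. One slip to fix in (iii): after writing $(z-s)^d=((z+s)-2s)^d$ and expanding, the coefficient of $(z+s)^l$ is $\sum_{d\ge l}\binom{d}{l}(-2s)^{d-l}T_{n,d}(s)$, not your $\sum_{d\le l}\binom{l}{d}(-2s)^{l-d}T_{n,d}(s)$; with the correct indexing this yields the $(-)$ case of (iii) directly after relabeling, and the paper reaches the same identity by expanding in the opposite direction, $(z\pm s)=(z\mp s)\pm 2s$.
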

\begin{proof}
 Since $P_n(-z)=(-1)^{n+1}P_n(z)$,
 replacing $z$ with $-z$ in the first equation in \eqref{for:P},
 we have 
\begin{equation*}
 P_n(z)
=\sum^{n-1}_{d=0}(-1)^{n+d-1}T_{n,d}(s)(z+s)^d. 
\end{equation*}
 Hence, comparing this equation with the second one in \eqref{for:P},
 we obtain the first claim.  
 The second one is obtained by differentiating the first equation in \eqref{for:P}.
 Furthermore, from \eqref{for:P} again, we have 
\begin{align*}
 \sum^{n-1}_{d=0}T_{n,d}(\pm s)(z\mp s)^d
&=\sum^{n-1}_{l=0}T_{n,l}(\mp s)(z\pm s)^d
=\sum^{n-1}_{l=0}T_{n,l}(\mp s)(z\mp s\pm 2s)^l\\
&=\sum^{n-1}_{l=0}T_{n,l}(\mp s)\sum^{l}_{d=0}\binom{l}{d}(\pm 2s)^{l-d}(z\mp s)^d\\
&=\sum^{n-1}_{d=0}\Biggl[\sum^{n-1}_{l=d}\binom{l}{d}(\pm 2s)^{l-d}T_{n,l}(\mp
 s)\Biggr](z\mp s)^d.
\end{align*}
 This shows the last claim.
\end{proof}

 From  fact $(\mathrm{ii})$ in Lemma~\ref{lem:prop-Tnd},
 if we want to know about the polynomial $T_{n,d}(s)$, 
 then it is enough to study only the case $d=0$ because  
 $T_{n,d}(s)=\frac{1}{d!}T^{(d)}_{n,0}(s)$. 

\begin{prop}
 $\mathrm{(i)}$ 
\begin{align}
\label{for:Tn0}
 T_{n,0}(s)
=\frac{2}{(n-1)!}\frac{s\G(s+\check{n})}{\G(s-\check{n}+1)}
=
\begin{cases}
 2 & (n=1), \\
 2s & (n=2), \\
 \DS{\frac{2}{(n-1)!}\prod^{\frac{n-3}{2}}_{j=0}(s^2-j^2)}
& (n\ge 3\,:\,\textrm{odd}), \\
 \DS{\frac{2s}{(n-1)!}\prod^{\frac{n-2}{2}}_{j=1}\Bigl(s^2-\bigl(\frac{2j-1}{2}\bigr)^2\Bigr)} 
& (n\ge 4\,:\,\textrm{even}).
\end{cases}
\end{align}

 $(\mathrm{ii})$
\begin{equation}
\label{for:geneTnd}
 \sum^{\infty}_{n=d+1}T_{n,d}(s)(2t)^{n-1}
=\frac{2}{d!}(2\,\mathrm{arcsinh}\,t)^d\bigl(t+\sqrt{1+t^2}\bigr)^{2s}. 
\end{equation}
\end{prop}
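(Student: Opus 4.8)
The plan is to handle the two parts separately, each reducing to a short explicit computation. For part (i), I would start from the observation that $T_{n,0}(s)=P_n(s)$, which is immediate from \eqref{for:P} on putting $z=s$. Inserting this into $P_n(z)=\frac{1}{n!}\bigl((z-\check{n}+1)_n-(z-\check{n}-1)_n\bigr)$, one notes that for $n\ge 2$ the two products $(s-\check{n}+1)_n$ and $(s-\check{n}-1)_n$ share the block of $n-2$ consecutive linear factors $(s-\check{n}+1)(s-\check{n}+2)\cdots(s+\check{n}-1)=\Gamma(s+\check{n})/\Gamma(s-\check{n}+1)$ (empty when $n=2$); factoring it out leaves the outer pairs $(s+\check{n})(s+\check{n}+1)$ and $(s-\check{n}-1)(s-\check{n})$, whose difference equals $2ns$. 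Hence $T_{n,0}(s)=\frac{2ns}{n!}\,\frac{\Gamma(s+\check{n})}{\Gamma(s-\check{n}+1)}=\frac{2}{(n-1)!}\,\frac{s\,\Gamma(s+\check{n})}{\Gamma(s-\check{n}+1)}$, the cases $n=1,2$ being checked directly. The closed forms then follow by expanding the $\Gamma$-quotient as the symmetric product $\prod_{j=-(\check{n}-1)}^{\check{n}-1}(s+j)$ and combining conjugate factors $(s+j)(s-j)=s^2-j^2$: for odd $n\ge 3$ this product already contains the factor $s$ (the $j=0$ term), so multiplication by the extra $s$ gives $\prod_{j=0}^{(n-3)/2}(s^2-j^2)$, while for even $n\ge 4$ the indices $j$ are the half-integers $\pm\tfrac{2k-1}{2}$, yielding $s\prod_{k=1}^{(n-2)/2}\Bigl(s^2-\bigl(\tfrac{2k-1}{2}\bigr)^2\Bigr)$.

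For part (ii), I would first reduce to the case $d=0$: iterating \Lem{prop-Tnd}(ii) gives $T_{n,d}(s)=\tfrac{1}{d!}\frac{\partial^{d}}{\partial s^{d}}T_{n,0}(s)$, and since $T_{n,d}\equiv 0$ for $n\le d$ one has $\sum_{n\ge d+1}T_{n,d}(s)(2t)^{n-1}=\sum_{n\ge 1}T_{n,d}(s)(2t)^{n-1}$; differentiating in $s$ order by order in $t$ (each coefficient is a polynomial in $s$) and using $\frac{\partial^{d}}{\partial s^{d}}(t+\sqrt{1+t^2})^{2s}=(2\,\mathrm{arcsinh}\,t)^{d}(t+\sqrt{1+t^2})^{2s}$ (since $\mathrm{arcsinh}\,t=\log(t+\sqrt{1+t^2})$), the general case follows once we establish
\[
 \sum_{n\ge 1}T_{n,0}(s)(2t)^{n-1}=2\bigl(t+\sqrt{1+t^2}\bigr)^{2s}.
\]
Put $x=2t$ and $y=t+\sqrt{1+t^2}=\tfrac{1}{2}\bigl(x+\sqrt{x^2+4}\bigr)$, so $y$ is the power series with $y|_{x=0}=1$ and $xy=y^2-1$; since $P_0\equiv 0$, the identity to prove becomes $\sum_{n\ge 0}P_n(s)x^n=2xy^{2s}$. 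Now $\check{n}\pm 1=\tfrac{n}{2}+(\pm 1-\tfrac{1}{2})$, so $P_n(s)=\binom{s+\frac{n}{2}+\frac{1}{2}}{n}-\binom{s+\frac{n}{2}-\frac{3}{2}}{n}$ is a difference of generalized binomial coefficients of step $\tfrac{1}{2}$. Introducing $\psi=\psi(x)$ with $\psi=1+x\sqrt{\psi}$ (so $\sqrt{\psi}=y$, $\psi=y^2$), the classical generalized binomial series — the $\lambda=\tfrac{1}{2}$ case of $\sum_{n\ge 0}\binom{\lambda n+r}{n}z^n=\mathcal{B}_\lambda(z)^{r}/\bigl(1-\lambda+\lambda\mathcal{B}_\lambda(z)^{-1}\bigr)$ with $\mathcal{B}_\lambda(z)=1+z\mathcal{B}_\lambda(z)^{\lambda}$ — gives $\sum_{n\ge 0}\binom{s+\frac{n}{2}+c}{n}x^n=2\psi^{s+c+1}/(\psi+1)$, and subtracting the instances $c=\tfrac{1}{2}$ and $c=-\tfrac{3}{2}$ yields
\[
 \sum_{n\ge 0}P_n(s)x^n=\frac{2\bigl(\psi^{s+3/2}-\psi^{s-1/2}\bigr)}{\psi+1}=\frac{2\psi^{s-1/2}(\psi^2-1)}{\psi+1}=2\psi^{s-1/2}(\psi-1)=2y^{2s-1}\cdot xy=2xy^{2s},
\]
as desired.

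The only non-routine ingredient is the generalized binomial series invoked at the end of part (ii); the remainder is Pochhammer bookkeeping (part (i)) and term-by-term differentiation. If a self-contained derivation of that series is wanted, it comes from Lagrange--B\"urmann inversion applied to $\psi=1+x\sqrt{\psi}$ (equivalently $x=\psi^{1/2}-\psi^{-1/2}$); the points needing care there are that the non-integer powers $\psi^{s+c}$ are honest formal power series in $x$ with polynomial-in-$s$ coefficients, so that coefficientwise comparison is legitimate, and that the branch in play is the one normalized by $\psi(0)=1$.
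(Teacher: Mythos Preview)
Your argument is correct. Part~(i) is handled just as the paper does it---the paper simply writes ``easily derived from the identity $T_{n,0}(s)=P_n(s)$ using \eqref{def:P}''---and your Pochhammer bookkeeping is precisely the computation that fills this in. The reduction of part~(ii) from general $d$ to $d=0$ by $s$-differentiation, using $\mathrm{arcsinh}\,t=\log(t+\sqrt{1+t^2})$, is also identical to the paper's.

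Where you genuinely diverge is the $d=0$ case of~(ii). The paper writes $2(t+\sqrt{1+t^2})^{2s}$ as $2\cos(x\log(t+\sqrt{1+t^2}))+2i\sin(x\log(t+\sqrt{1+t^2}))$ with $x=-2si$ and then matches against classical Taylor expansions of these two trigonometric composites (quoted from Gradshteyn--Ryzhik), reading off the product forms in \eqref{for:Tn0} term by term. Your route is instead to recognize $P_n(s)=\binom{s+\frac{n}{2}+\frac{1}{2}}{n}-\binom{s+\frac{n}{2}-\frac{3}{2}}{n}$ and invoke the $\lambda=\tfrac12$ generalized binomial series $\sum_{n\ge 0}\binom{\frac{n}{2}+r}{n}x^n=2\psi^{r+1}/(\psi+1)$ for $\psi=1+x\sqrt{\psi}$; the difference telescopes neatly to $2xy^{2s}$. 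Your approach is cleaner in that it avoids the complex substitution and the appeal to a table of integrals, and it explains structurally why $(t+\sqrt{1+t^2})^{2s}$ appears (it is $\mathcal{B}_{1/2}(2t)^{s}$). The paper's approach, on the other hand, needs no Lagrange--B\"urmann machinery and stays entirely within real-variable Taylor expansions once one accepts the quoted series. Both are short; yours is more self-contained provided one is willing to derive the generalized binomial identity, which as you note is a direct Lagrange inversion.
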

\begin{proof}
 The equation \eqref{for:Tn0} is easily derived from
 the identity $T_{n,0}(s)=P_n(s)$ using \eqref{def:P}.
 We next show the equation \eqref{for:geneTnd}.
 We first let $d=0$.
 Then, using the formulas (c.f., \cite{GR2007})
\begin{align*}
 \cos\Bigl(x\log{\bigl(t+\sqrt{1+t^2}\bigr)}\Bigr)
&=1+\sum_{n\ge 3\,:\,\textrm{odd}}(-1)^{\frac{n-1}{2}}\frac{1}{(n-1)!}\prod^{\frac{n-3}{2}}_{j=0}\bigl(x^2+(2j)^2\bigr)t^{n-1},\\
 \sin\Bigl(x\log{\bigl(t+\sqrt{1+t^2}\bigr)}\Bigr)
&=xt+\sum_{n\ge 4\,:\,\textrm{even}}(-1)^{\frac{n-2}{2}}\frac{x}{(n-1)!}\prod^{\frac{n-2}{2}}_{j=1}\bigl(x^2+(2j-1)^2\bigr)t^{n-1},
\end{align*}
 we have
\begin{align*}
 2\bigl(t+\sqrt{1+t^2}\bigr)^{2s}
&=2\exp\Bigl(i\bigl(-2si\log{\bigl(t+\sqrt{1+t^2}\bigr)}\bigr)\Bigr)\\
&=2\Bigl(\cos(-2si\log{\bigl(t+\sqrt{1+t^2}\bigr)}\Bigr)
+i\sin\Bigl(-2si\log{\bigl(t+\sqrt{1+t^2}\bigr)})\Bigr)\\
&=\sum^{\infty}_{n=1}T_{n,0}(s)(2t)^{n-1}.
\end{align*}
 Hence, one obtains the claim.
 Notice that, in the last equation, we have used the expression \eqref{for:Tn0}. 
 Now the formula \eqref{for:geneTnd} for $d\ge 1$ can be obtained
 by differentiating this equation $d$ times with respect to the variable $s$
 together with the identity $\mathrm{arcsinh}\,t=\log{(t+\sqrt{1+t^2})}$. 
\end{proof}

\begin{table}[htbp]
\begin{center}
{
{\renewcommand\arraystretch{1.15}
\begin{tabular}{r||c|c|c|c|c|c}
 $d$ & $0$ & $1$ & $2$ & $3$ & $4$ & $5$\\
\hline
\hline
 $n=1$ & $2$ & & & & & \\[1pt]
\hline
 $2$ & $2s$ & $2$ & & & & \\[1pt]
\hline
 $3$ & $s^2$ & $2s$ & $1$ & & & \\[1pt]
\hline
 $4$ & $-\frac{1}{12}s+\frac{1}{3}s^3$ & $-\frac{1}{12}+s^2$ & $s$ & $\frac{1}{3}$ & & \\[1pt]
\hline
 $5$ & $-\frac{1}{12}s^2+\frac{1}{12}s^4$ & $-\frac{1}{6}s+\frac{1}{3}s^3$
 &$-\frac{1}{12}+\frac{1}{2}s^2$ & $\frac{1}{3}s$ & $\frac{1}{12}$ & \\[1pt]
\hline
 $6$ & $\frac{3}{320}s-\frac{1}{24}s^3+\frac{1}{60}s^5$ & $\frac{3}{320}-\frac{1}{8}s^2+\frac{1}{12}s^4$
 &$-\frac{1}{8}s+\frac{1}{6}s^3$ & $-\frac{1}{24}+\frac{1}{6}s^2$ & $\frac{1}{12}s$ & $\frac{1}{60}$
\end{tabular}
}
}
\end{center}
\caption{The polynomial $T_{n,d}(s)$ for $n=1,2,3,4,5,6$.}
\end{table}

\subsubsection{$f_{n,r}(s)$}

 Let us recall the definition of the polynomial $f_{n,r}(s)$:
\[
 f_{n,r}(s)
:=\sum^{n-1}_{d=1}\frac{(-1)^{r+d}(r-1)!}{2(r+d)_r}H(r,r+d-1)
(2s)^{2r+d-1}T_{n,d}(s).
\]
 Currently, we have not yet obtained any ``closed'' (or ``simplified'') expression of
 $f_{n,r}(s)$, however, one can see the following properties.

\begin{lem}
\label{lem:f}
 $f_{n,r}(s)=f_{n,r}(-s)=(-1)^{n}f_{n,r}(s)$. 
 In particular, $f_{n,r}(s)$ is identically $0$ if $n$ is odd and an
 even polynomial otherwise.        
\end{lem}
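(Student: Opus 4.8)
The plan is to prove the two equalities $f_{n,r}(-s)=(-1)^{n}f_{n,r}(s)$ and $f_{n,r}(s)=f_{n,r}(-s)$ separately, and then read off the stated consequence.

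\emph{The sign $(-1)^n$.} First I would substitute $s\mapsto-s$ directly in
\[
 f_{n,r}(s)=\sum^{n-1}_{d=1}\frac{(-1)^{r+d}(r-1)!}{2(r+d)_r}H(r,r+d-1)(2s)^{2r+d-1}T_{n,d}(s).
\]
The factor $(2s)^{2r+d-1}$ is multiplied by $(-1)^{2r+d-1}=(-1)^{d+1}$, and by Lemma~\ref{lem:prop-Tnd}(i) the factor $T_{n,d}(s)$ is multiplied by $(-1)^{n+d+1}$. The product of these two signs is $(-1)^{n+2d+2}=(-1)^{n}$, which is independent of $d$, so every summand acquires the same factor $(-1)^{n}$; this gives $f_{n,r}(-s)=(-1)^{n}f_{n,r}(s)$, and in particular $f_{n,r}$ is already even when $n$ is even.

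\emph{Evenness.} For the remaining equality I would argue analytically. By \eqref{for:spectralzetan}, the series $\tz_{L_n(s)}(w)=\sum_{k\ge 1}m_{n,k}(\l_{n,k}+\check{n}^2-s^2)^{-w}$ depends on $s$ only through $s^2$ for $\Re(w)>\frac n2$; hence, by uniqueness of the meromorphic continuation in $w$, so does $\tz_{L_n(s)}(w)$ at $w=1-r$ together with its $w$-derivative there, and therefore $\widetilde{\Det}_r(L_n(s))=\widetilde{\Det}_r(L_n(-s))$ for $|s|<\frac13$. On the other hand, in \eqref{for:Detr3} the substitution $s\mapsto-s$ interchanges $\xi^{+}_n(s)$ and $\xi^{-}_n(s)$ and fixes each exponent $\b_{n,r}(s,l)$, which is even in $s$ by Lemma~\ref{lem:bpm}; so the product over $l$ in \eqref{for:Detr3} is unchanged by $s\mapsto-s$. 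Comparing the two instances of \eqref{for:Detr3} at $s$ and $-s$ (the gamma-product there is a nonvanishing factor) yields $e^{f_{n,r}(s)}=e^{f_{n,r}(-s)}$ on $|s|<\frac13$, hence $f_{n,r}(s)-f_{n,r}(-s)\in 2\pi i\,\bZ$. Since $f_{n,r}$ has rational coefficients (immediate from the definition) this difference is real, and $f_{n,r}(0)=0$ because every summand carries a positive power of $s$; therefore $f_{n,r}(s)=f_{n,r}(-s)$ as polynomials.

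\emph{Conclusion and main difficulty.} Combining the two equalities gives the displayed chain $f_{n,r}(s)=f_{n,r}(-s)=(-1)^{n}f_{n,r}(s)$: for $n$ even this is precisely the assertion that $f_{n,r}$ is an even polynomial, while for $n$ odd it forces $2f_{n,r}(s)=0$, i.e. $f_{n,r}\equiv 0$. The only step that is not purely formal is the evenness argument — more precisely, transporting the ``function of $s^2$ only'' property of $\tz_{L_n(s)}(w)$, obvious in the half-plane of absolute convergence, to the $w$-derivative at $w=1-r$ — and that is handled by analytic continuation in $w$ for fixed $s$. A purely algebraic alternative would be to prove $f_{n,r}\equiv 0$ for odd $n$ by exhibiting the cancellation in the defining sum directly (it is already visible at $n=3$, where the two relevant coefficients sum to zero), but carrying this out in general looks noticeably more laborious than the analytic route.
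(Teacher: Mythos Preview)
Your proof is correct and follows essentially the same approach as the paper: the identity $f_{n,r}(-s)=(-1)^n f_{n,r}(s)$ is read off directly from the definition together with $T_{n,d}(-s)=(-1)^{n+d+1}T_{n,d}(s)$, while the evenness $f_{n,r}(s)=f_{n,r}(-s)$ is deduced from the fact that $\widetilde{\Det}_r(L_n(s))$ depends only on $s^2$ combined with the evenness of $\b_{n,r}(s,l)$ and the symmetry $\xi^{\pm}_n(-s)=\xi^{\mp}_n(s)$. The paper argues the same way (in the opposite order) but is terser; your extra care in passing from $e^{f_{n,r}(s)}=e^{f_{n,r}(-s)}$ to $f_{n,r}(s)=f_{n,r}(-s)$ via real-valuedness and $f_{n,r}(0)=0$ is a nice touch that the paper leaves implicit.
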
 
\begin{proof}
 The first equation can be seen from the expression \eqref{for:DetrB}
 together with the facts that both $L_n(s)$ and $\b_{n,r}(s,l)$ are even and 
 $\xi^{\pm}_{n}(-s)=\xi^{\mp}_{n}(s)$.
 The second fact follows from the definition of $f_{n,r}(s)$
 and identity $(\mathrm{i})$ in Lemma~\ref{lem:prop-Tnd}. 
\end{proof}

\begin{example}
 We here give the explicit expression of the polynomial $f_{n,r}(s)$ for $n=2,4,6$
 (we again note that $f_{n,r}(s)=0$ if $n$ is odd).
\begin{align*}
 f_{2,r}(s)
&=\frac{(-1)^{r+1}}{r^2}\binom{2r}{r}^{-1}(2s)^{2r},\\
 f_{4,r}(s)
&=\frac{(-1)^{r+1}}{r^2}\binom{2r}{r}^{-1}(2s)^{2r}
\biggl(-\frac{1}{24}+\frac{3r+1}{12(2r+1)(2r+2)}(2s)^2\biggr),\\
 f_{6,r}(s)
&=\frac{(-1)^{r+1}}{r^2}\binom{2r}{r}^{-1}(2s)^{2r}\\
&\ \ \ \times
\bigg(\frac{3}{640}-\frac{3r+1}{96(2r+1)(2r+2)}(2s)^2+\frac{15r^2+25r+6}{480(2r+1)(2r+2)(2r+3)(2r+4)}(2s)^4\biggr).
\end{align*} 
\end{example}

\begin{example}
 Even if $r=1$, $f_{n,r}(s)$ is complicated.
 For example,  
\begin{align*}
 f_{2,1}(s)
&=2s^2,\\
 f_{4,1}(s)
&=\frac{1}{36}s^2(8s^2-3),\\
 f_{6,1}(s)
&=\frac{1}{43200}s^2(368s^4-1200s^2+405)),\\
 f_{8,1}(s)
&=\frac{1}{50803200}s^2(8448s^6-90160s^4+217560s^2-70875),\\
 f_{10,1}(s)
&=\frac{1}{73156608000}s^2(144128 s^8-3548160s^6+25425120s^4-54247200s^2+17364375).
\end{align*} 
\end{example}

\subsubsection{$\a_{n,r}(s,k)$}

 We give a proof of Lemma~\ref{lem:apm}.

\begin{proof}
[Proof of Lemma~\ref{lem:apm}] 
 It is sufficient to show the equation \eqref{for:gene-alpha}.
 Changing the order of the summation and using the equation \eqref{for:P},
 we have
\begin{align*}
 A^{+}_{n,r}(s,X)
&=-X^{r-1}\sum^{n-1}_{d=0}
\Biggl(\sum^{r+d-1}_{k=0}\binom{r+d-1}{k}X^{k}(-2s)^{r+d-1-k}\Biggr)T_{n,d}(s)\\
&=-X^{r-1}(X-2s)^{r-1}\sum^{n-1}_{d=0}T_{n,d}(s)\bigl((X-s)-s\bigr)^d\nonumber\\
&=-P_n(X-s)X^{r-1}(X-2s)^{r-1}.\nonumber
\end{align*}
 Similarly one has 
\begin{align*}
 A^{-}_{n,r}(-s,X)
&=-X^{r-1}\sum^{n-1}_{d=0}
\Biggl(\sum^{r-1}_{k=0}\binom{r-1}{k}(-2s)^{k}X^{r-1-k}\Biggr)T_{n,d}(-s)X^d\\
&=-X^{r-1}(X-2s)^{r-1}\sum^{n-1}_{d=0}T_{n,d}(-s)\bigl((X-s)+s\bigr)^d\nonumber\\
&=-P_n(X-s)X^{r-1}(X-2s)^{r-1}.\nonumber
\end{align*}
 This shows the claim.
\end{proof}

 The following expression is more convenient than
 \eqref{for:ap} or \eqref{for:am}.

\begin{prop}

\begin{equation}
\label{for:ar}
 \a_{n,r}(s,k)
=(-1)^{n+k+1}\sum^{\min\{r-1,k-r\}}_{j=0}\binom{r-1}{j}(2s)^{r-1-j}T_{n,k-r-j}(s). 
\end{equation}
\end{prop}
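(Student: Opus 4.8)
The plan is to read off \eqref{for:ar} directly from the generating‑function identity of Lemma~\ref{lem:apm}. Recall that $A_{n,r}(s,X)=\sum_{k=r}^{2r+n-2}\a_{n,r}(s,k)X^{k-1}=-P_n(X-s)X^{r-1}(X-2s)^{r-1}$, so the assertion is nothing but the coefficient of $X^{k-1}$ on the right‑hand side. First I would substitute into $P_n(X-s)$ the second representation in \eqref{for:P} with $z=X-s$, so that $z+s=X$ and $P_n(X-s)=\sum_{d=0}^{n-1}T_{n,d}(-s)X^d$. Then I would expand $(X-2s)^{r-1}=\sum_{j=0}^{r-1}\binom{r-1}{j}(-2s)^j X^{r-1-j}$ by the binomial theorem and multiply everything by the remaining factor $-X^{r-1}$, obtaining
\[
 A_{n,r}(s,X)=-\sum_{d=0}^{n-1}\sum_{j=0}^{r-1}\binom{r-1}{j}(-2s)^j\,T_{n,d}(-s)\,X^{2r-2-j+d}.
\]

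Next I would collect the terms with $2r-2-j+d=k-1$, i.e.\ $d=k-2r+1+j$, giving $\a_{n,r}(s,k)=-\sum_{j=0}^{r-1}\binom{r-1}{j}(-2s)^j T_{n,k-2r+1+j}(-s)$, and then reindex by $j\mapsto r-1-j$ so that the power of $2s$ becomes $r-1-j$ and the index of $T$ becomes $k-r-j$:
\[
 \a_{n,r}(s,k)=-\sum_{j}\binom{r-1}{j}(-2s)^{r-1-j}\,T_{n,k-r-j}(-s).
\]
Finally I would apply identity~$(\mathrm{i})$ of Lemma~\ref{lem:prop-Tnd}, $T_{n,k-r-j}(-s)=(-1)^{n+k-r-j+1}T_{n,k-r-j}(s)$, together with $(-2s)^{r-1-j}=(-1)^{r-1-j}(2s)^{r-1-j}$, and check that the three accumulated signs (the overall $-1$ from Lemma~\ref{lem:apm}, the $(-1)^{r-1-j}$ from the reindexing, and the parity factor) combine to $(-1)^{n+k+1}$ with the $j$‑dependence cancelling, since $1+(r-1-j)+(n+k-r-j+1)=n+k+1-2j$. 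For the summation range, the binomial forces $0\le j\le r-1$, and $T_{n,k-r-j}(s)=0$ when $k-r-j<0$, so exactly the indices $0\le j\le\min\{r-1,k-r\}$ survive; the contributions with $k-r-j\ge n$ are already zero by the convention $T_{n,d}=0$ for $d\ge n$, so no extra restriction is needed. This is precisely \eqref{for:ar}.

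Everything here is elementary, so the only real hazard is the sign and index bookkeeping; the step most likely to cause slips is verifying that the four (or three, after combining) $\pm1$ factors assemble into a $j$‑independent $(-1)^{n+k+1}$. As a cross‑check I would also run the parallel computation starting from the defining sum \eqref{for:am} for $\a^{-}_{n,r}(s,k)$, use $\a_{n,r}(s,k)=\a^{-}_{n,r}(-s,k)$, substitute $j=2r+d-k-1$, and reindex and apply Lemma~\ref{lem:prop-Tnd}$(\mathrm{i})$ in the same way; both derivations must land on the identical expression, which gives confidence that the signs are right.
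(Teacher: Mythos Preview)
Your argument is correct, and it is actually a bit slicker than the paper's. Both proofs start from the generating function $A_{n,r}(s,X)=-P_n(X-s)X^{r-1}(X-2s)^{r-1}$ of Lemma~\ref{lem:apm}, but they plug in different expansions of $P_n(X-s)$ from \eqref{for:P}. The paper uses the \emph{first} representation, $P_n(X-s)=\sum_d T_{n,d}(s)(X-2s)^d$, which combines nicely with the factor $(X-2s)^{r-1}$ but leaves a binomial coefficient $\binom{l+r-1}{k-r}$ that then has to be split via a Vandermonde-type identity; finishing the argument requires both parts $(\mathrm{i})$ and $(\mathrm{iii})$ of Lemma~\ref{lem:prop-Tnd}. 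You instead use the \emph{second} representation, $P_n(X-s)=\sum_d T_{n,d}(-s)X^d$, which immediately produces pure powers of $X$; after expanding $(X-2s)^{r-1}$, the coefficient of $X^{k-1}$ falls out directly, and a single application of Lemma~\ref{lem:prop-Tnd}$(\mathrm{i})$ suffices. Your route therefore bypasses the Vandermonde step and part $(\mathrm{iii})$ entirely, at the cost of one extra reindexing. The sign check $1+(r-1-j)+(n+k-r-j+1)\equiv n+k+1\pmod 2$ and the handling of the summation range are both fine.
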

\begin{proof}
 From expressions \eqref{for:gene-alpha} and \eqref{for:P}, we have
\begin{align*}
 A_{n,r}(s,X)
&=-X^{r-1}\sum^{n-1}_{d=0}T_{n,d}(s)(X-2s)^{d+r-1}\\
&=-\sum^{n-1}_{d=0}T_{n,d}(s)\sum^{d+r-1}_{l=0}\binom{d+r-1}{l}(-2s)^{d+r-1-l}X^{r+l-1}\\
&=-\sum^{n+2r-2}_{k=r}\Biggl(\sum^{n-1}_{d=\max\{k-2r+1,0\}}\binom{d+r-1}{k-r}(-2s)^{d+2r-1-k}T_{n,d}(s)\Biggr)X^{k-1}.
\end{align*}  
 Hence, comparing the coefficient of $X^{k-1}$, we have 
\[
 \a_{n,r}(s,k)
=-\sum^{n-1}_{l=\max\{k-2r+1,0\}}\binom{l+r-1}{k-r}(-2s)^{l+2r-1-k}T_{n,l}(s).
\]
 Here, using the equation $\binom{n}{k}=\sum^{l}_{j=0}\binom{l}{j}\binom{n-l}{k-j}$,
 we have
\begin{align*}
 \a_{n,r}(s,k)
&=-\sum^{r-1}_{j=0}\binom{r-1}{j}(-2s)^{r-1-j}\sum^{n-1}_{l=\max\{k-2r+1,0\}}\binom{l}{k-r-j}(-2s)^{l-(k-r-j)}T_{n,l}(s)\\
&=-\sum^{\min\{r-1,k-r\}}_{j=0}\binom{r-1}{j}(-2s)^{r-1-j}\sum^{n-1}_{l=k-r-j}\binom{l}{k-r-j}(-2s)^{l-(k-r-j)}T_{n,l}(s). 
\end{align*}
 Therefore, the desired claim follows from
 $(\mathrm{i})$ and $(\mathrm{iii})$ in Lemma~\ref{lem:prop-Tnd}.
\end{proof}

\begin{example}
 From the equation \eqref{for:ar}, we have   
\begin{align*}
 \a_{n,1}(s,k)
&=(-1)^{n+k+1}T_{n,k-1}(s),\\
 \a_{n,2}(s,k)
&=(-1)^{n+k+1}\Bigl(2sT_{n,k-2}(s)+T_{n,k-3}(s)\Bigr),\\
 \a_{n,3}(s,k)
&=(-1)^{n+k+1}\Bigl((2s)^2T_{n,k-3}(s)+2(2s)T_{n,k-4}(s)+T_{n,k-5}(s)\Bigr).
\end{align*}
\end{example}

\subsubsection{$\b_{n,r}(s,l)$}

 Similar to the previous section, we first give a proof of Lemma~\ref{lem:bpm}.

\begin{proof}
[Proof of Lemma~\ref{lem:bpm}]
 In this case, it is enough to show the equation \eqref{for:gene-b}.
 Changing the order of the summations and using the generating function \eqref{for:genec},
 we have
\begin{align*}
 B^{\pm}_{n,r}(s,Y)
&=\sum^{2r+n-2}_{k=r}\a_{n,r}(\pm s,k)\bigl(Y+\xi^{\pm}_{n}(s)\bigr)^{k-1}
=A_{n,r}\bigl(\pm s,Y+\xi^{\pm}_{n}(s)\bigr).
\end{align*}
 Hence, the expression \eqref{for:gene-b} follows from the formula \eqref{for:gene-alpha}.
\end{proof}

 From the generating function \eqref{for:gene-b},
 one obtains the following expression of $\b_{n,r}(s,l)$.

\begin{prop}

\begin{align}
\label{for:br}
 \b_{n,r}(s,l)=
\begin{cases}
 -(\check{n}^2-s^2)^{r-1} & (1\le l\le n-1), \\[3pt]
 \DS{-\sum^{2r-2}_{k=l-n}(-1)^kc^{k}_{n}(l-n)}\\
\DS{ \ \times\sum_{0\le p,q\le r-1\atop p+q=k}
\binom{r-1}{p}\binom{r-1}{q}\bigl(\xi^{+}_{n}(s)\bigr)^{r-1-p}\bigl(\xi^{-}_{n}(s)\bigr)^{r-1-q}} &
 (n\le l\le n+2r-2).
\end{cases}
\end{align}
 Here, for $0\le l\le k$,
 $c^{k}_{n}(l)$ is defined by the difference equation 
\begin{equation}
\label{def:c}
 c^{k}_{n}(l)=
\begin{cases}
 nc^{k-1}_{n}(0)-(n-1) & (l=0),\\
 (n+l)c^{k-1}_n(l)-(n+l-1)c^{k-1}_{n}(l-1) & (1\le l\le k-1), \\
 -(n+k-1)c^{k-1}_{n}(k-1) & (l=k)
\end{cases}
\end{equation}
 with the initial condition $c^{0}_{n}(0):=2$.
 In particular, 
\begin{align}
\label{for:bn}
 \b_{n,r}(s,n)
&=-2(\check{n}^2-s^2)^{r-1}, \\
\label{for:bn+2r-2}
  \b_{n,r}(s,n+2r-2)
&=-2(n)_{2r-2}.
\end{align}
\end{prop}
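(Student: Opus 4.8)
The plan is to read off the coefficients $\b_{n,r}(s,l)$ from the generating identity \eqref{for:gene-b} of Lemma~\ref{lem:bpm},
\[
 B_{n,r}(s,Y)=\sum^{2r+n-2}_{l=1}\b_{n,r}(s,l)\binom{Y+l-1}{l-1}
=-P_n(Y+1+\check{n})\bigl(Y+\xi^{+}_{n}(s)\bigr)^{r-1}\bigl(Y+\xi^{-}_{n}(s)\bigr)^{r-1},
\]
using that $\{\binom{Y+l-1}{l-1}\}_{l\ge1}$ is a degree-graded basis of $\bC[Y]$. Expand the last factor as $\bigl(Y+\xi^{+}_{n}(s)\bigr)^{r-1}\bigl(Y+\xi^{-}_{n}(s)\bigr)^{r-1}=\sum^{2r-2}_{k=0}d_{n,r}(s,k)Y^k$, where $d_{n,r}(s,k):=\sum_{p+q=k,\,0\le p,q\le r-1}\binom{r-1}{p}\binom{r-1}{q}\bigl(\xi^{+}_{n}(s)\bigr)^{r-1-p}\bigl(\xi^{-}_{n}(s)\bigr)^{r-1-q}$ is exactly the inner sum appearing in \eqref{for:br}. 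Then $B_{n,r}(s,Y)=-\sum_{k}d_{n,r}(s,k)\,Y^kP_n(Y+1+\check{n})$, so the whole problem reduces to expanding each $Y^kP_n(Y+1+\check{n})$ in the binomial basis.

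Two elementary facts do the job. First, using $\check{n}=\frac{n-1}{2}$ one has $P_n(Y+1+\check{n})=\binom{Y+n+1}{n}-\binom{Y+n-1}{n}=\frac{2(Y+1+\check{n})}{n-1}\binom{Y+n-1}{n-2}$, and from the identities $Y\binom{Y+l-1}{l-1}=l\binom{Y+l}{l}-l\binom{Y+l-1}{l-1}$ and $\binom{(Y+1)+m-1}{m-1}=\sum^{m}_{j=1}\binom{Y+j-1}{j-1}$ one obtains the clean expansion
\[
 P_n(Y+1+\check{n})=\sum^{n-1}_{l=1}\binom{Y+l-1}{l-1}+2\binom{Y+n-1}{n-1}.
\]
Second, writing $Y^kP_n(Y+1+\check{n})=\sum_{l}e_n(k,l)\binom{Y+l-1}{l-1}$, the first of those identities turns multiplication by $Y$ into the recursion $e_n(k,l)=(l-1)e_n(k-1,l-1)-l\,e_n(k-1,l)$; with initial data $e_n(0,l)=1$ for $1\le l\le n-1$, $e_n(0,n)=2$ and $e_n(0,l)=0$ otherwise, one solves $e_n(k,l)=(-1)^k$ for all $k$ and $1\le l\le n-1$, and then checks directly that $c^{k}_{n}(l):=(-1)^k e_n(k,l+n)$ satisfies \eqref{def:c} with $c^0_n(0)=e_n(0,n)=2$ — in particular the inhomogeneous term $-(n-1)$ in the $l=0$ case of \eqref{def:c} comes precisely from $e_n(k-1,n-1)=(-1)^{k-1}$.

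Comparing coefficients of $\binom{Y+l-1}{l-1}$ now gives $\b_{n,r}(s,l)=-\sum_{k}d_{n,r}(s,k)\,e_n(k,l)$. For $1\le l\le n-1$ this is $-\sum_{k}(-1)^kd_{n,r}(s,k)$, i.e. minus the value of $\bigl(Y+\xi^{+}_{n}(s)\bigr)^{r-1}\bigl(Y+\xi^{-}_{n}(s)\bigr)^{r-1}$ at $Y=-1$, which equals $(\check{n}^2-s^2)^{r-1}$ since $\xi^{\pm}_{n}(s)-1=\check{n}\pm s$; hence $\b_{n,r}(s,l)=-(\check{n}^2-s^2)^{r-1}$. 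For $n\le l\le n+2r-2$ it equals $-\sum^{2r-2}_{k=l-n}(-1)^kc^{k}_{n}(l-n)d_{n,r}(s,k)$, which is \eqref{for:br}, the summation range being forced by $c^k_n(l-n)=0$ for $k<l-n$ and $d_{n,r}(s,k)=0$ for $k>2r-2$. Finally, the two boundary recursions inside \eqref{def:c} give the special values: $c^{k}_{n}(k)=-(n+k-1)c^{k-1}_{n}(k-1)$ yields $c^{k}_{n}(k)=2(-1)^k(n)_{k}$, so (since $d_{n,r}(s,2r-2)=1$) $\b_{n,r}(s,n+2r-2)=-c^{2r-2}_{n}(2r-2)=-2(n)_{2r-2}$, which is \eqref{for:bn+2r-2}; and $c^{k}_{n}(0)=nc^{k-1}_{n}(0)-(n-1)$ yields $c^{k}_{n}(0)=n^k+1$, so $\b_{n,r}(s,n)=-\sum_{k}(-1)^k(n^k+1)d_{n,r}(s,k)$ is minus the sum of the values of $\bigl(Y+\xi^{+}_{n}(s)\bigr)^{r-1}\bigl(Y+\xi^{-}_{n}(s)\bigr)^{r-1}$ at $Y=-1$ and $Y=-n$, both equal to $(\check{n}^2-s^2)^{r-1}$ (using $\xi^{\pm}_{n}(s)-n=-\check{n}\pm s$), giving \eqref{for:bn}.

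I expect the only delicate point to be the boundary bookkeeping in the binomial-basis recursion: establishing the expansion of $P_n(Y+1+\check{n})$ exactly (the half-integer shift $\check{n}=\frac{n-1}{2}$ is what makes the top coefficient $2$ and all the lower ones $1$), and then tracking how this base case propagates, since the stationary value $e_n(k,l)=(-1)^k$ on $1\le l\le n-1$ is exactly what produces the inhomogeneous term $-(n-1)$ in \eqref{def:c}. Keeping in view the degree bounds — the $Y$-degree of $Y^kP_n(Y+1+\check{n})$ is $k+n-1$, and $d_{n,r}(s,k)=0$ for $k>2r-2$ — is what pins down the summation ranges; everything else is routine manipulation of binomial coefficients.
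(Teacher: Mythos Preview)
Your proof is correct and is essentially the paper's own argument rephrased in the polynomial basis rather than through a generating-function transform. The paper computes $\sum_{Y\ge 0}B_{n,r}(s,Y)T^Y$ with $T=\frac{t-1}{t}$, using that $\sum_{Y\ge 0}\binom{Y+l-1}{l-1}T^Y=(1-T)^{-l}=t^l$ and that multiplication by $Y$ corresponds to the operator $T\frac{d}{dT}=t(t-1)\frac{d}{dt}$; under this dictionary your expansion $P_n(Y+1+\check{n})=\sum_{l=1}^{n-1}\binom{Y+l-1}{l-1}+2\binom{Y+n-1}{n-1}$ is exactly the paper's identity $\sum_{Y\ge 0}P_n(Y+1+\check{n})T^Y=\sum_{l=1}^{n-1}t^l+2t^n$, and your recursion $e_n(k,l)=(l-1)e_n(k-1,l-1)-l\,e_n(k-1,l)$ coming from $Y\binom{Y+l-1}{l-1}=l\binom{Y+l}{l}-l\binom{Y+l-1}{l-1}$ is precisely the action of $t(t-1)\frac{d}{dt}$ on $t^l$. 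So the two computations are isomorphic; your direct basis manipulation avoids introducing the auxiliary variable $t$, while the paper's generating-function version makes the evaluation of $\sum_Y P_n(Y+1+\check{n})T^Y$ a one-line geometric-series calculation.
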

\begin{proof}
 Let $T=\frac{t-1}{t}$. 
 Let us calculate the sum $\sum^{\infty}_{Y=0}B_{n,r}(s,Y)T^Y$ in two ways.
 First, from the definition, using the binomial theorem, we have
\begin{align}
\label{for:B1}
 \sum^{\infty}_{Y=0}B_{n,r}(s,Y)T^Y
&=\sum^{2r+n-2}_{l=1}\b_{n,r}(s,l)(1-T)^{-l} 
=\sum^{2r+n-2}_{l=1}\b_{n,r}(s,l)t^l.
\end{align}
 On the other hand, 
 from the equation \eqref{for:gene-b}, we have  
\begin{align*}
 \sum^{\infty}_{Y=0}B_{n,r}(s,Y)T^Y
&=-\sum^{r-1}_{p=0}\sum^{r-1}_{q=0}\binom{r-1}{p}\binom{r-1}{q}\xi^{p,q}_{n,r}(s)
\sum^{\infty}_{Y=0}P_n(Y+1+\check{n})Y^{p+q}T^{Y}\nonumber \\
&=-\sum^{r-1}_{p=0}\sum^{r-1}_{q=0}\binom{r-1}{p}\binom{r-1}{q}\xi^{p,q}_{n,r}(s)
\Bigl(T\frac{d}{dT}\Bigr)^{p+q}\sum^{\infty}_{Y=0}P_n(Y+1+\check{n})T^{Y},\nonumber 
\end{align*}
 where, for simplicity,
 we put $\xi^{p,q}_{n,r}(s):=(\xi^{+}_{n}(s))^{r-1-p}(\xi^{-}_{n}(s))^{r-1-q}$.
 Here, since $1-T=t^{-1}$, 
 the inner sum can be calculated as 
\begin{align*}
 \sum^{\infty}_{Y=0}P_n(Y+1+\check{n})T^{Y}
&=\frac{1}{T}\bigl((1-T)^{-(n+1)}-1\bigr)-T(1-T)^{-(n+1)}\\
&=\frac{1}{1-(1-T)}\Bigl(2(1-T)^{-n}-(1-T)^{-n+1}-1\Bigr)\\
&=2\sum^{\infty}_{l=0}t^{n-l}-\sum^{\infty}_{l=0}t^{n-1-l}-\sum^{\infty}_{l=0}t^{-l}\\
&=\sum^{n-1}_{l=1}t^{l}+2t^{n}.
\end{align*}
 Hence, from the equation $T\frac{d}{dT}=t(t-1)\frac{d}{dt}$, we have
\begin{align*}
 \Bigl(T\frac{d}{dT}\Bigr)^{k}
\sum^{\infty}_{Y=0}P_n(Y+1+\check{n})T^{Y}
&=\Bigl(t(t-1)\frac{d}{dt}\Bigr)^k\Biggl(\sum^{n-1}_{l=1}t^{l}+2t^{n}\Biggl)
=(-1)^k\Biggl(\sum^{n-1}_{l=1}t^l+\sum^{k}_{l=0}c_n^{k}(l)t^{n+l}\Biggr).
\end{align*}
 Substituting this into the above expression with $k=p+q$, we have 
\begin{align}
\label{for:B2}
 \sum^{\infty}_{Y=0}B_{n,r}(s,Y)T^Y
&=-\sum^{r-1}_{p=0}\sum^{r-1}_{q=0}\binom{r-1}{p}\binom{r-1}{q}
 \xi^{p,q}_{n,r}(s)(-1)^{p+q}\sum^{n-1}_{l=1}t^l \\
&\ \ \ -\sum^{r-1}_{p=0}\sum^{r-1}_{q=0}\binom{r-1}{p}\binom{r-1}{q}
 \xi^{p,q}_{n,r}(s)(-1)^{p+q}\sum^{p+q}_{l=0}c_n^{p+q}(l)t^{n+l} \nonumber\\
&=-\sum^{n-1}_{l=1}(\check{n}^2-s^2)^{r-1}t^l \nonumber\\  
&\ \ \ -\sum^{n+2r-2}_{l=n}\Biggl(\sum^{2r-2}_{k=l-n}(-1)^kc^{k}_{n}(l-n)
\sum_{0\le p,q\le r-1 \atop
 p+q=k}\binom{r-1}{p}\binom{r-1}{q}\xi^{p,q}_{n,r}(s)\Biggr)t^{l}.
 \nonumber 
\end{align}
 Now, comparing the coefficient of $t^{l}$ in \eqref{for:B1} and \eqref{for:B2},
 one obtains the desired expression.
 In particular, the formulas \eqref{for:bn} and \eqref{for:bn+2r-2} are obtained
 from \eqref{for:br} together with facts $c^{k}_{n}(0)=1+n^{k}$ and
 $c^{k}_{n}(k)=(-1)^k2(n)_k$, respectively.
 This completes the proof. 
\end{proof}

\begin{example}
 From the equation \eqref{for:br}, 
 we have 
\begin{align}
\label{for:b1}
 \b_{n,1}(s,l)&=
\begin{cases}
 -1 & (1\le l\le n-1), \\
 -2 & (l=n);
\end{cases}\\[3pt]
\label{for:b2}
 \b_{n,2}(s,l)&=
\begin{cases}
 -(\check{n}^2-s^2) & (1\le l\le n-1), \\
 -2(\check{n}^2-s^2) & (l=n), \\
 n(n+1) & (l=n+1), \\
 -2n(n+1) & (l=n+2);
\end{cases}\\[3pt]
\label{for:b3}
 \b_{n,3}(s,l)&=
\begin{cases}
 -(\check{n}^2-s^2)^2 & (1\le l\le n-1), \\
 -2(\check{n}^2-s^2)^2 & (l=n), \\
 -2n(n+1)s^2+\frac{1}{2}n(n+1)(n^2+1) & (l=n+1), \\
 4n(n+1)s^2-n(n+1)(2n^2+5n+7) & (l=n+2), \\
 3n(n+1)(n+2)(n+3) & (l=n+3), \\
 -2n(n+1)(n+2)(n+3) & (l=n+4). \\
\end{cases}
\end{align}
\end{example}

\begin{remark}
 Note that $c^{k}_n(l)$ has the following closed expression:   
\begin{equation}
\label{for:closedc}
 c^k_{n}(l)=1+\binom{n+l-1}{l}\sum^{l}_{j=0}(-1)^j\binom{l}{j}\frac{n+2j-1}{n+j-1}(n+j)^k.
\end{equation}
 In fact, it is easy to check that the righthand-side of \eqref{for:closedc}
 satisfies the same difference equation \eqref{def:c} with the initial condition.
 (To see this, additionally, one has to show that $c^k_{n}(l)=0$ for $l>k$.)
\end{remark}

\section{The case $r=1$}
\label{sec:r=1}

 In this section, as corollaries of Theorem~\ref{thm:main}, 
 we give explicit expressions of 
 the usual determinant of the Laplacian $\Delta_n$
 and conformal Laplacian $Y_n$ in terms of the derivatives of the Riemann zeta function.

\begin{cor}
\label{cor:r=1}
 $\mathrm{(i)}$ It holds that  
\begin{align}
\label{for:detDn}
 \det\bigl(\Delta_n\bigr)
&=e^{f_{n,1}(\che{n})}
\prod^{n-1}_{l=1}
\bigl(\G_l(n)\G_l(1)\bigr)^{-1}\cdot \bigl(\G_n(n)\G_n(1)\bigr)^{-2}\\
\label{for:detDnexplicit}
&=
\begin{cases}
 \DS{4\pi^2} & (n=1), \\
 \DS{\frac{1}{n-1}e^{f_{n,1}(\check{n})+\sum^{n-1}_{k=0}d_n(k)\z'(-k)}} & (n\ge 2).
\end{cases}
\end{align}
 Here, for $0\le k\le n-1$, $d_n(k)$ is the rational number defined by the generating function
\begin{equation}
\label{def:genedkn}
 \sum^{n-1}_{k=0}d_n(k)z^k
=-\binom{z+n-2}{n-1}\frac{n+2z-1}{z}+(-1)^n\binom{-z+n-2}{n-1}\frac{n-2z-1}{-z}.
\end{equation}
 In particular, $d_n(k)=0$ if $k$ and $n$ have the same parity.
 
 $\mathrm{(ii)}$ It holds that 
\begin{align}
\label{for:detYn}
 \det\bigl(Y_n\bigr)
&=\bigl(\frac{n(n-2)}{4}\bigr)^{\d_{n\ge 3}}\cdot e^{f_{n,1}(\frac{1}{2})}
\prod^{n-1}_{l=1}
\Bigl(\G_l\bigl(\frac{n}{2}+1\bigr)\G_l\bigl(\frac{n}{2}\bigr)\Bigr)^{-1}\cdot 
\Bigl(\G_n\bigl(\frac{n}{2}+1\bigr)\G_n\bigl(\frac{n}{2}\bigr)\Bigr)^{-2}\\
\label{for:detYnexplicit}
&=
\begin{cases}
 \DS{16}
 & (n=1),\\
 \DS{e^{\frac{1}{2}-4\z'(-1)}}
 & (n=2), \\
 \DS{2^{\frac{(-1)^{m+1}(2m-3)!!}{2^{2(m-1)}(2m)!!}}
e^{\sum^{2m}_{k=0 }(2^{-k}-1)y_{2m+1}(k)\z'(-k)}}
 & (n=2m+1, \ m\ge 1),\\
 \DS{
e^{f_{2m,1}(\frac{1}{2})
+\sum^{2m-1}_{k=0}y_{2m}(k)\z'(-k)}}
 & (n=2m, \ m\ge 2).
\end{cases}
\end{align}
 Here, for $0\le k\le n-1$, $y_n(k)$ is the rational number defined by the generating function
\begin{align}
\label{def:geneyknodd}
 \sum^{2m}_{k=0}y_{2m+1}(k)z^k
&=(-1)^{m}\binom{2m-2}{m-1}^{-1}\binom{z+m-\frac{3}{2}}{m-1}\binom{-z+m-\frac{3}{2}}{m-1}\\
&\ \ \
 +4(-1)^{m+1}\binom{2m}{m}^{-1}\binom{z+m-\frac{1}{2}}{m-1}\binom{-z+m-\frac{1}{2}}{m-1}\nonumber\\
\intertext{if $n=2m+1$, for $m\ge 1$ and}
\label{def:geneykneven}
\sum^{2m-1}_{k=0}y_{2m}(k)z^k
&=\frac{(-1)^{m}(2m-2)}{z}\binom{2m-2}{m-1}^{-1}\binom{z+m-2}{m-1}\binom{-z+m-2}{m-1}\\
&\ \ \ +4\frac{(-1)^{m+1}2m}{z}\binom{2m}{m}^{-1}\binom{z+m-1}{m-1}\binom{-z+m-1}{m-1}\nonumber
\end{align}
 if $n=2m$, for $m\ge 2$. 
 In particular, $y_n(k)=0$ if $k$ and $n$ have the same parity.
\end{cor}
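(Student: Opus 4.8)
The plan is to deduce \eqref{for:detDn} and \eqref{for:detYn} from Theorem~\ref{thm:main} by specialization, and then to rewrite the resulting products of Barnes multiple gamma functions in terms of derivatives of the Riemann zeta function by means of a single zeta-function identity. First I set $r=1$ in \eqref{for:DetrB}. Since $\xi^{\pm}_n(\check n)=1+\check n\pm\check n$ equal $n$ and $1$, since $\check n^2-\check n^2=0$ and $\d_{|\check n|<\check n}=0$, and since $\b_{n,1}(s,l)=-1$ for $1\le l\le n-1$ and $\b_{n,1}(s,n)=-2$ by \eqref{for:b1}, formula \eqref{for:DetrB} at $s=\check n$ is precisely \eqref{for:detDn}; taking instead $s=\tfrac12$, and using $\xi^{\pm}_n(\tfrac12)=\tfrac n2+1,\tfrac n2$, $\check n^2-\tfrac14=\tfrac{n(n-2)}4$ and $\d_{|1/2|<\check n}=\d_{n\ge3}$, gives \eqref{for:detYn}. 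This step is immediate.

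The engine of the explicit formulas is the identity, valid for $\Re(w)$ large,
\[
 \sum^{n}_{l=1}\b_{n,1}(s,l)\,\z_l(w,z)=-\sum^{\infty}_{k=1}m_{n,k}\,(k-1+z)^{-w},
\]
which follows from $\z_l(w,z)=\sum_{m\ge0}\binom{m+l-1}{l-1}(m+z)^{-w}$ (see \eqref{for:Barneszeta}), from Lemma~\ref{lem:bpm} in the form $B_{n,1}(s,Y)=-P_n(Y+1+\check n)$, and from $P_n(m+1+\check n)=\binom{m+n+1}{n}-\binom{m+n-1}{n}=m_{n,m+1}$. Differentiating in $w$ at $w=0$, using $\log\G_l(z)=\tfrac{\p}{\p w}\z_l(w,z)|_{w=0}$ and the values of $\b_{n,1}(s,l)$ (the right side being regular at $w=0$, as the poles of $\z(\,\cdot\,,z)$ sit at argument $1$), yields
\[
 \log\Bigl(\prod^{n-1}_{l=1}\G_l(z)\cdot\G_n(z)^2\Bigr)=\frac{\p}{\p w}\Bigl(\sum^{\infty}_{k=1}m_{n,k}(k-1+z)^{-w}\Bigr)\Big|_{w=0}.
\]
Applying this to $\det(\Delta_n)$ with $z=1$ and with $z=n$: writing $m_{n,k}=\sum_i a_ik^i$ as a polynomial in $k$, the $z=1$ contribution is $\sum_i a_i\z'(-i)$; for $z=n$ one shifts the summation index by $n-1$, uses the reflection $m_{n,1-n-k}=(-1)^{n+1}m_{n,k}$ (a consequence of $P_n(-z)=(-1)^{n+1}P_n(z)$ and $m_{n,k}=P_n(k+\check n)$) to see that the main part contributes $\sum_i(-1)^{n+1+i}a_i\z'(-i)$, and uses $m_{n,k}=0$ for $k=-1,\dots,2-n$ together with $m_{n,0}=1$ to see that the only surviving $\log$ in the finite correction is $\log(n-1)$, i.e.\ the prefactor $\tfrac1{n-1}$. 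Hence $d_n(k)=-(1+(-1)^{n+1+k})a_k$, so $d_n(k)=0$ when $k\equiv n\pmod2$, and \eqref{def:genedkn} comes from the closed form $\sum_i a_iz^i=m_{n,\cdot}(z)=\binom{z+n}{n}-\binom{z+n-2}{n}=\binom{z+n-2}{n-1}\tfrac{n+2z-1}{z}$, read off from $m_{n,k}=\tfrac1{n!}((k+1)_n-(k-1)_n)$. The case $n=1$ is handled directly from $\z_{\Delta_1}(w)=2\z(2w)$ via the Lerch formula, giving $\det(\Delta_1)=4\pi^2$; this proves part $(\mathrm{i})$.

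For $\det(Y_n)$ with $n\ge3$ one applies the same identity with $z=\tfrac n2+1$ and $z=\tfrac n2$, handling the two shifted sums separately. When $n=2m$ the arguments $k+m,\,k+m-1$ are integers: the two sums produce $\sum_i a^{(m)}_i\z'(-i)$ and $\sum_i a^{(m-1)}_i\z'(-i)$, where $\sum_i a^{(j)}_iz^i=m_{n,\cdot}(z-j)$, plus finite corrections $\log m$ and $\log(m-1)$ that cancel the prefactor $(\tfrac{n(n-2)}4)^{\d_{n\ge3}}=m(m-1)$, so $\sum_k y_{2m}(k)z^k=-(m_{n,\cdot}(z-m)+m_{n,\cdot}(z-m+1))$, from which \eqref{def:geneykneven} follows after rearranging binomials. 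When $n=2m+1$ the arguments $k+m\pm\tfrac12$ are half-integers: after shifting, each sum is evaluated through $\z(w-i,\tfrac12)=(2^{w-i}-1)\z(w-i)$, whose $w$-derivative at $0$ splits into $2^{-i}\log2\cdot\z(-i)$ — a rational multiple of $\log2$ since $\z(-i)=-B_{i+1}/(i+1)$, accounting for the prefactor $2^{\frac{(-1)^{m+1}(2m-3)!!}{2^{2(m-1)}(2m)!!}}$ — and $(2^{-i}-1)\z'(-i)$, accounting for the exponent $\sum_k(2^{-k}-1)y_{2m+1}(k)\z'(-k)$; the finite corrections $\log(m\pm\tfrac12)$ cancel $(\tfrac{n(n-2)}4)^{\d_{n\ge3}}=m^2-\tfrac14$, and $f_{n,1}\equiv0$ for odd $n$ by Lemma~\ref{lem:f}, so $\sum_k y_{2m+1}(k)z^k=-(m_{n,\cdot}(z-m-\tfrac12)+m_{n,\cdot}(z-m+\tfrac12))$, whence \eqref{def:geneyknodd} and the parity statement $y_n(k)=0$ for $k\equiv n\pmod2$. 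The small cases $n=1,2$ (note $Y_2=\Delta_2$, since $\tfrac{n(n-2)}4=0$) are checked directly, completing part $(\mathrm{ii})$. The main difficulty is purely bookkeeping: tracking exactly which $\log j$, $\log(j+\tfrac12)$ and $\log2$ terms survive — which rests on the precise vanishing and reflection properties of the polynomial $m_{n,\cdot}$ — and reducing the resulting rational polynomials to the compact generating functions \eqref{def:genedkn}, \eqref{def:geneyknodd}, \eqref{def:geneykneven}.
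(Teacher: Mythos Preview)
Your argument is correct, and it is genuinely different from the paper's proof.  Both start the same way---specializing Theorem~\ref{thm:main} with $r=1$ and plugging in \eqref{for:b1} to get \eqref{for:detDn} and \eqref{for:detYn}---but then diverge.  The paper proceeds \emph{combinatorially via the ladder relation}: it uses \eqref{for:ladder} and \eqref{for:ladderformula} to push every factor $\Gamma_l(n)$ (resp.\ $\Gamma_l(\tfrac n2),\Gamma_l(\tfrac n2+1)$) down to $\Gamma_l(1)$ (resp.\ $\Gamma_l(1)$ or $\Gamma_l(\tfrac12)$), collecting the resulting exponents into auxiliary integers $p_n(l)$, $q_n(l)$; then it applies the explicit formulas \eqref{for:G1}, \eqref{for:G1/2} and sums the binomial weights $b_{l,k}$ against the $p_n(l)$, $q_n(l)$ by Vandermonde-type identities to obtain the closed generating functions \eqref{def:genedkn}--\eqref{def:geneykneven}.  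You instead work \emph{directly at the level of Dirichlet series}: by Lemma~\ref{lem:bpm} the sum $\sum_l\beta_{n,1}\zeta_l(w,z)$ collapses to a single shifted spectral sum $-\sum_{k\ge1}m_{n,k}(k-1+z)^{-w}$, and the two arguments $z$ are then handled by the reflection $m_{n,1-n-k}=(-1)^{n+1}m_{n,k}$ (from $P_n(-z)=(-1)^{n+1}P_n(z)$) together with the vanishing $m_{n,k}=0$ for $2-n\le k\le -1$, which pins down the stray logarithms exactly.  What your route buys is conceptual transparency---the generating polynomial for $d_n(k)$ is visibly $-\,m_{n,\cdot}(z)-(-1)^{n+1}m_{n,\cdot}(-z)$, and the parity statement is immediate---at the cost of never exhibiting the intermediate exponents $p_n(l),q_n(l)$; the paper's route is more algorithmic and makes the reduction to $\Gamma_l(1),\Gamma_l(\tfrac12)$ explicit.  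Your sketch of part~$(\mathrm{ii})$ (``after rearranging binomials'', and the exact power of~$2$) is terse but sound: the required identities are the finite evaluations $m_{n,\cdot}(z-\lfloor n/2\rfloor)+m_{n,\cdot}(z-\lceil n/2\rceil)$ and, in the odd case, the value $-\sum_k 2^{-k}\zeta(-k)\,y_{2m+1}(k)=-\tfrac12\,y_{2m+1}(0)$, both of which are straightforward once one has your intermediate expression.
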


\begin{remark}
 As we have seen in \eqref{for:vardi},
 it is well known that
 (the logarithm of) the determinant $\det(\Delta_n)$
 can be essentially written as
 a linear combination of the derivatives of the Riemann zeta function.
 However, there are few papers in which the coefficients are explicitly obtained.
 In this sense, the expression \eqref{for:detDnexplicit} is meaningful
 because one can explicitly calculate the coefficients $d_{n}(k)$ from \eqref{def:genedkn}. 
\end{remark}

\begin{proof}
[Proof of Corollary~\ref{cor:r=1}]
 The expressions \eqref{for:detDn} and \eqref{for:detYn} follow immediately
 from \eqref{for:DetrDn} and \eqref{for:DetrYn}, respectively, 
 together with the formula \eqref{for:b1}.
 We first derive the expression \eqref{for:detDnexplicit}.
 The case $n=1$ is clear: 
\[
 \det(\Delta_1)=\G_1(1)^{-4}=\bigl(\sqrt{2\pi}\bigr)^4=4\pi^2.
\] 
 Now suppose $n\ge 2$.
 Then, using the ladder relation \eqref{for:ladder} and the formula \eqref{for:ladderformula},
 one can see that 
\[
 \det\bigl(\Delta_n\bigr)
=\frac{1}{n-1}e^{f_{n,1}(\che{n})}\prod^{n}_{l=1}\G_l(1)^{p_n(l)},
\]
 where, for $1\le l\le n$, $p_{n}(l)$ is defined by 
\begin{equation*}
 p_{n}(l):=
\begin{cases}
 \DS{-1-(-1)^{n-l}\Biggl(2\binom{n-2}{l-2}+\binom{n-2}{l-1}\Biggr)} & (1\le l\le n-1),\\
 -4 & (l=n).
\end{cases}
\end{equation*}
 Hence, from \eqref{for:G1}, changing the order of the products, we have
\[
 \det\bigl(\Delta_n\bigr)
=\frac{1}{n-1}e^{f_{n,1}(\che{n})+\sum^{n-1}_{k=0}d_n(k)\z'(-k)},
\]
 where $d_n(k):=\sum^{n}_{l=k+1}b_{l,k}(1)p_n(l)$.
 Put $\cD_n(z):=\sum^{n}_{k=0}d_n(k)z^k$.
 It is enough to show that
 $\cD_n(z)$ coincides with the righthand-side of \eqref{def:genedkn}
 (if true, the assertion $d_n(k)=0$ if $k\equiv n$ $(\mathrm{mod}\,2)$
 automatically follows from the fact $\cD_n(z)=(-1)^{n-1}\cD_n(-z)$).
 From \eqref{def:b}, we have $\sum^{l-1}_{k=0}b_{l,k}(1)z^k=\binom{z+l-2}{l-1}$.
 Hence, changing the order of the summations, 
 we have $\cD_n(z)=\sum^{n}_{l=1}\binom{z+l-2}{l-1}p_n(l)$.
 Therefore, one obtains
\begin{align*}
 \cD_n(z)
&=-\sum^{n-1}_{l=1}\binom{z+l-2}{l-1}-2(-1)^n\sum^{n-1}_{l=1}\binom{z+l-2}{l-1}\binom{n-2}{l-2}(-1)^l\\
&\ \ \ 
-(-1)^n\sum^{n-1}_{l=1}\binom{z+l-2}{l-1}\binom{n-2}{l-1}(-1)^l-4\binom{z+n-2}{n-1}\\
&=-\frac{n-1}{z}\binom{z+n-2}{n-1}-2(-1)^n
\Biggl(-\binom{-z+n-2}{n-1}-(-1)^n\binom{z+n-2}{n-1}\Biggr)\\
&\ \ \ +(-1)^n\frac{n-1}{-z}\binom{-z+n-2}{n-1}-4\binom{z+n-2}{n-1}\\
&=-\binom{z+n-2}{n-1}\frac{n+2z-1}{z}+(-1)^n\binom{-z+n-2}{n-1}\frac{n-2z-1}{-z}.
\end{align*}
 This shows the desired claim.

 The expression \eqref{for:detYnexplicit} is similarly obtained.
 In fact, the case $n=1$ and $n=2$ are respectively obtained as
\begin{align*}
 \det(Y_1)
&=\Bigl(\frac{1}{4}\G_1\bigl(\frac{1}{2}\bigr)^4\Bigr)^{-1}
=\Bigl(\frac{1}{4}\bigl(\frac{1}{\sqrt{2}}\bigr)^4\Bigr)^{-1}
=16,\\
 \det(Y_2)
&=e^{f_{2,1}(\frac{1}{2})}\bigl(\G_2(1)\bigr)^{-4}
=e^{\frac{1}{2}}(e^{\z'(-1)})^{-4}
=e^{\frac{1}{2}-4\z'(-1)}.
\end{align*} 
 We notice that, from the definition, $\det(Y_2)=\det(\Delta_2)$.
 Now assume $n\ge 3$.
 Let us write $n=2m+1$ if $n$ is odd and $n=2m$ otherwise.
 Using \eqref{for:ladder} and \eqref{for:ladderformula} again,
 we have 
\begin{align*}
 \det\bigl(Y_{2m+1}\bigr)
&=\prod^{2m+1}_{l=1}\G_l\bigl(\frac{1}{2}\bigr)^{q_{2m+1}(l)},\\
 \det\bigl(Y_{2m}\bigr)
&=e^{f_{2m,1}(\frac{1}{2})}\prod^{2m}_{l=1}\G_l(1)^{q_{2m}(l)},
\end{align*}
 where, for $1\le l\le n$, $q_{n}(l)$ is defined by 
\begin{align*}
 q_{2m+1}(l)
&:=(-1)^l\Biggl(\binom{m-1}{l-m}+4\binom{m}{l-m-1}\Biggr),\\
 q_{2m}(l)
&:=(-1)^{l+1}\Biggl(\binom{m-2}{l-m}+4\binom{m-1}{l-m-1}\Biggr).
\end{align*}
 We notice that, from Lemma~\ref{lem:f}, $f_{2m+1,1}(\frac{1}{2})=0$.
 Moreover, from \eqref{for:G1/2} and \eqref{for:G1}, 
 changing the order of the products, 
 we have 
\begin{align*}
 \det\bigl(Y_{2m+1}\bigr)
&=2^{-\sum^{2m}_{k=0}\frac{2^{-k}B_{k+1}}{k+1}y_{2m+1}(k)}
 e^{\sum^{2m}_{k=0}(2^{-k}-1)y_{2m+1}(k)\z'(-k)},\\
 \det\bigl(Y_{2m}\bigr)
&=e^{f_{2m,1}(\frac{1}{2})+\sum^{2m-1}_{k=0}y_{2m}(k)\z'(-k)},
\end{align*} 
 where,
 for $0\le k\le 2m$,
 $y_{2m+1}(k):=\sum^{2m+1}_{l=k+1}b_{l,k}(\frac{1}{2})q_{2m+1}(l)$ and,
 for $0\le k\le 2m-1$,
 $y_{2m}(k):=\sum^{2m}_{l=k+1}b_{l,k}(1)q_{2m}(l)$, respectively.
 Let $\cY_{n}(z):=\sum^{n-1}_{k=0}y_{n}(k)z^k$.
 We claim that $\cY_{2m+1}(z)$ (resp. $\cY_{2m}(z)$) is equal to 
 the righthand-side of \eqref{def:geneyknodd} (resp. \eqref{def:geneykneven}). 
 We here prove only the case $n=2m+1$ (the other case is similar).
 From \eqref{def:b}, it holds that  
 $\sum^{l-1}_{k=0}b_{l,k}(\frac{1}{2})z^k=\binom{z+l-\frac{3}{2}}{l-1}$.
 Therefore, changing the order of the summations, 
 we have 
\begin{align*}
 \cY_{2m+1}(z)
&=\sum^{2m-1}_{l=m}\binom{z+l-\frac{3}{2}}{l-1}\binom{m-1}{l-m}(-1)^l
+4\sum^{2m+1}_{l=m+1}\binom{z+l-\frac{3}{2}}{l-1}\binom{m}{l-m-1}(-1)^l\\
&=(-1)^{m}\binom{2m-2}{m-1}^{-1}\binom{z+m-\frac{3}{2}}{m-1}\binom{-z+m-\frac{3}{2}}{m-1}\\
&\ \ \ +4(-1)^{m+1}\binom{2m}{m}^{-1}\binom{z+m-\frac{1}{2}}{m}\binom{-z+m-\frac{1}{2}}{m},
\end{align*}
 whence one obtains the claim.
 Finally, since $y_{2m+1}(z)=0$ if $k$ is odd and
 $B_{k+1}=0$ if $k$ is even for $k\ge 2$, 
 we have 
\begin{align*}
 -\sum^{2m}_{k=0}\frac{2^{-k}B_{k+1}}{k+1}y_{2m+1}(k)
=-B_1\cdot y_{2m+1}(0)=-\frac{1}{2}\cdot \cY_{2m+1}(0)
=\frac{(-1)^{m+1}(2m-3)!!}{2^{2(m-1)}(2m)!!}.
\end{align*}
 This completes the proofs.
\end{proof}

\begin{example}
 From \eqref{for:detDnexplicit}, we have 
\begin{align*}
 \det\bigl(\Delta_1\bigr)
&=4\pi^2
=39.47841760\ldots,\\
 \det\bigl(\Delta_2\bigr)
&=e^{\frac{1}{2}-4\z'(-1)}
=3.195311486\ldots,\\
 \det\bigl(\Delta_3\bigr)
&=\frac{1}{2}e^{-2\z'(0)-2\z'(-2)}
=3.338851214\ldots,\\
 \det\bigl(\Delta_4\bigr)
&=\frac{1}{3}e^{\frac{15}{16}-\frac{13}{3}\z'(-1)-\frac{2}{3}\z'(-3)}
=1.736943483\ldots,\\
 \det\bigl(\Delta_5\bigr)
&=\frac{1}{4}e^{-2\z'(0)-\frac{23}{6}\z'(-2)-\frac{1}{6}\z'(0)}
=1.762919348\ldots,\\
 \det\bigl(\Delta_6\bigr)
&=\frac{1}{5}e^{\frac{455}{432}-\frac{149}{30}\z'(-1)-2\z'(-3)-\frac{1}{30}\z'(-5)}
=1.290018366\ldots.
\end{align*}
 We notice that it is shown in \cite{Moller2009} that
 $\lim_{m\to\infty}\det(\Delta_{2m+1})=0$. 
\end{example}

\begin{example}
 From \eqref{for:detYnexplicit}, we have 
\begin{align*}
 \det\bigl(Y_1\bigr)
&=16,\\
 \det\bigl(Y_2\bigr)
&=e^{\frac{1}{2}-4\z'(-1)}
=3.195311486\ldots,\\
 \det\bigl(Y_3\bigr)
&=2^{\frac{1}{4}}e^{-2\z'(-2)}
=1.136114502\ldots,\\
 \det\bigl(Y_4\bigr)
&=e^{-\frac{1}{144}-\frac{1}{3}\z'(-1)-\frac{2}{3}\z'(-3)}
=1.045620218\ldots,\\
 \det\bigl(Y_5\bigr)
&=2^{-\frac{1}{64}}e^{\frac{1}{16}\z'(-2)+\frac{5}{32}\z'(-4)}
=0.9885797293\ldots,\\
 \det\bigl(Y_6\bigr)
&=e^{\frac{1}{1350}+\frac{1}{30}\z'(-1)-\frac{1}{30}\z'(-5)}
=0.9952570855\ldots.
\end{align*}
 We also notice that it is shown in \cite{Moller2009} that
 $\lim_{n\to\infty}\det(Y_n)=1$. 
\end{example}




\bigskip

\noindent
\textsc{Yoshinori YAMASAKI}\\
 Graduate School of Science and Engineering, Ehime University.\\
 Bunkyo-cho, Matsuyama 790-8577 JAPAN.\\
 \texttt{yamasaki@math.sci.ehime-u.ac.jp}\\
 Tel\,:\,+81-(0)89-927-9554 \\
 Fax\,:\,+81-(0)89-927-9560

\end{document}